\newtheoremstyle{definition}% name of the style to be used
{5pt}% measure of space to leave above the theorem. E.g.: 3pt
{3pt}% measure of space to leave below the theorem. E.g.: 3pt
{}% name of font to use in the body of the theorem
{0pt}% measure of space to indent
{\scshape}% name of head font
{.}% punctuation between head and body
{5pt}% space after theorem head; " " = normal interword space
{\thmname{#1} \thmnumber{#2} \thmnote{[#3]}} % Manually specify head
\newtheoremstyle{theorems}% name of the style to be used
{5pt}% measure of space to leave above the theorem. E.g.: 3pt
{3pt}% measure of space to leave below the theorem. E.g.: 3pt
{\itshape}% name of font to use in the body of the theorem
{0pt}% measure of space to indent
{\scshape}% name of head font
{.}% punctuation between head and body
{5pt}% space after theorem head; " " = normal interword space
{\thmname{#1} \thmnumber{#2}\thmnote{[#3]}} % Manually specify head
\swapnumbers \theoremstyle{theorems}
\newtheorem{theorem}{Theorem}[section]
\newtheorem{lemma}[theorem]{Lemma}
\newtheorem{definition-theorem}[theorem]{Definition-Theorem}
\newtheorem{proposition}[theorem]{Proposition}
\newtheorem{corollary}[theorem]{Corollary}
\newtheorem{definition}[theorem]{Definition}
\newtheorem{remark}[theorem]{Remark}
\newtheorem{notation}[theorem]{Notation}
\newtheorem{notation-remark}[theorem]{Notation-Remarks}
\newtheorem{conjecture}[theorem]{Conjecture}
\begin{document}

\title[On the monotonicity of the generalized Markov numbers]{\sc On the monotonicity of the generalized Markov numbers}

%\author[Arkady Berenstein]{Arkady Berenstein \hspace{-3pt}}

\author[Min Huang]{Min Huang \hspace{-3pt}}

%\author[Vladimir Retakh]{Vladimir Retakh}

\keywords{Markov number, monotonic}

\subjclass[2010]{}

%\thanks{XXXXXXXXXXXXXXXXXXXXXXXXX}

%\address{Arkady Berenstein \\ Department of Mathematics, University of Oregon, Eugene, OR 97403, USA}
%\email{arkadiy@math.uoregon.edu}

\address{Min Huang \\ School of Mathematics (Zhuhai), Sun Yat-sen University, Zhuhai, China.}
\email{huangm97@mail.sysu.edu.cn}

%\address{Vladimir Retakh \\ Department of Mathematics, Rutgers University, Piscataway, NJ 08854, USA}
%\email{vretakh@math.rutgers.edu}

\maketitle

\begin{abstract}
Using the Markov distance and Ptolemy inequality introduced by Lee-Li-Rabideau-Schiffler \cite{LLRS}, we completely determine the monotonicity of the generalized Markov numbers along the lines of a given slope.
\end{abstract}

\medskip

 \tableofcontents

\section{Introduction}

A \emph{Markov number} is any number in the triple $(x,y,z)$ of positive integer solutions to the Diophantine equation
$$x^2+y^2+z^2=3xyz,$$ known as the \emph{Markov equation}.

Every Markov number appears as the maximum of some Markov triple, The \emph{Markov Uniqueness Conjecture} by Frobenius from 1913 asserts that each Markov number appears as the maximum of a unique Markov triple \cite{F,A,LLRS,J,RS}.

As an approach to studying the Uniqueness Conjecture, Aigner \cite{A} proposed three conjectures, called fixed numerator, fixed denominator, and fixed sum conjectures, which say that the Markov numbers increase along the lines $y$-axis, $x$-axis, and $y=x$, respectively.

Propp \cite{P} and Beineke-Br\"{u}stle-Hille \cite{BBH} found that the Markov numbers are the specialized cluster variables of the once-punctured torus cluster algebras. The family of cluster algebras from surfaces, introduced by Fomin-Shapiro-Thurston \cite{FST} is a class of important and special cluster algebras. The cluster variables can be computed in terms of the perfect matching of certain snake graphs \cite{MS,MSW,H2}, see also for the quantum case in \cite{CL,H1,H3}. In \cite{MSW1}, for a given cluster algebra $\mathcal A$ from a marked surface $(S,M)$, Musiker-Schiffler-Williams associate any generalized curve $\gamma$ on $(S,M)$ with an element $x_{\gamma}$.

To study the ordering of the Markov numbers and the Uniqueness Conjecture, Lee-Li-Rabideau-Schiffler \cite{LLRS} introduced the Markov distance on the plane.
By comparing the Markov tree and the Farey tree, one sees that the Markov numbers can be indexed by the rational numbers between zero and one, equivalently, the set $\{(q,p)\in \mathbb Z_{>0}^2\mid p<q, g.c.d.(p,q)=1\}$, see \cite{A}.
Using the Markov distance, Lee-Li-Rabideau-Schiffler extended the Markov numbers to the numbers indexed by all $(q,p)\in \mathbb Z_{>0}^2$ with $p<q$, we call them \emph{generalized Markov numbers} in this paper. They show that the Markov numbers increase and decrease along the line with some special slopes. They also show that there are some lines such that the Markov numbers are not monotonic along these lines.

Using hyperbolic geometry, Gaster provides the boundary slopes for which the Markov numbers decrease and increase, respectively. In this paper, we consider the monotonicity of generalized Markov numbers and give a parallel result to that of Gaster.

Our main result is the following, which was conjectured by Lee-Li-Rabideau-Schiffler \cite{LLRS} \footnote{Thanks to the authors of \cite{LLRS} for sharing their revised manuscript in private communication. See also \cite{J}.}.

\begin{theorem}(Theorem \ref{thm-mono}, Proposition \ref{prop-mon} (2))

\begin{enumerate}[$(1)$]
  \item For $k\in \mathbb Q$ with $k\geq -\frac{ln\frac{3(2+\sqrt 2)}{4}}{ln\frac{2(2+\sqrt 2)}{3}}\approx -1.1432$, the generalized Markov numbers increase with $x$ along any line $l: y=kx+b$;
  \item For $k\in \mathbb Q$ with $k\leq -\frac{2ln(\frac{1+\sqrt{5}}{2})}{ln(\frac{3(1+\sqrt{5})}{2\sqrt{5}})}\approx -1.2417$, the generalized Markov numbers decrease with $x$ along any line $y=kx+b$;
  \item For any $k\in \mathbb Q$ with $-\frac{2ln(\frac{1+\sqrt{5}}{2})}{ln(\frac{3(1+\sqrt{5})}{2\sqrt{5}})}<k<-\frac{ln\frac{3(2+\sqrt 2)}{4}}{ln\frac{2(2+\sqrt 2)}{3}}$, then for almost all $b\in \mathbb Q$, the generalized Markov numbers are not monotonic along the line $y=kx+b$.
  \item For the lines along which the generalized Markov numbers are not monotonic, when the $x$-coordinate increases the generalized Markov numbers first decrease and then increase.
  \item the generalized Markov numbers are not monotonic along the line $l$ if and only if $r_l(u_l,v_l)<1$ and $r_l(z_l,w_l)> 1$.
\end{enumerate}

\end{theorem}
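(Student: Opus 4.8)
\emph{The plan is to} reduce the whole statement to two ingredients: a qualitative \emph{convexity} of the Markov numbers along any line, which handles (4) and (5) and reduces (1)--(3) to two boundary ratios, and a quantitative computation of the two threshold slopes. Write $m_{q,p}$ for the generalized Markov number indexed by $(q,p)$ and fix a line $l\colon y=kx+b$ with $k<0$. The feasible lattice points on $l$ (those with $0<p<q$) form a finite chain $P_0,P_1,\dots,P_N$ ordered by increasing $x$, running from the diagonal side $p\approx q$ to the axis side $p\approx 0$, and for consecutive points we set $r_l(P_i,P_{i+1})=m_{P_{i+1}}/m_{P_i}$. First I would establish that $\log m$ is \emph{discretely convex} along $l$, i.e.\ the sequence $r_l(P_i,P_{i+1})$ is nondecreasing in $i$. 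This is exactly where the Markov distance and the Ptolemy inequality of \cite{LLRS} enter: applied to the quadrilateral determined by $P_{i-1},P_i,P_{i+1}$ and a suitable reference arc, the Ptolemy inequality yields $m_{P_{i-1}}m_{P_{i+1}}\ge m_{P_i}^2$, which is precisely log-convexity.

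Granting convexity, parts (4) and (5) follow formally. A convex sequence is unimodal: its increments cross the value $1$ at most once and from below, so along $l$ the numbers either never decrease, never increase, or strictly decrease and then strictly increase, which is (4). Non-monotonicity is then equivalent to the first increment lying below $1$ and the last lying above $1$, i.e.\ $r_l(u_l,v_l)<1$ and $r_l(z_l,w_l)>1$ for the first pair $(u_l,v_l)=(P_0,P_1)$ and last pair $(z_l,w_l)=(P_{N-1},P_N)$, which is (5). Thus the regimes (1)--(3) are governed entirely by the two \emph{boundary increments}: $m$ increases along $l$ iff its smallest increment $r_l(u_l,v_l)\ge 1$, and decreases iff its largest increment $r_l(z_l,w_l)\le 1$.

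The core is the quantitative step: determining, as $k$ ranges over $\mathbb Q_{<0}$, exactly when the boundary increments can dip below or rise above $1$ uniformly in the intercept $b$. Here I would take the limit $b\to\infty$, in which the relevant boundary pair recedes to infinity along a fixed ray and $r_l$ converges to a \emph{renormalized} self-similar ratio attached to that direction. The worst cases are attained by the two purely periodic directions with continued fractions $[0;\overline 2]$ and $[0;\overline 1]$, namely the silver (Pell) direction associated with $2+\sqrt2$ and the golden (Fibonacci) direction associated with $\tfrac{1+\sqrt5}{2}$. Over one period such a direction contributes two multiplicative increments $A,B>1$ to the Markov distance, and the balance equation $r_l=1$ along a line of slope $k$ takes the form $A\,B^{\,k}=1$, so the critical slope is $k=-\log A/\log B$. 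For the silver direction $A=\tfrac{3(2+\sqrt2)}{4}$ and $B=\tfrac{2(2+\sqrt2)}{3}$ (with $AB=3+2\sqrt2=(1+\sqrt2)^2$), giving $k_1\approx-1.1432$; for the golden direction $A=\bigl(\tfrac{1+\sqrt5}{2}\bigr)^{2}$ and $B=\tfrac{3(1+\sqrt5)}{2\sqrt5}$, giving $k_2\approx-1.2417$, matching the displayed constants.

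With both thresholds in hand the regimes assemble as follows. For $k\ge k_1$ the silver direction is the last obstruction, so every line has $r_l(u_l,v_l)\ge1$ and convexity propagates this to all of $l$: the numbers increase, proving (1). For $k\le k_2$ the golden direction is the last obstruction in the opposite sense, every line has $r_l(z_l,w_l)\le1$, and the numbers decrease, proving (2). For $k_2<k<k_1$ both boundary increments straddle $1$ for all but finitely many intercepts, so by (5) the line is non-monotonic; the phrase ``almost all $b$'' absorbs the exceptional intercepts, namely those for which $l$ meets the feasible region in too few lattice points or passes through a boundary/symmetric configuration where an endpoint increment degenerates, proving (3). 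I expect the main obstacle to be this quantitative step, and specifically the proof that $[0;\overline 1]$ and $[0;\overline 2]$ are the genuine extremizers among all slopes: this requires a monotonicity estimate for the renormalized increment as a functional of the direction, which I would extract from a uniform form of the Ptolemy inequality showing that refining a direction by a Farey/continued-fraction subdivision moves the increment monotonically toward these two periodic extremes, after which the exact constants reduce to the period-two Markov-distance recursion in each direction.
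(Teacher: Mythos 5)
Your qualitative layer reproduces the paper's argument: the Ptolemy inequality gives log-convexity of the Markov numbers along any line not through the origin (Proposition \ref{prop-l1}), convexity gives unimodality and hence (4), (5), and the reduction of (1)--(3) to the first and last ratios $r_l(u_l,v_l)$, $r_l(z_l,w_l)$ (Proposition \ref{prop-mon}); your constants $A,B$ and the resulting thresholds agree with Propositions \ref{prop-lim2} and \ref{prop-lim1}. The genuine gap is in how you convert the \emph{limiting} ratios into statements about \emph{every} line, which is the actual content of (1) and (2). Knowing that $r_l(u_l,v_l)$ converges to a limit $\geq 1$ as the line recedes ($b\to\infty$) says nothing about a fixed line near the origin; one needs the convergence to be monotone from the correct side. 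The paper proves this with a second, different family of Ptolemy comparisons, now between two \emph{parallel} lines (Proposition \ref{prop-l2}, fed by Lemma \ref{lem-fl}): shifting a line along the diagonal strictly decreases its first ratio, and shifting it horizontally strictly increases its last ratio, so every first ratio is $\geq$ its limit and every last ratio is $\leq$ its limit. Log-convexity along a single line, which is all your proposal establishes, does not yield these cross-line inequalities, and without them your deduction "every line has $r_l(u_l,v_l)\ge 1$" is unsupported.

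Moreover, the machinery you propose for your self-identified main obstacle is aimed at a statement that is neither needed nor correct as a target. There is no extremization "among all slopes": for a fixed slope $k=-a_1/a_2$, consecutive lattice points on the line differ by $(a_2,-a_1)$, so the first two lattice points of any such line automatically lie within bounded distance of the diagonal $y=x$ and the last two lie at height at most $a_1$; the Pell and Fibonacci asymptotics are therefore forced by the geometry of $\{0<y<x\}$, not selected by a Farey/continued-fraction variational principle over directions. What is actually needed, besides Proposition \ref{prop-l2}, is (i) sandwich lemmas comparing an arbitrary line's boundary ratios with those of the special lines through $(n,1)$ and $(n,n-1)$ (Lemmas \ref{lem-r1}--\ref{lem-r4}), so that the limits need only be computed on those special lines, and (ii) the computation itself, which rests on the recursion $f_n=3f_1f_{n-1}-f_{n-2}$ (Lemma \ref{lem-fn}), i.e.\ $m_{qn,pn}\sim 3^{n-1}m_{q,p}^n$ (Proposition \ref{prop-app}): the receding boundary point is a non-primitive lattice point, and the factor $3^{n-1}$ is exactly where the $3$'s in your $A$ and $B$ come from; a one-dimensional "period-two recursion in each direction" alone does not produce it. Finally, in (3) the exceptional intercepts are not degenerate or symmetric configurations: they are the finitely many feasible $b\in\frac{1}{a_2}\mathbb Z$ for which the line is too close to the origin, where the first ratio can still be $\geq 1$ and the line genuinely monotonic increasing; the paper excludes them with the uniform Lemmas \ref{lem-r5} and \ref{lem-r6}, and any repair of your argument must handle this region explicitly (for instance orbit-by-orbit under the two shifts).
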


\begin{remark}
Theorem 1.1(1) solves \cite[Conjecture 6.11]{LLRS}, Theorem 1.1(2) solves \cite[Conjecture 6.8]{LLRS} and Theorem 1.1(3) (4) solve \cite[Conjecture 6.12]{LLRS}.
\end{remark}

Note that the Markov distance does not have triangle inequality, we find the following interesting inequality, as modified triangle inequality.

\begin{proposition}(Proposition \ref{cor-t1})
Let $(x,y), (x',y'),(x'',y'')\in \mathbb Z_{\geq 0}^2$ be three points with $x\geq y,x'\geq y'$ and $x''\geq y''$. If $(x',y')=\frac{(x,y)+(x'',y'')}{2}$ then
$$m_{x,y}+m_{x'',y''}\geq 2m_{x',y'}.$$
\end{proposition}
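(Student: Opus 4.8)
The plan is to deduce the claimed convexity from the Ptolemy inequality of \cite{LLRS}, after first recasting it in a multiplicative (log-convexity) form. Write $A=(x,y)$, $B=(x',y')$ and $C=(x'',y'')$; the hypothesis $(x',y')=\tfrac12\big((x,y)+(x'',y'')\big)$ says exactly that $B$ is the lattice midpoint of $A$ and $C$. The first step is the elementary reduction by the arithmetic--geometric mean inequality: since
\[
m_{x,y}+m_{x'',y''}\ \geq\ 2\sqrt{\,m_{x,y}\,m_{x'',y''}\,},
\]
it suffices to prove the log-convexity estimate $m_{x,y}\,m_{x'',y''}\geq m_{x',y'}^2$. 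The key structural observation that feeds into the Ptolemy inequality is that the midpoint condition translates into a mediant condition on slopes: the slope of $B$ is $\tfrac{y+y''}{x+x''}$, which is the Farey mediant of the slopes $\tfrac{y}{x}$ and $\tfrac{y''}{x''}$ of $A$ and $C$. Thus the three indices sit in a Stern--Brocot/mediant configuration, which is precisely the setting in which the Markov distance and the Ptolemy inequality of \cite{LLRS} operate.

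With this in hand, I would realize each $m_{x,y}$ as a Markov distance in the sense of \cite{LLRS} and apply the Ptolemy inequality to the quadruple of lines determined by $A$, $B$, $C$ together with the auxiliary base line of the distance, choosing the cyclic order so that the segment through the mediant $B$ plays the role of a diagonal. Relying on the translation invariance of the Markov distance, together with $B-A=C-B=\tfrac12(C-A)$, the two ``side'' contributions attached to $B$ are equal, so they can be collected and the common factor cancelled; what survives is exactly the inequality $m_{x,y}\,m_{x'',y''}\geq m_{x',y'}^2$. Combined with the arithmetic--geometric mean step above, this yields $m_{x,y}+m_{x'',y''}\geq 2m_{x',y'}$, as required. (If the Ptolemy inequality of \cite{LLRS} is already available directly in additive form for this mediant configuration, the arithmetic--geometric mean reduction is unnecessary and the estimate follows in one stroke.)

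The main obstacle is matching the present configuration to the exact hypotheses of the Ptolemy inequality of \cite{LLRS} and pinning down its direction. Concretely, I must verify that the three slopes $\tfrac{y}{x}$, $\tfrac{y+y''}{x+x''}$, $\tfrac{y''}{x''}$ are admitted by that inequality under the standing constraints $x\geq y$, $x'\geq y'$, $x''\geq y''$, that the inequality indeed bounds the product of the outer Markov numbers below by the square of the mediant one (rather than the reverse), and that the cancellation of the two side terms is legitimate given that the Markov distance has no genuine triangle inequality. Finally I would dispose of the degenerate cases separately: when a coordinate vanishes, when two of the points coincide, or when the slopes fail to be in strict Farey position, since there the Ptolemy configuration collapses and the inequality must be checked by hand or by continuity from the generic case.
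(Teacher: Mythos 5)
Your generic-case strategy is in fact the same as the paper's: the paper's Proposition \ref{prop-l1}(1) is exactly your ``Ptolemy plus translation invariance'' step (with $O'=B-A$, so the two side terms $|O'A_2|=|OA_1|$, $|O'A_3|=|OA_2|$ collapse), it yields the log-convexity $m_{x,y}\,m_{x'',y''}> m_{x',y'}^2$ along any line \emph{not through the origin}, and the paper then finishes with $t+\tfrac1t\geq 2$, which is your AM--GM step. The vertical case $x=x'=x''$ is handled identically via Lemma \ref{lem-v1}.

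The genuine gap is precisely the case you flag but defer to ``check by hand or by continuity'': when the three points are collinear \emph{with the origin}, i.e. $(x,y)=a(q,p)$, $(x'',y'')=c(q,p)$, $(x',y')=\tfrac{a+c}{2}(q,p)$ for a primitive vector $(q,p)$. There your central reduction is not merely unverified but \emph{false}: the crossing pattern in the skein relation reverses (one must compare the left deformations $\gamma^L_{OA_2}$ and $\gamma^L_{O'A_3}$), and Proposition \ref{prop-l1}(2) shows the ratios \emph{decrease} along such lines, so $m_{x,y}\,m_{x'',y''}< m_{x',y'}^2$. Concretely, $m_{1,0}\,m_{3,0}=\mathcal F_2\mathcal F_6=8<9=\mathcal F_4^2=m_{2,0}^2$, yet the additive inequality $1+8\geq 2\cdot 3$ still holds. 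This is an infinite family (every rational slope in $[0,1]$ through the origin), not a boundary degeneration, and no continuity argument can recover it since the strict reverse inequality persists. The paper closes this case by a different mechanism: by Lemma \ref{lem-fn} the consecutive ratio $r$ along such a line satisfies $\lim r+\tfrac{1}{\lim r}=3m_{q,p}\geq 3$, hence $\lim r>2$; since the ratios decrease to this limit, every ratio exceeds $2$, so already $m_{x'',y''}>2m_{x',y'}$ and the sum inequality follows with no convexity at all. Your proof needs this (or an equivalent) supplementary argument to be complete.
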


The structure of this paper is the following. Section \ref{sec-p} is preliminary, we review the Markov distance, generalized Markov numbers defined by Lee-Li-Rabideau-Schiffler, and some properties herein. We introduce the ratios between two generalized Markov numbers and study the monotonicity in Section \ref{sec-3}. We study the monotonicity of the generalized Markov numbers and give the proof of the main result in Section \ref{sec-4}.

\medskip

{\bf Convention:} (i)\; The points that appeared in the paper are always assumed to lie in the area $\{(x,y)\in \mathbb Z^{2}_{>0}\mid x>y\}$ unless otherwise
stated. (ii)\; When we consider the monotonicity of the generalized Markov numbers along a line $l$, we always assume that $l\cap \{(x,y)\in \mathbb Z^{2}_{>0}\mid x>y\}\neq \emptyset$.

\section{Preliminary}\label{sec-p}

In this section, we review the Markov distance, generalized Markov numbers defined in \cite{LLRS}, and some properties.

Let $\gamma$ be a curve connecting two  points but not passing through a third one. Musiker-Schiffler-Williams \cite{MSW1} associated $\gamma$ with an element $x_{\gamma}$ in the once-punctured torus cluster algebra. Denote $|\gamma|=x_{\gamma}|_{x_1,x_2,x_3}=1$ the positive integer obtained from $x_\gamma$ by specializing the initial cluster algebras $x_1,x_2$ and $x_3$ to $1$.

\begin{definition}\label{def}\cite{LLRS}
For any points $A,B\in \mathbb Z^2$, the \emph{Markov distance} $|AB|$ between $A$ and $B$ is defined as $|\gamma_{AB}^L|$, where $\gamma_{AB}^L$ is the left deformation of $AB$.
\end{definition}

By the skein relation \cite{MW} of cluster algebras from surfaces, the Markov distance has the following important property.

\begin{theorem}\cite[Corollary 3.6]{LLRS}
(Ptolemy inequality) Given four points $A,B,C,D$ in the plane such that the straight line segments $AB,BC,CD,DA$ form a convex quadrilateral with diagonals $AC$ and $BD$,
we have
$$|AC| |BD|\geq |AB| |CD| + |AD| |BC|.$$
\end{theorem}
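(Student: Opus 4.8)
The plan is to derive the inequality from the skein relation for cluster algebras from surfaces, applied to the crossing of the two diagonals, and then to specialize. First I would use that, since $AB,BC,CD,DA$ bound a convex quadrilateral, the diagonal segments $AC$ and $BD$ meet in a single interior point. Passing to the left deformations $\gamma_{AC}^L$ and $\gamma_{BD}^L$ that define $|AC|$ and $|BD|$ via Definition \ref{def}, these become two arcs on the once-punctured torus that cross transversally. The skein relation of Musiker-Williams \cite{MW} then expresses the product $x_{\gamma_{AC}^L}\, x_{\gamma_{BD}^L}$ of the associated cluster-algebra elements as a sum of exactly two terms, one for each of the two ways of smoothing the crossing.

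Next I would identify the two smoothings geometrically. Resolving the crossing of the diagonals one way reconnects the four endpoints into the pair $A$--$B$ and $C$--$D$; resolving it the other way reconnects them into the pair $A$--$D$ and $B$--$C$. Thus the two skein terms are products of cluster-algebra elements of curves joining the two pairs of opposite sides of the quadrilateral. Because each $|\gamma|=x_\gamma|_{x_1,x_2,x_3=1}$ is a positive integer, specializing $x_1=x_2=x_3=1$ turns the skein identity into a relation among positive integers
$$|AC|\,|BD| \;=\; |\sigma_1|\,|\sigma_1'| \;+\; |\sigma_2|\,|\sigma_2'|,$$
where $\sigma_1\cup\sigma_1'$ and $\sigma_2\cup\sigma_2'$ are the two smoothed pairs joining, respectively, $\{A,B\},\{C,D\}$ and $\{A,D\},\{B,C\}$.

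Then I would compare each smoothed curve with the left deformation of the corresponding side. The point is that the smoothing of two left-deformed arcs need not itself be a left deformation: it may acquire excess winding, a self-crossing, or a crossing with the puncture. Each such feature can be removed by a further skein resolution, which again produces a sum of terms with coefficients specializing to $1$, hence a sum of nonnegative integers; one of these terms is the left deformation defining the Markov distance of the side, and the remaining terms are nonnegative. This gives $|\sigma_1|\,|\sigma_1'| \geq |AB|\,|CD|$ and $|\sigma_2|\,|\sigma_2'| \geq |AD|\,|BC|$. Substituting into the displayed identity yields $|AC|\,|BD| \geq |AB|\,|CD| + |AD|\,|BC|$, as claimed.

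The main obstacle is precisely this third step: controlling the smoothed curves. One must verify that each smoothing genuinely reconnects the endpoints into the intended pairing (and does not, for instance, produce a contractible loop or an arc in the wrong homotopy class that would reverse the inequality), and one must confirm that the subsequent reduction to the \emph{left} deformation of each side — rather than the right deformation — moves the specialized value in the correct direction, so that the discarded terms are genuinely nonnegative. This careful bookkeeping of left-versus-right deformations and of the extra nonnegative skein contributions is exactly what upgrades the naive Ptolemy \emph{equality} into the stated \emph{inequality}.
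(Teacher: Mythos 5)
The paper itself contains no proof of this statement: it is imported verbatim from \cite[Corollary 3.6]{LLRS}, with only the one-line attribution that it follows from the skein relation of \cite{MW}. So the relevant comparison is with the proof in the cited source, and your outline does follow the same strategy as that proof: resolve the crossing of the left deformations $\gamma_{AC}^L$ and $\gamma_{BD}^L$ at the point corresponding to the intersection of the diagonals, observe that the two smoothings reconnect the endpoints as $\{A,B\},\{C,D\}$ and $\{A,D\},\{B,C\}$, and specialize $x_1=x_2=x_3=1$.

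However, your third step --- reducing each smoothed curve to the left deformation of the corresponding side --- rests on a principle that is false as stated, and this is a genuine gap rather than mere bookkeeping. You claim that any excess feature of a resolution (kink, self-crossing, extra winding, passage through the puncture) ``can be removed by a further skein resolution, which again produces a sum of terms with coefficients specializing to $1$, hence a sum of nonnegative integers.'' In the skein calculus of \cite{MW} and \cite{MSW1} this is not so: a generalized arc with a contractible kink equals $-1$ times the arc with the kink removed, and a closed contractible loop evaluates to $-2$, so terms with negative coefficients genuinely occur after specialization. Term-by-term nonnegativity, and hence your key inequalities $|\sigma_1|\,|\sigma_1'|\geq |AB|\,|CD|$ and $|\sigma_2|\,|\sigma_2'|\geq |AD|\,|BC|$, do not follow from positivity alone. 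The proof in \cite{LLRS} closes exactly this hole by using the convexity of the quadrilateral to show that the two resolutions at the chosen crossing are honest (kink-free, loop-free) curves joining the intended endpoint pairs, whose specialized values can then be compared with the Markov distances of the sides; only with that geometric control in hand does discarding the remaining skein terms give an inequality in the stated direction. As written, your proposal has the correct skeleton but leaves precisely this load-bearing step unproven --- as you yourself acknowledge in your final paragraph.
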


For any $A=(x,y)\in \mathbb Z^2$, denote $m_{x,y}=|OA|$.

\begin{definition}
For any $(x,y)\in \mathbb Z_{>0}^2$ such that $x>y$, we call $m_{x,y}$ a \emph{generalized Markov numbers}.
\end{definition}

Note that if $x$ and $y$ are coprime then $m_{x,y}=m_{\frac{x}{y}}$ is the usual Markov number.

We now list a few results on the generalized Markov numbers that we will need later.

Recall that the Fibonacci numbers are $\{\mathcal F_n\mid n\geq 0\}$ which satisfies $\mathcal F_0=0, \mathcal F_1=\mathcal F_2=1, \mathcal F_n=\mathcal F_{n-1}+\mathcal F_{n-2}$ for $n\geq 2$. It is well-known that the Markov numbers indexed by $(q,1),q>1$ are the odd Fibonacci numbers $\mathcal F_{2q+1}, q>1$.

\begin{equation}
m_{q,1}=\mathcal F_{2q+1}=(\phi^{2q+1}+\phi^{-2q-1})/\sqrt{5}\sim \phi^{2q+1}/\sqrt{5},
\end{equation}
where $\phi=\frac{\sqrt{5}+1}{2}$.

Recall that the Pell numbers are $\{P_n\mid n\geq 0\}$ which satisfies $P_0=0, P_1=1, P_2=1, P_n=2P_{n-1}+P_{n-2}$ for $n\geq 2$. It is well-known that the Markov numbers indexed by $(q,q-1),q>1$ are the odd Pell numbers $P_{2q-1}, q>1$.

\begin{equation}
m_{q,q-1}=P_{2q-1}=\frac{(1+\sqrt{2})^{2q-1}-(1-\sqrt{2})^{2q-1}}{2\sqrt{2}}\sim \frac{(1+\sqrt{2})^{2q-1}}{2\sqrt{2}}.
\end{equation}

The following useful lemma is given in \cite{LLRS}.

\begin{lemma}\cite[Lemma 6.2]{LLRS}\label{lem-fn}
Let $p,q$ be coprime positive integers and let $f_n=m_{nq,np}$. Thus $f_0=0$ and $f_1=m_{q,p}$
is the Markov number. Then, for $n\geq 2$,
$$f_n=3f_1f_{n-1}-f_{n-2}.$$
As a consequence, we have $f_n=\frac{f_1}{\sqrt{9f_1^2-4}}(\alpha^n-\alpha^{-n})$, where $\alpha=(3f_1+\sqrt{9f_1^2-4})/2$.
\end{lemma}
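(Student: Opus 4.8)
The plan is to isolate the linear recurrence as the only substantive point, since the closed form follows from it by elementary means. Assuming $f_n=3f_1f_{n-1}-f_{n-2}$ with $f_0=0$, the characteristic equation $\lambda^2-3f_1\lambda+1=0$ has roots $\alpha,\alpha^{-1}$ with $\alpha+\alpha^{-1}=3f_1$ and $\alpha\alpha^{-1}=1$, whence $\alpha=(3f_1+\sqrt{9f_1^2-4})/2$ and $(\alpha-\alpha^{-1})^2=(\alpha+\alpha^{-1})^2-4=9f_1^2-4$. Writing $f_n=A\alpha^n+B\alpha^{-n}$ and imposing $f_0=0$ together with the value $f_1$ forces $B=-A$ and $A=f_1/(\alpha-\alpha^{-1})=f_1/\sqrt{9f_1^2-4}$, giving $f_n=\frac{f_1}{\sqrt{9f_1^2-4}}(\alpha^n-\alpha^{-n})$, exactly as claimed. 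So everything rests on the recurrence.

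To obtain the recurrence I would work on the once-punctured torus, where $m_{nq,np}$ is the specialized value of the arc from the origin to $(nq,np)$, that is, of the loop $\ell^{n}$ at the puncture winding $n$ times in the primitive direction $(q,p)$; here $\ell=\ell^{1}$ realizes $f_1=m_{q,p}$ and the constant loop gives $f_0=0$. Because the Markov distance is invariant under the lattice translations defining the torus, every segment $A_iA_j$ along the line through these collinear points has value $|A_iA_j|=m_{(j-i)q,(j-i)p}=f_{j-i}$. The key step is to apply the skein relation of \cite{MW} --- the equality underlying the Ptolemy inequality above --- to $\ell$ and $\ell^{n-1}$, drawn so as to cross transversally once near the puncture. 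The two smoothings of this crossing add and cancel the windings, producing $\ell^{n}$ (value $f_n$) and $\ell^{n-2}$ (value $f_{n-2}$); since both strands emanate from the puncture, the resolution carries one factor of the peripheral loop encircling the puncture, whose specialized value is $3$. This yields $3f_1f_{n-1}=f_n+f_{n-2}$, i.e. the asserted recurrence.

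The main obstacle is precisely the exact skein bookkeeping of this last step, and in particular the appearance of the factor $3$. It is worth stressing why no softer argument suffices: applying the Ptolemy relation to the four collinear points $O,A_1,A_{n-1},A_n$, where the convex quadrilateral degenerates and the inequality should saturate to an equality, gives only the quadratic identity $f_{n-1}^2=f_nf_{n-2}+f_1^2$. A direct computation shows this identity is satisfied by the entire one-parameter family $f_n=A(\beta^n-\beta^{-n})$ for every $\beta$ with $A=f_1/(\beta-\beta^{-1})$, so it cannot by itself pin down $\beta+\beta^{-1}=3f_1$. The value $3$ must therefore come from a genuinely punctured resolution, equivalently from the single base computation $f_2=m_{2q,2p}=3f_1^2$, which is the delicate point; it reflects that the relevant curves are loops based at the puncture rather than closed curves in its complement, for which the analogous product-to-sum relation would carry no factor $3$. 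I would discharge this either by the punctured skein relation as above or by computing $f_2=3f_1^2$ directly, then check the outcome against the Fibonacci and Pell cases recorded above (for instance $m_{2,1}=5$ forces $m_{4,2}=75$) as a consistency test, before converting the recurrence into the closed form as in the first paragraph.
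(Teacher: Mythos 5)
First, note that the paper offers no proof of this lemma to compare against: it is quoted directly from \cite[Lemma 6.2]{LLRS}, so you are attempting strictly more than the paper itself does. Within your attempt, the reduction of the closed form to the recurrence is correct, and your observation that the degenerate Ptolemy (skein) identity $f_{n-1}^2=f_nf_{n-2}+f_1^2$ cannot by itself yield the recurrence --- since it holds for every family $f_n=A(\beta^n-\beta^{-n})$ with $A=f_1/(\beta-\beta^{-1})$ --- is accurate and is exactly the right diagnosis of where the difficulty sits.

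However, your proof of the recurrence itself has a genuine gap, and the mechanism you propose for the factor $3$ fails on two counts. Structurally: the skein relation of \cite{MW} equates the \emph{product} of the values of two curves, resolved at one crossing, with the \emph{sum} of the values of the two smoothings (closed components created by a smoothing contribute multiplicative factors on that right-hand side). Resolving one crossing of $\ell$ and $\ell^{n-1}$ therefore produces an identity of the form $f_1f_{n-1}=(\cdots)+(\cdots)$; no resolution can insert a factor $3$ multiplying the \emph{left}-hand side, which is what your equation $3f_1f_{n-1}=f_n+f_{n-2}$ demands. Substantively: a loop encircling the puncture is peripheral --- its holonomy is parabolic, of trace $\pm 2$ --- it is not an essential curve of the bases in \cite{MSW1}, and nothing in the skein calculus assigns it the value $3$. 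The correct carrier of the missing factor is the \emph{essential} simple closed curve $\zeta$ of slope $p/q$, disjoint from $\ell$: its specialized value is $3f_1$, not $3$. (This is the Fricke trace identity --- traces of simple closed curves on the once-punctured torus are three times the Markov numbers --- or it can be derived by resolving the unique crossing of $\zeta$ with an arc $\delta$ of a Farey-neighbouring slope: the two smoothings are the arcs of the two adjacent Farey triangles, and the Markov--Vieta relation gives $x_\zeta x_\delta=m'+m''=3m_{q,p}x_\delta$.) With this in hand the recurrence does follow, from $x_\zeta x_{\ell^{n-1}}=x_{\ell^{n}}+x_{\ell^{n-2}}$, i.e.\ $3f_1f_{n-1}=f_n+f_{n-2}$. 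Your fallback, ``compute $f_2=3f_1^2$ directly,'' is exactly equivalent to establishing $x_\zeta=3f_1$ and you never carry it out; verifying $m_{4,2}=75$ is a consistency check on one example, not an argument. So the only substantive claim of the lemma, the recurrence, remains unestablished in your write-up.
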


In particular, we have

\begin{proposition}\cite{LLRS}\label{prop-app}
For any $q, n>1$, we have
\begin{enumerate}[$(1)$]
  \item
  \begin{equation*}
  m_{qn,n}= \frac{m_{q,1}}{\sqrt{9m_{q,1}^2-4}}(\alpha^n-\alpha^{-n})\sim 3^{n-1}m_{q,1}^n \hspace{5mm} (q\to \infty),
  \end{equation*}
  \item
  \begin{equation*}
   m_{qn,(q-1)n}= \frac{m_{q,q-1}}{\sqrt{9m_{q,q-1}^2-4}}(\alpha^n-\alpha^{-n})\sim 3^{n-1}m_{q,q-1}^n \hspace{5mm} (q\to \infty),
\end{equation*}
\end{enumerate}
where $\alpha=\frac{3m_{q,1}+\sqrt{9m_{q,1}^2-4}}{2}$.

\end{proposition}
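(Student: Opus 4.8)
The plan is to derive both exact formulas directly from Lemma~\ref{lem-fn} and then extract the asymptotics by a single limit computation that applies verbatim to both cases. For part~(1), I would apply Lemma~\ref{lem-fn} to the coprime pair $(q,1)$ (coprimality is automatic), which gives $f_n = m_{nq,n}$ with $f_1 = m_{q,1}$, and hence the closed form $m_{qn,n} = \frac{m_{q,1}}{\sqrt{9m_{q,1}^2-4}}(\alpha^n - \alpha^{-n})$ with $\alpha = (3m_{q,1}+\sqrt{9m_{q,1}^2-4})/2$. For part~(2) I would apply the same lemma to the coprime pair $(q,q-1)$ (consecutive integers are coprime), giving $f_n = m_{nq,n(q-1)}$ with $f_1 = m_{q,q-1}$ and the corresponding closed form. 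This settles the exact identities with no further work.

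For the asymptotics, the point is that both $m_{q,1} = \mathcal F_{2q+1}$ and $m_{q,q-1} = P_{2q-1}$ tend to $\infty$ as $q \to \infty$, so in either case it suffices to prove a single statement: if $t = f_1 \to \infty$ and $n$ is fixed, then $\frac{t}{\sqrt{9t^2-4}}(\alpha^n - \alpha^{-n}) \sim 3^{n-1} t^n$, where $\alpha = (3t + \sqrt{9t^2-4})/2$. I would divide through by $3^{n-1}t^n$ and analyze the factors separately: the prefactor $\frac{t}{\sqrt{9t^2-4}} = \frac{1}{3}(1 - \tfrac{4}{9t^2})^{-1/2} \to \tfrac13$; the ratio $\frac{\alpha}{3t} = \tfrac12 + \tfrac12(1-\tfrac{4}{9t^2})^{1/2} \to 1$, so that $\big(\tfrac{\alpha}{3t}\big)^n \to 1$ for fixed $n$ and therefore $\frac{\alpha^n}{3^{n-1}t^n} = 3\big(\tfrac{\alpha}{3t}\big)^n \to 3$; and finally $\frac{\alpha^{-n}}{3^{n-1}t^n} \to 0$ since $\alpha, t \to \infty$. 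Multiplying the limits gives $\tfrac13 \cdot 3 = 1$, which is exactly the claimed asymptotic equivalence.

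I do not expect a genuine obstacle here; the only thing to be careful about is that $n$ is held fixed while $q \to \infty$, which is what legitimizes replacing $\big(\tfrac{\alpha}{3t}\big)^n$ by its limit and discarding the $\alpha^{-n}$ term. One could alternatively avoid the explicit closed form entirely and argue inductively from the recursion $f_n = 3f_1 f_{n-1} - f_{n-2}$: assuming $f_{n-1} \sim 3^{n-2} f_1^{n-1}$, the leading term of $3 f_1 f_{n-1}$ is $3^{n-1} f_1^n$, while $f_{n-2}$ is lower order in $f_1$, so $f_n \sim 3^{n-1} f_1^n$. The closed-form route is cleaner, however, and simply reuses the formula already recorded in Lemma~\ref{lem-fn}.
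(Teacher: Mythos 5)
Your proof is correct and follows exactly the route the paper intends: the paper states this proposition (citing \cite{LLRS}) as an immediate consequence of Lemma~\ref{lem-fn} (``In particular, we have\dots''), i.e.\ specializing the coprime pair to $(q,1)$ and $(q,q-1)$ and then letting $f_1\to\infty$ with $n$ fixed. Your limit computation simply supplies the details the paper omits, and it also implicitly fixes the paper's small typo that $\alpha$ in part (2) should be built from $m_{q,q-1}$ rather than $m_{q,1}$.
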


As a corollary of Lemma \ref{lem-fn}, we have the following observation.

\begin{lemma}\label{lem-m}
For any $q\in \mathbb Z_{\geq 0}$, we have
\begin{enumerate}[$(1)$]
\item $m_{p,0}=\mathcal F_{2q}$,
\item $m_{p,p}=P_{2q}$.
\end{enumerate}
\end{lemma}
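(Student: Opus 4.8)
The plan is to read Lemma~\ref{lem-m} as two specializations of Lemma~\ref{lem-fn}, taken along the two boundary rays through the origin of slope $1$ and slope $0$, and then to match the resulting linear recurrences against the even-indexed Fibonacci and Pell sequences. Throughout I read the index as $q$ (so the claims are $m_{q,0}=\mathcal{F}_{2q}$ and $m_{q,q}=P_{2q}$), the base case $q=0$ being $m_{0,0}=0=\mathcal{F}_0=P_0$.

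For part (2), the pair $(q',p')=(1,1)$ is coprime with both entries positive, so Lemma~\ref{lem-fn} applies directly: putting $f_n=m_{n,n}$ we get $f_0=0$ and $f_n=3\,m_{1,1}\,f_{n-1}-f_{n-2}$. With the base value $m_{1,1}=2$ this reads $f_n=6f_{n-1}-f_{n-2}$. I then check that $P_{2n}$ obeys the same recurrence: the even-indexed Pell sequence has characteristic roots $(1\pm\sqrt2)^2=3\pm2\sqrt2$, of sum $6$ and product $1$, so $P_{2n}=6P_{2n-2}-P_{2n-4}$ (equivalently, expand $P_{2n}=2P_{2n-1}+P_{2n-2}$ twice to eliminate the odd-index terms). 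Since $P_0=0$ and $P_2=2$ agree with $f_0,f_1$, uniqueness of the solution of a second-order linear recurrence forces $m_{n,n}=P_{2n}$.

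Part (1) is formally the same, with $(q',p')=(1,0)$ and $f_n=m_{n,0}$, giving $f_n=3\,m_{1,0}\,f_{n-1}-f_{n-2}=3f_{n-1}-f_{n-2}$ once $m_{1,0}=1$ is known. Here $\mathcal{F}_{2n}=3\mathcal{F}_{2n-2}-\mathcal{F}_{2n-4}$, which one verifies from $\mathcal{F}_{2n}=2\mathcal{F}_{2n-2}+\mathcal{F}_{2n-3}$ together with $\mathcal{F}_{2n-3}=\mathcal{F}_{2n-2}-\mathcal{F}_{2n-4}$; matching $\mathcal{F}_0=0$ and $\mathcal{F}_2=1$ with $f_0,f_1$ gives $m_{n,0}=\mathcal{F}_{2n}$ by the same uniqueness argument.

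Two points need care. First, Lemma~\ref{lem-fn} is stated for $p,q$ positive, whereas part (1) uses $p'=0$; I would note that its proof concerns the collinear points $n\cdot(q',p')$ on a single ray and that the underlying skein/Ptolemy input is insensitive to the ray having slope $0$, so the recurrence persists verbatim. Second, and more delicate, are the base evaluations $m_{1,0}=1$ and $m_{1,1}=2$: the recalled Fibonacci and Pell identities are quoted only for $q>1$, and $(1,1)$ sits on the boundary $x=y$ excluded by the standing convention. I would pin these down directly from Definition~\ref{def}, computing the left deformations of the segments $O(1,0)$ and $O(1,1)$ and specializing the corresponding MSW elements; as a cross-check, the $q=1$ extensions of the two quoted identities give exactly $m_{1,0}=P_1=1$ and $m_{1,1}=\mathcal{F}_3=2$. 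The main obstacle is thus entirely in these boundary evaluations, the inductive step being routine once they are fixed.
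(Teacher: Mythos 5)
Your proposal is correct and takes essentially the same route as the paper: the paper also specializes Lemma \ref{lem-fn} to the ray through $(1,0)$ (with the case $(1,1)$ declared "similar"), derives the recurrence $\mathcal F_{2q}=3\mathcal F_{2q-2}-\mathcal F_{2q-4}$ from the Fibonacci relation, and concludes by matching the initial values $m_{0,0}=0=\mathcal F_0$ and $m_{1,0}=1=\mathcal F_2$. Your added scruples about the base evaluations $m_{1,0}=1$, $m_{1,1}=2$ and about applying Lemma \ref{lem-fn} when $p=0$ are reasonable (the paper simply asserts these without comment), but they do not change the substance of the argument.
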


\begin{proof}
We shall only prove (1) as (2) can be proved similarly. For any $q\geq 2$, as $\mathcal F_{2q-2}=\mathcal F_{2q-3}+\mathcal F_{2q-4}$, we have that
$\mathcal F_{2q-2}+\mathcal F_{2q-3}=2\mathcal F_{2q-2}-\mathcal F_{2q-4}$, this implies
$$\mathcal F_{2q-1}=2\mathcal F_{2q-2}-\mathcal F_{2q-4},$$
Thus we have
$$\mathcal F_{2q}=3\mathcal F_{2q-2}-\mathcal F_{2q-4}.$$
Moreover as $m_{0,0}=0=\mathcal F_0$ and $m_{1,0}=1=\mathcal F_2$, by Lemma \ref{lem-fn} the result follows.

\end{proof}

\section{Ratio between two generalized Markov numbers}\label{sec-3}

\subsection{Horizontal and vertical ratios}

\begin{definition}
For any $(q,p)\in \mathbb \mathbb Z_{\geq 0}^2$ with $p\leq q$, the \emph{horizontal ratio} at $(q,p)$ is the ratio $\frac{m_{q+1,p}}{m_{q,p}}$, denote by $h(q,p)$. If $p<q$, the \emph{vertical ratio} at $(q,p)$ is the ratio $\frac{m_{q,p+1}}{m_{q,p}}$, denote by $v(q,p).$
\end{definition}

Thus $m_{q+1,p}>m_{q,p}$ if and only if $h(q,p)>1$, $m_{q,p+1}>m_{q,p}$ if and only if $v(q,p)>1$.

We first investigate the monotonicity of $h(q,p)$ and $v(q,p)$.

\begin{lemma}\label{lem-h1}
For any $(q,p)\in \mathbb Z_{>0}^2$ with $p\leq q$, we have
\begin{enumerate}
\item $$h(q+1,p)> h(q,p),$$
\item if $p+1\leq q$ then
$$h(q,p)>h(q,p+1).$$
\end{enumerate}
\end{lemma}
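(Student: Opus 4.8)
The plan is to reduce each inequality to a multiplicative (log-convexity type) statement about the generalized Markov numbers and then to realize that statement as a single instance of the Ptolemy inequality applied to a carefully chosen convex quadrilateral. Since $h(q,p)=m_{q+1,p}/m_{q,p}$, part (1) is equivalent to $m_{q+2,p}\,m_{q,p}>m_{q+1,p}^{2}$, and part (2) is equivalent to $m_{q+1,p}\,m_{q,p+1}>m_{q,p}\,m_{q+1,p+1}$. Throughout I will use the translation invariance of the Markov distance on the once-punctured torus, namely that $|PQ|$ depends only on $Q-P$, so that $|PQ|=m_{x,y}$ whenever $Q-P=\pm(x,y)$ with $x\ge y\ge 0$; I will also use $m_{1,0}=1$ from Lemma \ref{lem-m} and the positivity of $|PQ|$ for distinct lattice points $P\neq Q$.

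For part (1), I would consider the parallelogram with vertices $A=O$, $B=(q+1,p)$, $C=(q+2,p)$, $D=(1,0)$, whose diagonals $AC$ and $BD$ meet at their common midpoint. Translation invariance identifies $|AC|=m_{q+2,p}$, $|BD|=m_{q,p}$, $|AB|=|CD|=m_{q+1,p}$ and $|AD|=|BC|=m_{1,0}=1$. The Ptolemy inequality then yields $m_{q+2,p}\,m_{q,p}\ge m_{q+1,p}^{2}+1>m_{q+1,p}^{2}$, which is exactly the reformulated part (1). (The hypothesis $p\le q$ keeps every difference vector in the admissible region, including the boundary case $p=q$, where $|BD|=m_{q,q}$.)

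For part (2), I would instead use the quadrilateral $A=O$, $B=(q,p)$, $C=(q+1,p)$, $D=(0,-1)$. A short computation shows that the diagonals $AC$ and $BD$ cross at an interior point, with crossing parameters $p/(p+q+1)$ and $q/(p+q+1)$, both in $(0,1)$, so $ABCD$ is convex in this cyclic order. Translation invariance gives the diagonals $|AC|=m_{q+1,p}$, $|BD|=m_{q,p+1}$ and the opposite sides $|AB|=m_{q,p}$, $|CD|=m_{q+1,p+1}$, where the hypothesis $p+1\le q$ is exactly what keeps the vectors $(q,p+1)$ and $(q+1,p+1)$ in the range where they equal the named generalized Markov numbers. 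Since the remaining sides satisfy $|AD|\,|BC|=|AD|\cdot m_{1,0}>0$, Ptolemy gives $m_{q+1,p}\,m_{q,p+1}\ge m_{q,p}\,m_{q+1,p+1}+|AD|>m_{q,p}\,m_{q+1,p+1}$, which is the reformulated part (2).

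The main obstacle is geometric rather than computational: one must choose the four points so that the four products appearing in Ptolemy are \emph{precisely} the generalized Markov numbers of the target inequality, and then confirm that the chosen quadrilateral is genuinely convex with $AC,BD$ as its diagonals. This is automatic for the parallelogram in part (1) but requires the explicit crossing check in part (2). A secondary point to handle carefully is that translation invariance together with the sign/region conventions sends every relevant difference vector into the admissible range $x\ge y\ge 0$; the one exception, the short side $|AD|=m_{0,1}$ in part (2), need not be identified with any particular generalized Markov number, since only its strict positivity is used. Thus no additional convention for vertical vectors is required, and both parts follow at once.
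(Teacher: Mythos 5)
Your proposal is correct and is essentially the paper's own proof: both parts reduce the claim to the cross-product inequalities $m_{q+2,p}m_{q,p}>m_{q+1,p}^2$ and $m_{q+1,p}m_{q,p+1}>m_{q,p}m_{q+1,p+1}$ and obtain them from a single Ptolemy application, using translation invariance of the Markov distance and the positive extra term for strictness. Your quadrilateral in part (1) is exactly the paper's $\{O,O',A_2,A_3\}$ with $O'=(1,0)$, and your quadrilateral in part (2) is the paper's $\{O,O',A_1',A_2'\}$ with $O'=(0,1)$ translated by $(0,-1)$, so the arguments coincide up to relabeling.
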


\begin{proof}
(1) Let $O=(0,0), A_1=(q,p), A_2=(q+1,p), A_3=(q+2,p)$. Let $O'=(1,0)$. By the Ptolemy inequality, we have $|OA_3||O'A_2|>|OA_2||O'A_3|$, that is
$$|OA_3||OA_1|>|OA_2|^2.$$
It follows that $\frac{|OA_3|}{|OA_2|}>\frac{|OA_2|}{|OA_1|},$ that is
 $$h(q+1,p)>h(q,p).$$

(2) Let $O=(0,0), A_1=(q,p), A_2=(q+1,p), A_1'=(q,p+1), A_2'=(q+1,p+1)$. Let $O'=(0,1)$. By the Ptolemy inequality, we have $|OA_1'||O'A'_2|>|O'A_1'||OA_2'|$, that is
$$|OA'_1||OA_2|>|OA_1||OA_2'|.$$
It follows that $\frac{|OA_2|}{|OA_1|}>\frac{|OA'_2|}{|OA'_1|},$ that is
$$h(q,p)>h(q,p+1)$$

\begin{figure}[h]
\includegraphics{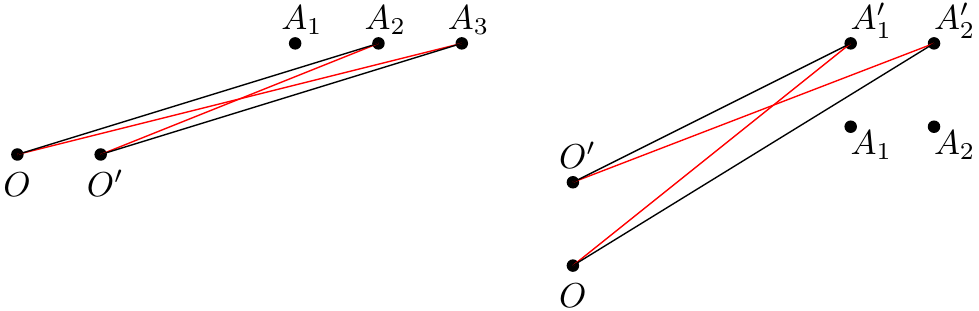}

{\rm Figure for Lemma \ref{lem-h1}}
\end{figure}

\end{proof}

\begin{remark}
 Lemma \ref{lem-h1} implies the function $h(x,y)$ is increasing along the $x$-axis and decreasing along the line $y$-axis.
%\item Note that Lemma \ref{lem-h1} does not hold in case $p=0$ as the $OA_3$ and $O'A_2$ in the proof do not cross each other. In fact $h(q,0)$ is decreasing along $q$.
%\end{enumerate}
\end{remark}

\begin{corollary}\label{cor-h1}
For any $(q,p)\in \mathbb Z_{>0}^2$ with $p\leq q$, we have
$$\frac{(1+\sqrt 2)^{4q+2}+1}{(1+\sqrt 2)((1+\sqrt 2)^{4q}-1)}\leq h(q,p)\leq \frac{\phi^{4q+6}+1}{\phi^2(\phi^{4q+2}+1)},$$
where $\phi=\frac{\sqrt 5+1}{2}$. In particular, we have
$$(1+\sqrt 2)m_{(q,p)}<m_{(q,p+1)}<\frac{3+\sqrt 5}{2}m_{q,p+1}.$$
\end{corollary}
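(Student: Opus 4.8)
The plan is to exploit the monotonicity already recorded in Lemma \ref{lem-h1}(2): for fixed $q$ the horizontal ratio $h(q,p)$ is strictly decreasing in $p$ throughout the admissible range $1\le p\le q$. Consequently the value of $h(q,p)$ for any such $p$ is squeezed between its two extreme values,
\[
h(q,q)\ \le\ h(q,p)\ \le\ h(q,1),
\]
the chain $h(q,1)>h(q,2)>\cdots>h(q,q)$ being obtained by applying Lemma \ref{lem-h1}(2) successively at $p=1,\dots,q-1$. Thus the whole corollary reduces to evaluating the two boundary ratios in closed form and matching them with the asserted bounds.

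For the upper bound I would compute $h(q,1)=m_{q+1,1}/m_{q,1}$. Since $m_{q,1}=\mathcal F_{2q+1}$ is the odd Fibonacci number, Binet's formula gives $m_{q,1}=(\phi^{2q+1}+\phi^{-2q-1})/\sqrt5$, and after clearing $\sqrt5$ and multiplying numerator and denominator by $\phi^{2q+3}$ one finds
\[
h(q,1)=\frac{\phi^{2q+3}+\phi^{-2q-3}}{\phi^{2q+1}+\phi^{-2q-1}}=\frac{\phi^{4q+6}+1}{\phi^{2}(\phi^{4q+2}+1)},
\]
which is exactly the claimed upper bound. For the lower bound I would compute $h(q,q)=m_{q+1,q}/m_{q,q}$. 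Here $m_{q,q}=P_{2q}$ by Lemma \ref{lem-m}(2), while $(q+1,q)$ has the form $(Q,Q-1)$ with $Q=q+1$, so $m_{q+1,q}=P_{2q+1}$. Writing $t=1+\sqrt2$ and using $(1-\sqrt2)^n=(-1)^n t^{-n}$ in the Pell closed form, I get $P_{2q}=(t^{2q}-t^{-2q})/(2\sqrt2)$ and $P_{2q+1}=(t^{2q+1}+t^{-2q-1})/(2\sqrt2)$; multiplying numerator and denominator of the quotient by $t^{2q+1}$ yields
\[
h(q,q)=\frac{t^{4q+2}+1}{t(t^{4q}-1)},
\]
which is the stated lower bound.

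Finally, the uniform (\emph{in particular}) estimate follows by comparing each $q$-dependent endpoint with its $q\to\infty$ limit. A direct cross-multiplication shows $h(q,q)>1+\sqrt2$ (equivalent to $1>-t^{2}$) and $h(q,1)<\phi^{2}=\tfrac{3+\sqrt5}{2}$ (equivalent to $1<\phi^{4}$); combining these with the squeeze above gives $1+\sqrt2<h(q,p)<\tfrac{3+\sqrt5}{2}$, i.e. the asserted two-sided comparison of $m_{q+1,p}$ with $m_{q,p}$.

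The one place where care is needed is the parity bookkeeping of the sign $(-1)^n$ in both the Binet and the Pell closed forms, since it is precisely the even/odd index distinction that turns the two quotients into the $(\,\cdot\,)+1$ and $(\,\cdot\,)-1$ shapes on the two sides of the inequality. Beyond this, no genuine obstacle remains: the only inputs past Lemma \ref{lem-h1} are the closed forms for Fibonacci and Pell numbers already listed, so the argument is essentially a monotonicity reduction to the endpoints followed by two routine closed-form simplifications.
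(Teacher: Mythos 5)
Your proposal is correct and follows essentially the same route as the paper: squeeze $h(q,p)$ between $h(q,q)$ and $h(q,1)$ via Lemma \ref{lem-h1}(2), evaluate these endpoints as Pell and Fibonacci quotients $P_{2q+1}/P_{2q}$ and $\mathcal F_{2q+3}/\mathcal F_{2q+1}$ using Lemma \ref{lem-m} and the closed forms, then compare with the limits $1+\sqrt 2$ and $\phi^2$ for the uniform bound. Your cross-multiplication checks ($1>-t^2$ and $1<\phi^4$) make explicit the step the paper states without computation.
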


\begin{proof}
By Lemma \ref{lem-h1}, we have $h(q,q)\leq h(q,p)\leq h(q,1)$. By Lemma \ref{lem-m}, we have
$$h(q,q)=\frac{m_{q+1,q}}{m_{q,q}}=\frac{P_{2q+1}}{P_{2q}}=\frac{(1+\sqrt 2)^{4q+2}+1}{(1+\sqrt 2)((1+\sqrt 2)^{4q}-1)}.$$
$$h(q,1)=\frac{m_{q+1,1}}{m_{q,1}}=\frac{\mathcal F_{2q+3}}{\mathcal F_{2q+1}}=\frac{\phi^{4q+6}+1}{\phi^2(\phi^{4q+2}+1)}.$$

As $\frac{(1+\sqrt 2)^{4q+2}+1}{(1+\sqrt 2)((1+\sqrt 2)^{4q}-1)}>1+\sqrt 2$ and $\frac{\phi^{4q+6}+1}{\phi^2(\phi^{4q+2}+1)}<\phi^2=\frac{3+\sqrt 5}{2}$, we have
$$(1+\sqrt 2)m_{(q,p)}<m_{(q,p+1)}<\frac{3+\sqrt 5}{2}m_{q,p+1}.$$

\end{proof}

\begin{lemma}\label{lem-v1}
For any $(q,p)\in \mathbb Z_{\geq 0}^2$ with $p<q$, we have
\begin{enumerate}[$(1)$]
\item if $p+1<q$ then
$$v(q,p+1)>v(q,p),$$
\item
$$v(q,p)<v(q+1,p).$$
\end{enumerate}
\end{lemma}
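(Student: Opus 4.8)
The plan is to follow the template of the proof of Lemma \ref{lem-h1} verbatim, but in the vertical direction: each desired comparison of vertical ratios is equivalent to a product inequality among four generalized Markov numbers, and that product inequality is read off from a single application of the Ptolemy inequality to a carefully chosen convex quadrilateral. Throughout I would use the translation invariance of the Markov distance, exactly as it is already used in the proof of Lemma \ref{lem-h1} (where distances such as $|O'A_2|$ are identified with $|OA_1|$): if $C-O'=(x,y)$ then $|O'C|=m_{x,y}$. The only genuine work is to pick four lattice points whose convex hull has its two diagonals giving precisely the two sides of the inequality I want, and to verify convexity so that the Ptolemy inequality applies with the correct pairing of opposite sides.

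For part (1), rewriting $v(q,p+1)>v(q,p)$ as $\frac{m_{q,p+2}}{m_{q,p+1}}>\frac{m_{q,p+1}}{m_{q,p}}$ shows it is equivalent to the log-convexity-in-$p$ statement
\[
m_{q,p+2}\,m_{q,p}>m_{q,p+1}^{2}.
\]
I would take $O=(0,0)$, $B_2=(q,p+1)$, $B_3=(q,p+2)$, and $O'=(0,1)$. These four points form a trapezoid with vertical parallel sides $OO'$ and $B_2B_3$, hence a convex quadrilateral, whose diagonals are $OB_3$ and $B_2O'$. The Ptolemy inequality gives $|OB_3|\,|B_2O'|\ge |OB_2|\,|B_3O'|+|B_2B_3|\,|O'O|$, and the last term is positive, so the inequality is strict. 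By translation invariance $|B_2O'|=m_{q,p}$ and $|B_3O'|=m_{q,p+1}$, so this reads $m_{q,p+2}\,m_{q,p}>m_{q,p+1}\cdot m_{q,p+1}$, which is exactly the claim. The hypothesis $p+1<q$ is precisely what keeps all four points in the admissible region, mirroring the role of $p+1\le q$ in Lemma \ref{lem-h1}(1).

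For part (2), the comparison of $v(q,p)$ with $v(q+1,p)$ involves the same four Markov numbers as Lemma \ref{lem-h1}(2) and is governed by the single mixed product inequality relating $m_{q,p+1}\,m_{q+1,p}$ and $m_{q,p}\,m_{q+1,p+1}$, since
\[
\frac{v(q,p)}{v(q+1,p)}=\frac{m_{q,p+1}\,m_{q+1,p}}{m_{q,p}\,m_{q+1,p+1}}.
\]
I would therefore reduce part (2) to that product inequality directly: either invoke Lemma \ref{lem-h1}(2), which encodes exactly this comparison as a statement about horizontal ratios, or re-derive it by applying the Ptolemy inequality to the convex quadrilateral on $O=(0,0)$, $(q,p+1)$, $(q+1,p+1)$, $O'=(0,1)$, whose diagonals produce the product $m_{q,p+1}\,m_{q+1,p}$ against the opposite-side product $m_{q,p}\,m_{q+1,p+1}$. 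No estimation or asymptotics is required; the whole argument is a one-line consequence of Ptolemy together with translation invariance. I expect the only obstacle to be bookkeeping: confirming that the chosen cyclic order is genuinely convex so that the intended pair of opposite sides appears on the right-hand side of Ptolemy, and tracking the direction of the resulting inequality, since the vertical-ratio comparison in (2) is the rearrangement of the horizontal-ratio comparison in Lemma \ref{lem-h1}(2) and the two orientations must be matched carefully.
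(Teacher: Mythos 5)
Your part (1) is exactly the paper's proof: the same four points $O=(0,0)$, $O'=(0,1)$, $(q,p+1)$, $(q,p+2)$, the same single Ptolemy application, and the same translation identifications $|B_2O'|=m_{q,p}$, $|B_3O'|=m_{q,p+1}$. Your part (2) is also essentially the paper's argument: the paper likewise reduces (2) to the product inequality $m_{q,p+1}m_{q+1,p}>m_{q,p}m_{q+1,p+1}$ by one Ptolemy application on $(q,p)$, $(q+1,p)$, $(q,p+1)$, $(q+1,p+1)$ together with an auxiliary translate of $O$. Your further observation that this product inequality is literally Lemma \ref{lem-h1}(2), because $v(q,p)/v(q+1,p)=h(q,p)/h(q,p+1)$, is a genuine (small) simplification the paper does not exploit; it reruns the quadrilateral instead, and in doing so misprints the auxiliary point (it takes $O'=(0,1)$ where its own identifications $|O'A_2'|=|OA_1'|$, $|O'A_2|=|OA_1|$ require $O'=(1,0)$ — your configuration, with $O'=(0,1)$ and the two height-$(p+1)$ points, is the internally consistent one).

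The one substantive issue is the direction of (2), which you deferred as ``bookkeeping'': it does not come out as stated. The inequality $m_{q,p+1}m_{q+1,p}>m_{q,p}m_{q+1,p+1}$, however derived, rearranges to $v(q,p)>v(q+1,p)$, the \emph{reverse} of the printed claim $v(q,p)<v(q+1,p)$. This is not a defect of your plan but a sign error in the paper's statement: the paper's own proof of (2) ends with the line $v(q,p)>v(q+1,p)$, contradicting the statement it is proving, and the numbers confirm the proof rather than the statement, e.g. $v(2,0)=m_{2,1}/m_{2,0}=5/3$ while $v(3,0)=m_{3,1}/m_{3,0}=13/8<5/3$. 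So carrying out your plan proves the true inequality $v(q,p)>v(q+1,p)$; be aware that the printed statement (and the remark following the lemma, and the chain $v(p+1,p)\leq v(q,p)\leq v(q,0)$ in Corollary \ref{cor-v1}) has the direction flipped. One last trivial slip: in (2) you list the vertices as $O$, $(q,p+1)$, $(q+1,p+1)$, $O'$, which is not the convex cyclic order; the convex order is $O$, $O'$, $(q,p+1)$, $(q+1,p+1)$, and that is the order to which your stated diagonal/opposite-side pairing actually corresponds.
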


\begin{proof}
(1) Let $O=(0,0), A_1=(q,p), A_2=(q,p+1), A_3=(q,p+2)$. Let $O'=(0,1)$. By the Ptolemy inequality, we have $|OA_3||O'A_2|>|OA_2||O'A_3|$, that is
$$|OA_3||OA_1|>|OA_2|^2.$$
It follows that $\frac{|OA_3|}{|OA_2|}>\frac{|OA_2|}{|OA_1|},$ that is
 $$v(q,p+1)>v(q,p).$$

(2) Let $O=(0,0), A_1=(q,p), A_2=(q+1,p), A_1'=(q,p+1), A_2'=(q+1,p+1)$. Let $O'=(0,1)$. By the Ptolemy inequality, we have $|OA_2||O'A'_2|>|O'A_2||OA_2'|$, that is
$$|OA_2||OA'_1|>|OA_1||OA_2'|.$$
It follows that $\frac{|OA_2|}{|OA_1|}>\frac{|OA'_2|}{|OA'_1|},$ that is
$$v(q,p)>v(q+1,p)$$

\begin{figure}[h]
\includegraphics{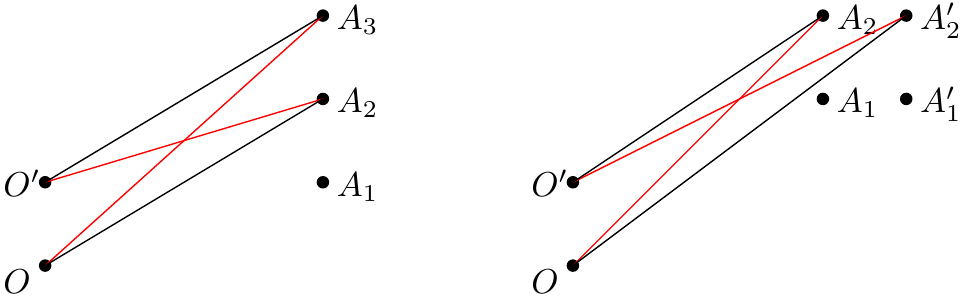}

{\rm Figure for Lemma \ref{lem-v1}}
\end{figure}

\end{proof}

\begin{remark}
 Lemma \ref{lem-v1} implies the function $v(x,y)$ is increasing along the $x$-axis and decreasing along the $y$-axis.
\end{remark}

\begin{corollary}\label{cor-v1}
For any $(q,p)\in \mathbb Z_{\geq 0}^2$ with $p<q$, we have
$$\frac{(1+\sqrt 2)^{4p+4}-1}{(1+\sqrt 2)((1+\sqrt 2)^{4p+2}+1)}\leq v(q,p)\leq \frac{\phi^{4q+2}+1}{\phi(\phi^{4q}+1)},$$
where $\phi=\frac{\sqrt 5+1}{2}$. In particular,
$$m_{q,p}<m_{q,p+1}<\phi m_{q,p}.$$
\end{corollary}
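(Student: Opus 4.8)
The plan is to mirror the proof of Corollary \ref{cor-h1}: I would use the monotonicity of the vertical ratio recorded in Lemma \ref{lem-v1} to trap $v(q,p)$ between two explicit boundary ratios, evaluate those ratios by Lemma \ref{lem-m} in closed Pell/Fibonacci form, and then read off the limiting constants $1$ and $\phi$ for the ``in particular'' clause. For the lower bound I would fix the second coordinate $p$ and let the first coordinate run over the admissible range $q\geq p+1$; Lemma \ref{lem-v1}(2) makes $v(q,p)$ monotone in $q$, so $v(q,p)$ is controlled by its value at the extreme index $q=p+1$, namely $v(p+1,p)=m_{p+1,p+1}/m_{p+1,p}$. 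By Lemma \ref{lem-m}(2) the numerator is the Pell number $P_{2p+2}$ and the denominator is the odd Pell number $P_{2p+1}=m_{p+1,p}$. Substituting $P_n=\frac{(1+\sqrt 2)^n-(1-\sqrt 2)^n}{2\sqrt 2}$ and using $(1-\sqrt 2)=-(1+\sqrt 2)^{-1}$, the conjugate terms collapse and $v(p+1,p)$ reduces to exactly $\frac{(1+\sqrt 2)^{4p+4}-1}{(1+\sqrt 2)((1+\sqrt 2)^{4p+2}+1)}$, the asserted lower bound.

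For the upper bound I would instead fix $q$ and let $p$ vary over $0\leq p\leq q-1$, where Lemma \ref{lem-v1}(1) supplies monotonicity in $p$; then $v(q,p)$ is pinned between its two endpoint values, the Fibonacci endpoint $v(q,0)=m_{q,1}/m_{q,0}=\mathcal F_{2q+1}/\mathcal F_{2q}$ (via Lemma \ref{lem-m}(1)) and the diagonal Pell endpoint $v(q,q-1)=m_{q,q}/m_{q,q-1}=P_{2q}/P_{2q-1}$. Feeding in the Binet form $\mathcal F_n=(\phi^n-(-\phi)^{-n})/\sqrt 5$ and performing the same collapse of conjugate terms isolates the $\phi$-expression $\frac{\phi^{4q+2}+1}{\phi(\phi^{4q}+1)}$ as the upper estimate for $v(q,p)$.

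Finally, I would deduce the ``in particular'' statement $m_{q,p}<m_{q,p+1}<\phi\,m_{q,p}$ by comparing these two closed forms with the constants $1$ and $\phi$. A one-line cross-multiplication shows the Pell lower bound always exceeds $1$, giving $m_{q,p}<m_{q,p+1}$; and the inequality $\frac{\phi^{4q+2}+1}{\phi(\phi^{4q}+1)}<\phi$ is equivalent to $\phi^{4q+2}+1<\phi^2(\phi^{4q}+1)=\phi^{4q+2}+\phi^2$, which holds because $\phi^2>1$, giving $m_{q,p+1}<\phi\,m_{q,p}$. I expect the main obstacle to be twofold and purely algebraic: first, verifying that the cross terms arising from the two conjugate roots cancel cleanly so that the boundary ratios equal precisely the displayed rational functions of $(1+\sqrt 2)$ and $\phi$; and second, checking that Lemma \ref{lem-v1} is invoked in the correct direction at each endpoint, so that the Pell and $\phi$ expressions genuinely bracket $v(q,p)$ rather than lying on the same side of it.
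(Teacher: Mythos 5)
Your plan follows the same route as the paper's own proof: trap $v(q,p)$ between boundary ratios via Lemma \ref{lem-v1}, evaluate those ratios in Pell/Fibonacci closed form via Lemma \ref{lem-m}, and read off the constants. But the issue you yourself flagged at the end --- ``checking that Lemma \ref{lem-v1} is invoked in the correct direction at each endpoint'' --- is not a routine verification; it is exactly where the argument breaks, and it cannot be repaired. Lemma \ref{lem-v1}(1) states $v(q,p+1)>v(q,p)$, i.e.\ $v$ is \emph{increasing} in the second coordinate. Pinning $v(q,p)$ by its endpoints over $0\le p\le q-1$ therefore gives $v(q,0)\le v(q,p)\le v(q,q-1)$: the Fibonacci ratio $v(q,0)=\mathcal F_{2q+1}/\mathcal F_{2q}$ is the \emph{lower} endpoint and the diagonal Pell ratio $v(q,q-1)$ is the \emph{upper} one. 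Your upper-bound step asserts the reverse, $v(q,p)\le v(q,0)$, which would require $v$ to be decreasing in $p$; the lemma you cite gives the opposite inequality. Your lower-bound step is at least coherent with Lemma \ref{lem-v1}(2) \emph{as stated}, but that statement is itself inconsistent with the paper: the proof of part (2) concludes $v(q,p)>v(q+1,p)$, the reverse of what part (2) claims, and small cases confirm that $v$ really is decreasing in the first coordinate (e.g.\ $v(2,1)=12/5=2.4$ while $v(3,1)=29/13\approx 2.23$).

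No re-reading of the monotonicity can close this gap, because the inequality you are asked to prove is false as printed: the two displayed bounds are interchanged. Concretely, $v(2,1)=m_{2,2}/m_{2,1}=12/5=2.4$, whereas the claimed upper bound at $q=2$ is $\frac{\phi^{10}+1}{\phi(\phi^{8}+1)}\approx 1.60$; in fact the claimed Pell lower bound is always $\ge P_2/P_1=2$ while the claimed Fibonacci upper bound is always $<\phi$, so the two printed bounds contradict each other for \emph{every} admissible $(q,p)$. Since $v$ increases in $p$ and decreases in $q$ (this is what the Ptolemy arguments inside Lemma \ref{lem-v1} actually establish), the correct sandwich is
$$\frac{\phi^{4q+2}+1}{\phi(\phi^{4q}-1)}=\frac{\mathcal F_{2q+1}}{\mathcal F_{2q}}=v(q,0)\ \le\ v(q,p)\ \le\ v(p+1,p)=\frac{P_{2p+2}}{P_{2p+1}}=\frac{(1+\sqrt 2)^{4p+4}-1}{(1+\sqrt 2)((1+\sqrt 2)^{4p+2}+1)},$$
which yields $\phi\, m_{q,p}<m_{q,p+1}<(1+\sqrt 2)\, m_{q,p}$ rather than $m_{q,p}<m_{q,p+1}<\phi\, m_{q,p}$. (Note also that the Binet collapse gives $\phi^{4q}-1$, not $\phi^{4q}+1$, in the denominator of the Fibonacci ratio, so even that closed form in the statement carries a sign typo.) For the record, the paper's own proof asserts $v(p+1,p),\,v(q,q-1)\le v(q,p)\le v(q,0)$ and thus commits the same directional misreading of Lemma \ref{lem-v1}; your proposal reproduces the paper's argument essentially verbatim, gap included, but judged as a proof of the stated inequalities the bracketing step fails, and what actually needs fixing is the statement itself.
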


\begin{proof}
By Lemma \ref{lem-v1}, we have $v(p+1,p), v(q,q-1)\leq v(q,p)\leq v(q,0)$. By Lemma \ref{lem-m}, we have
$$v(p+1,p)=\frac{m_{p+1,p+1}}{m_{p+1,p}}=\frac{P_{2p+2}}{P_{2p+1}}=\frac{(1+\sqrt 2)^{4p+4}-1}{(1+\sqrt 2)((1+\sqrt 2)^{4p+2}+1)}.$$
$$v(q,0)=\frac{m_{q,1}}{m_{q,0}}=\frac{\mathcal F_{2q+1}}{\mathcal F_{2q}}=\frac{\phi^{4q+2}+1}{\phi(\phi^{4q}+1)}.$$
The result follows.

\end{proof}

\subsection{Ratio along any line}

Inspirited by the horizontal and vertical ratio, we now define the ratio along any line $l: y=kx+b$ where $k,b\in \mathbb Q$.

For any $t\in \mathbb R$ denote by $l[t]: y=k(x-t)+b$ the line obtained from $l$ by shift along $x$-axis by $t$, by $l\langle t\rangle: (y-t)=k(x-t)+b$ the line obtained from $l$ by shift along $y=x$ by $\sqrt{2}t$. We always assume that $t\in \mathbb Z_{\geq 0}$ unless otherwise
stated. Note that $l\langle t\rangle =l[t-t/k]$ when $k\neq 0$.

\medskip

\emph{List the integral  points in $\{(q,p)\in \mathbb Z_{>0}^2 \mid p< q\}\cap l$ as $(x_1,y_1),(x_2,y_2),\cdots$ in order such that $x_1<x_2<\cdots$.}

\medskip

\begin{notation}\label{no-1}
Denote by $(u_l,v_l),(u'_l,v'_l)$ the first two integral  points in $\{(q,p)\in \mathbb Z_{>0}^2 \mid p< q\}\cap l$. If $k<0$, denote by $(z_l,w_l),(z'_l,w'_l)$ the last two integral  points in $\{(q,p)\in \mathbb Z_{>0}^2 \mid p< q\}\cap l$.
\end{notation}

\medskip

The following follows immediately by the construction of $l[t]$ and $l\langle t\rangle$.

\begin{lemma}\label{lem-fl}
Assume that the line $l$ has negative slope. Then for any $t\in \mathbb Z_{>0}$, we have
\begin{enumerate}[$(1)$]
  \item $(u_{l\langle t\rangle}, v_{l\langle t\rangle})=(u_l+t,v_l+t)$ and $(u'_{l\langle t\rangle}, v'_{l\langle t\rangle})=(u'_l+t,v'_l+t);$
  \item $(z_{l\langle t\rangle}, w_{l\langle t\rangle})=(z_l+t,w_l)$ and $(z'_{l\langle t\rangle}, w'_{l\langle t\rangle})=(z'_l+t,w'_l).$
\end{enumerate}
\end{lemma}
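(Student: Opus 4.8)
The plan is to treat both statements as endpoint tracking for \emph{integer translates} of the line $l$, and to let the geometry decide which of the two shift constructions defined just before the lemma is the relevant one at each end. Both constructions are translations of $l$ by a vector in $\mathbb{Z}^2$ (since $t\in\mathbb{Z}_{>0}$): the diagonal construction $l\langle t\rangle$ is $l$ translated by $(t,t)$, while the horizontal construction $l[t]$ is $l$ translated by $(t,0)$, and the two are tied together by the reparametrisation $l\langle t\rangle=l[t-t/k]$ recorded before the lemma. The governing principle I would isolate first is the elementary fact that for $v\in\mathbb{Z}^2$ with $l'=l+v$, the map $P\mapsto P+v$ is an order-preserving (by abscissa) bijection $l\cap\mathbb{Z}^2\to l'\cap\mathbb{Z}^2$. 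So in each part the task reduces to checking that this bijection does not reshuffle which admissible lattice point is first (part (1)) or last (part (2)); that in turn is controlled by which boundary of the region $\{x>y>0\}$ the translation leaves fixed.

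For part (1) I would take $v=(t,t)$, i.e. the diagonal translation defining $l\langle t\rangle$. Because this map preserves the quantity $x-y$, it fixes the diagonal $y=x$ setwise and preserves the half-plane $\{x>y\}$; because it raises both coordinates it also preserves $\{x>0\}$ and $\{y>0\}$. Hence a lattice point of $l$ lies in $\{x>y>0\}$ exactly when its image does, and no point with $x\le y$ can acquire $x>y$. The active boundary near the first points is the diagonal, which is fixed, so the two smallest-abscissa admissible points of $l$, namely $(u_l,v_l)$ and $(u'_l,v'_l)$, map precisely to the first two admissible points of $l\langle t\rangle$. This gives $(u_{l\langle t\rangle},v_{l\langle t\rangle})=(u_l+t,v_l+t)$ and $(u'_{l\langle t\rangle},v'_{l\langle t\rangle})=(u'_l+t,v'_l+t)$.

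For part (2) the active boundary near the last points is the horizontal axis $y=0$, which the diagonal translation does \emph{not} fix; so here I would use the horizontal translation $v=(t,0)$ underlying the description $l[t]$. This $v$ fixes every ordinate and fixes the axis $y=0$, hence it moves no lattice point across $y=0$ and sends a point to one of the same ordinate with abscissa increased by $t$. Since the slope is negative, the last two admissible points of $l$ are exactly those with the two smallest positive ordinates $w_l,w'_l$ (smallest ordinate means largest abscissa); their images keep these ordinates and shift right by $t$. Any new admissible points a rightward shift creates arise by crossing the diagonal and therefore sit at the large-ordinate (top) end, so they do not disturb the bottom end. This yields $(z_{l[t]},w_{l[t]})=(z_l+t,w_l)$ and $(z'_{l[t]},w'_{l[t]})=(z'_l+t,w'_l)$, with the ordinate unchanged precisely because the shift is horizontal. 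The two parts are thus the two faces of the same shifted configuration: the diagonal translate $l\langle t\rangle$ governs the diagonal end, the horizontal translate $l[t]$ governs the axis end, and the identity $l\langle t\rangle=l[t-t/k]$ records how the one line is carried to the other.

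The step I expect to be the main obstacle is exactly this endpoint bookkeeping: proving that each translation merely slides the admissible window and never permutes its extreme points. At the fixed boundary this is immediate (no point can cross it), but at the opposite, moving boundary one must verify that the points entering or leaving lie at the \emph{far} end and so leave the two endpoints under study untouched. Making this rigorous uses the spacing of consecutive lattice points along $l$ (the primitive vector $(q',-p')$ for slope $k=-p'/q'$) together with the integrality of the shift, and it is here that the hypothesis of negative slope (guaranteeing that a genuine last point exists) and the assumption $t\in\mathbb{Z}_{>0}$ are used. This verification is the substance hidden behind the phrase ``follows immediately by the construction.''
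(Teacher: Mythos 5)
Your proof is correct, and in approach it coincides with the paper's: the paper offers no argument beyond the sentence preceding the lemma (``The following follows immediately by the construction of $l[t]$ and $l\langle t\rangle$''), and your translation bookkeeping --- an integer translate induces an abscissa-order-preserving bijection on lattice points, so one only needs to check which end of the admissible window can gain or lose points --- is exactly what that sentence compresses. The genuinely valuable thing you did is to refuse the literal reading of part (2). As printed, part (2) is a claim about $l\langle t\rangle$, and in that form it is false: for $l: y=-x+10$ and $t=1$ one has $(z_l,w_l)=(8,2)$, while the second-to-last admissible point of $l\langle 1\rangle: y=-x+12$ is $(10,2)$, and the claimed point $(9,2)$ does not even lie on $l\langle 1\rangle$. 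The intended statement is about the horizontal shift $l[t]$ --- this is precisely how the lemma is invoked later, in the proof of Lemma \ref{lem-r2}, where it is cited for the last two integral points of $l[t]$ --- and for $l\langle t\rangle$ itself the correct horizontal displacement of the last points would be $t-t/k$ (via $l\langle t\rangle=l[t-t/k]$), which is in general not an integer. Your proof of the corrected statement is sound. Two minor blemishes, neither damaging: your claim that a lattice point of $l$ lies in $\{x>y>0\}$ \emph{exactly when} its $(t,t)$-translate does is an overstatement, since points with $y\le 0$ can enter the region after translation; what actually saves part (1), as your next sentence in effect says, is that any such entering point has $x>y$ and $y\le 0$, hence larger abscissa than every admissible point of $l$, so it cannot disturb the first two points. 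Similarly, the primitive-vector spacing you invoke at the end is unnecessary: on a negative-slope line, $y\le 0$ forces larger abscissa than any admissible point and $x\le y$ forces smaller abscissa, and that sorting is all the rigor the ``endpoint bookkeeping'' requires.
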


\begin{definition}
Let $k,b\in \mathbb Q$ and $l: y=kx+b$ be a line in $\mathbb R^2$. Let $(x_i,y_i),(x_{i+1},y_{i+1})$ be consecutive points on $l$. The \emph{ratio $r_l(x_i,y_i)$ along $l$} at $(x_i,y_i)$ is defined to be
$$r_l(x_i,y_i):=\frac{m_{(x_{i+1},y_{i+1})}}{m_{(x_i,y_i)}}.$$
\end{definition}

\begin{remark}\label{rem-mo}
By the definition,
\begin{enumerate}[$(1)$]
\item the generalized Markov numbers increase with $x$ along the line $l$ if and only if $r_l(x_i,y_i)\geq 1$ for any $i$.
\item the generalized Markov numbers decrease with $x$ along the line $l$ if and only if $r_l(x_i,y_i)\leq 1$ for any $i$.
\end{enumerate}

\end{remark}

\begin{proposition}\label{prop-l1}
With the foregoing notation. Let $k,b\in \mathbb Q$ and $l: y=kx+b$. Let $(x_i,y_{i}), (x_{i+1},y_{i+1}), (x_{i+2},y_{i+2})$ be three consecutive points on $l$.
\begin{enumerate}[$(1)$]
\item If $b\neq 0$ then
$$r_l(x_{i+1},y_{i+1})>r_1(x_i,y_i),$$
that is, the ratios along $l$ are increase with $x$.
\item If $b=0$ then
$$r_l(x_{i+1},y_{i+1})<r_1(x_i,y_i),$$
that is, the ratios along $l$ are decrease with $x$.
\end{enumerate}
\end{proposition}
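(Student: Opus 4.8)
The plan is to reduce both parts to a single product inequality and then separate the two cases according to whether the origin lies on $l$. Writing $A_1=(x_i,y_i)$, $A_2=(x_{i+1},y_{i+1})$, $A_3=(x_{i+2},y_{i+2})$ and $O=(0,0)$, the definition of the ratio gives $r_l(x_i,y_i)=|OA_2|/|OA_1|$ and $r_l(x_{i+1},y_{i+1})=|OA_3|/|OA_2|$, so that the assertion $r_l(x_{i+1},y_{i+1})>r_l(x_i,y_i)$ (respectively $<$) is equivalent to
$$|OA_3|\,|OA_1|>|OA_2|^2 \qquad (\text{respectively } <).$$
Since $A_1,A_2,A_3$ are consecutive integral points of $l$ they are equally spaced, $A_2-A_1=A_3-A_2=:\mathbf d\in\mathbb Z^2$. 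The dichotomy driving the two cases is that $O$ lies on $l$ precisely when $b=0$, and this is exactly what makes the relevant geometric configuration degenerate.

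For part $(1)$, where $b\neq 0$, I would mimic the argument of Lemma \ref{lem-h1} using the single auxiliary point $O':=O+\mathbf d$. First I would check that $O,O',A_3,A_2$ form a parallelogram with diagonals $OA_3$ and $O'A_2$: indeed $\vec{OO'}=\mathbf d=-\vec{A_3A_2}$ and $\vec{O'A_3}=\vec{OA_2}$, and this parallelogram is non-degenerate exactly because $O\notin l$, i.e. $b\neq0$. Since the generators $(1,0)$ and $(0,1)$ of $\mathbb Z^2$ leave the Markov distance invariant (as already used in Lemmas \ref{lem-h1} and \ref{lem-v1}), translation by $\mathbf d$ is also an isometry, whence $|O'A_2|=|OA_1|$, $|O'A_3|=|OA_2|$ and $|A_3A_2|=|OO'|$. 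Applying the Ptolemy inequality to this convex quadrilateral and keeping only the term $|A_2O|\,|O'A_3|=|OA_2|^2$ on the right, I obtain
$$|OA_3|\,|OA_1|=|OA_3|\,|O'A_2|\ge |A_2O|\,|O'A_3|+|OO'|\,|A_3A_2|>|OA_2|^2,$$
the strict inequality coming from the discarded positive term $|OO'|\,|A_3A_2|$. This is exactly the desired increase of the ratios.

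For part $(2)$, where $b=0$, the four points $O,A_1,A_2,A_3$ are collinear, the parallelogram above degenerates, and Ptolemy no longer applies; here I would instead invoke Lemma \ref{lem-fn}. The points of $l$ in the region are $(nq,np)$, $n\ge1$, with $p,q$ coprime, so setting $f_n:=m_{nq,np}$ the three consecutive numbers are $f_n,f_{n+1},f_{n+2}$ for the relevant $n\ge1$, and I must show $f_{n+2}f_n<f_{n+1}^2$. I would prove the Cassini-type identity $f_{m+1}f_{m-1}-f_m^2=-f_1^2$ for all $m\ge1$ by putting $D_m:=f_{m+1}f_{m-1}-f_m^2$ and telescoping via the recurrence $f_{m+1}+f_{m-1}=3f_1f_m$ of Lemma \ref{lem-fn}:
$$D_m-D_{m-1}=f_{m-1}(f_{m+1}+f_{m-1})-f_m(f_m+f_{m-2})=3f_1f_mf_{m-1}-3f_1f_mf_{m-1}=0,$$
so that $D_m=D_1=f_2f_0-f_1^2=-f_1^2<0$ using $f_0=0$. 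Applying this with $m=n+1$ gives $f_{n+2}f_n-f_{n+1}^2=-f_1^2<0$, which is precisely the reversed inequality and hence the decrease of the ratios.

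I expect the only genuine subtlety to lie in the geometric bookkeeping of part $(1)$: verifying that $O,O',A_3,A_2$ is genuinely convex with $OA_3$ and $O'A_2$ as its diagonals, so that the Ptolemy inequality applies in the correct orientation, and confirming that its non-degeneracy is equivalent to $b\neq0$. This convexity check, together with the observation that the very same configuration collapses exactly when $b=0$, is what cleanly separates the increasing from the decreasing regime; once it is in place, both parts follow from the two lemmas already available.
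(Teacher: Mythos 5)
Your proof is correct, and it splits cleanly into a part that matches the paper and a part that genuinely departs from it. For part (1) you reproduce the paper's own argument: the paper likewise sets $O'=(x_{i+1}-x_i,y_{i+1}-y_i)$, observes that $OA_3$ crosses $O'A_2$ precisely because $O\notin l$, and applies the Ptolemy inequality plus translation invariance to get $|OA_3|\,|OA_1|>|OA_2|^2$; your explicit justification of strictness via the discarded positive term $|OO'|\,|A_3A_2|$ is a detail the paper leaves implicit (note also that the paper's final displayed line in (1) has a sign typo, writing $<$ where its own displayed product inequality forces $>$). Part (2) is where you diverge. The paper stays geometric even in the degenerate case: when $b=0$ the points are collinear, so it applies the skein-relation machinery to the \emph{left deformations} $\gamma^L_{OA_2}$ and $\gamma^L_{O'A_3}$, which do cross although the straight segments do not, obtaining the reversed inequality $|OA_2|\,|O'A_3|>|OA_3|\,|O'A_2|$. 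You instead invoke Lemma \ref{lem-fn}: writing the points of $l$ in the region as $(nq,np)$ with $p,q$ coprime (and $p<q$, which holds automatically since the points satisfy $x>y>0$, forcing $k>0$ as the paper notes), the recurrence $f_n=3f_1f_{n-1}-f_{n-2}$ gives, by your correct telescoping computation, the Cassini-type identity $f_{m+1}f_{m-1}-f_m^2=-f_1^2<0$, hence the strict decrease of the ratios. Each route has its advantages: yours is more elementary and self-contained, sidestepping the genuinely delicate point of the paper's proof (deciding which deformed arcs cross in the collinear configuration), and it yields a stronger quantitative conclusion, namely that the defect $f_{n+1}^2-f_nf_{n+2}$ equals the constant $f_1^2$ exactly; the paper's argument, on the other hand, is uniform with all its other Ptolemy-based lemmas and needs no parametrization of the integral points on the line.
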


\begin{proof}
Let $O=(0,0), A_1=(x_i,y_i), A_2=(x_{i+1},y_{i+1})$ and $A_3=(x_{i+2},y_{i+2})$. Let $O'=(x_{i+1}-x_i,y_{i+1}-y_i)$.

(1) If $b\neq 0$, then $O$ is not on $l$, and thus $OA_3$ crosses $O'A_2$. Then by the Ptolemy inequality, we have
$|OA_3||O'A_2|>|OA_2||O'A_3|$, that is,
$$|OA_3||OA_1|>|OA_2||OA_2|.$$
Thus, $$r_l(x_{i+1},y_{i+1})=\frac{|OA_3|}{|OA_2|}<\frac{|OA_2|}{|OA_1|}=r_l(x_{i},y_{i}).$$

(2) If $b=0$ we have $k>0$. Then $O\in l$ and $\gamma^L_{OA_2}$ crosses $\gamma^L_{O'A_3}$, where $\gamma^L_{OA_2}$ and $\gamma^L_{O'A_3}$ are the left deformations of $OA_2$ and $O'A_3$, respectively, given in \cite{LLRS}. By the Ptolemy inequality, we have
$|OA_2||O'A_3|>|OA_3||O'A_2|$, that is
$$|OA_2||OA_2|>|OA_3||OA_1|.$$
Thus, $$r_l(x_{i},y_{i})=\frac{|OA_2|}{|OA_1|}>\frac{|OA_3|}{|OA_2|}=r_l(x_{i+1},y_{i+1}).$$

\begin{figure}[h]
\includegraphics{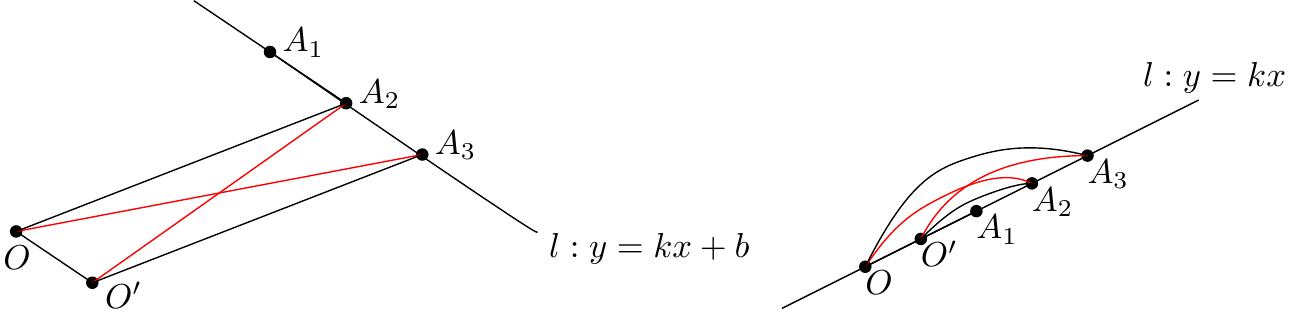}

{\rm Figure for Proposition \ref{prop-l1}}
\end{figure}

\end{proof}

\begin{proposition}\label{prop-mon}
With the foregoing notation. Let $k,b\in \mathbb Q$ and $l: y=kx+b$.
\begin{enumerate}[$(1)$]
  \item If $k\geq 0$ then the generalized Markov numbers increase with $x$ along $l$;
  \item If $k<0$ then the generalized Markov numbers first decrease with $x$ to some point $(x_{i(l)},y_{i(l)})$ then increase with $x$ along $l$; moreover,
  \begin{enumerate}[$(2.1)$]
  \item the generalized Markov numbers increase with $x$ along $l$ if and only if
  $$m_{u_l,v_l}\leq m_{u'_l,v'_l},$$ in terms of Notation \ref{no-1}, that is $r_l(u_l,v_l)\geq 1$;
  \item the generalized Markov numbers decrease with $x$ along $l$ if and only if
  $$m_{z_l,w_l}\geq m_{z'_l,w'_l},$$ in terms of Notation \ref{no-1}, that is $r_l(z_l,w_l)\leq 1$.
  \item the generalized Markov numbers are not monotonic along the line $l$ if and only if $r_l(u_l,v_l)<1$ and $r_l(z_l,w_l)> 1$
  \end{enumerate}

\end{enumerate}

\end{proposition}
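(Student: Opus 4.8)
The plan is to reduce the entire statement to the monotonicity of the consecutive ratios that is already supplied by Corollaries \ref{cor-h1}, \ref{cor-v1} and Proposition \ref{prop-l1}; once that input is in hand, only combinatorial bookkeeping remains.

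For part $(1)$ I would show that a single step along $l$ strictly increases the Markov number. If $(x_i,y_i)$ and $(x_{i+1},y_{i+1})$ are consecutive points on $l$ with $k\ge 0$, then $x_{i+1}>x_i$ and $y_{i+1}\ge y_i$. Join them by the monotone staircase that first sweeps horizontally from $(x_i,y_i)$ to $(x_{i+1},y_i)$ and then vertically up to $(x_{i+1},y_{i+1})$. Every lattice point of this staircase lies in the admissible region $\{x>y>0\}$: on the horizontal part $x\ge x_i>y_i\ge 1$, and on the vertical part $x_{i+1}>y_{i+1}\ge y\ge 1$, so in particular the corner $(x_{i+1},y_i)$ is admissible. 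By Corollary \ref{cor-h1} each unit horizontal step multiplies the Markov number by $h>1$ and by Corollary \ref{cor-v1} each unit vertical step by $v>1$; hence $r_l(x_i,y_i)>1$ for every $i$, and the numbers increase by Remark \ref{rem-mo}$(1)$. The case $k=0$ is the purely horizontal sub-case, and the through-origin case $b=0$, $k>0$ (where $O\in l$ and Proposition \ref{prop-l1} does not apply) is handled by the same staircase, or alternatively by Lemma \ref{lem-fn}, where $f_n$ is visibly increasing in $n$.

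For part $(2)$, first note that $k<0$ forces $b>0$, since any admissible $(x,y)\in l$ gives $b=y-kx=y+|k|x>0$; thus $O=(0,0)\notin l$ and Proposition \ref{prop-l1}$(1)$ applies, making the consecutive ratios strictly increasing. As $k<0$ and $b>0$ the set $l\cap\{x>y>0\}$ is bounded, so the points form a finite list $(x_1,y_1),\dots,(x_N,y_N)$ with $(x_1,y_1)=(u_l,v_l)$, $(x_2,y_2)=(u'_l,v'_l)$, $(x_{N-1},y_{N-1})=(z_l,w_l)$, $(x_N,y_N)=(z'_l,w'_l)$; I may assume $N\ge 2$, the case $N=1$ being vacuous. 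Because the strictly increasing ratios cross the value $1$ at most once, there is a unique index $i(l)$ with $r_l(x_i,y_i)<1$ for $i<i(l)$ and $r_l(x_i,y_i)\ge 1$ for $i\ge i(l)$; equivalently the numbers strictly decrease down to the valley $(x_{i(l)},y_{i(l)})$ and then increase, which is the asserted shape.

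The three equivalences then follow from the fact that a strictly increasing ratio sequence attains its minimum at the first ratio $r_l(u_l,v_l)$ and its maximum at the last ratio $r_l(z_l,w_l)$. By Remark \ref{rem-mo}, the numbers increase iff every ratio is $\ge 1$ iff the smallest one satisfies $r_l(u_l,v_l)\ge 1$, i.e. $m_{u_l,v_l}\le m_{u'_l,v'_l}$, giving $(2.1)$; dually they decrease iff the largest one satisfies $r_l(z_l,w_l)\le 1$, i.e. $m_{z_l,w_l}\ge m_{z'_l,w'_l}$, giving $(2.2)$. Finally non-monotonicity is the negation of ``increasing or decreasing'': failure to increase means some ratio is $<1$, hence $r_l(u_l,v_l)<1$, and failure to decrease means some ratio is $>1$, hence $r_l(z_l,w_l)>1$, and conversely; this is $(2.3)$. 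I expect the only genuinely delicate point to be the domain bookkeeping in part $(1)$---checking that the staircase stays inside $\{x>y>0\}$ and that its boundary inputs still lie in the ranges of Corollaries \ref{cor-h1} and \ref{cor-v1} (with Lemma \ref{lem-m} covering the axes); everything else is a formal consequence of the monotonicity of the ratios.
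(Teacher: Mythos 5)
Your proof is correct, and on part (2)---the heart of the statement---it coincides with the paper's argument: the paper's entire proof is the one-line remark that the claim ``follows immediately by Proposition \ref{prop-l1} and Remark \ref{rem-mo}'', which is exactly your reasoning ($k<0$ forces $b>0$, hence $O\notin l$ and the consecutive ratios are strictly increasing; a strictly increasing ratio sequence crosses $1$ at most once, giving the valley shape, and the three equivalences are read off from the first and last ratios). Where you genuinely differ is part (1). The paper cites the same two results there, but as your write-up implicitly recognizes, they do not suffice on their own: for $k\geq 0$ Proposition \ref{prop-l1} only gives monotonicity of the ratios (increasing if $b\neq 0$, decreasing if $b=0$), and monotone ratios need not all be $\geq 1$; moreover a line with $k>1$ meets the region $\{x>y>0\}$ in only finitely many points, so no limiting argument can rescue this. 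Your staircase argument via Corollaries \ref{cor-h1} and \ref{cor-v1}---each unit step to the right or upward strictly increases the generalized Markov number, and the staircase joining consecutive points of $l$ stays inside the admissible region---supplies exactly the missing ingredient, and it is in fact the same device the paper itself uses later (in the proof of Proposition \ref{cor-t1}) to handle the $b=0$ case. In short: identical to the paper on part (2), and a more complete, self-contained treatment of part (1) than the paper's one-line proof.
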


\begin{proof}
It follows immediately by Proposition \ref{prop-l1} and Remark \ref{rem-mo}.

\end{proof}

As a corollary of Proposition \ref{prop-l1}, the following can be viewed as modified triangle inequality for the Markov distance.

\begin{proposition}\label{cor-t1}
Let $(x,y), (x',y'),(x'',y'')\in \mathbb Z_{\geq 0}^2$ be three points with $x\geq y,x'\geq y'$ and $x''\geq y''$. If $(x',y')=\frac{(x,y)+(x'',y'')}{2}$ then
$$m_{x,y}+m_{x'',y''}\geq 2m_{x',y'}.$$
\end{proposition}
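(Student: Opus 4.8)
The plan is to reduce the claimed inequality to the monotonicity-of-ratios result already established in Proposition~\ref{prop-l1}. The key observation is that the three collinear points $(x,y)$, $(x',y')$, $(x'',y'')$ with $(x',y')$ the midpoint are three \emph{equally spaced} points on the line $l$ through them, and if $(x',y')-(x,y)=(x''-x',y''-y')=:(a,b)$ is the common difference vector, then these need not be consecutive integral points of $l$, but they are consecutive points of the sublattice of $l$ with step $(a,b)$. I would first handle the generic case where $(x,y)$, $(x',y')$, $(x'',y'')$ are genuinely consecutive integral points on $l$; the general case will follow by the same Ptolemy argument applied with the step vector $(a,b)$ in place of a primitive step.

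First I would set $O=(0,0)$, $A_1=(x,y)$, $A_2=(x',y')$, $A_3=(x'',y'')$, and the auxiliary point $O'=(x'-x,y'-y)=(a,b)$, exactly as in the proof of Proposition~\ref{prop-l1}. Because $(x',y')$ is the midpoint, we have $A_2=A_1+O'$ and $A_3=A_2+O'$, so $O'A_2A_3$ is a translate of $OA_1A_2$ and the four points $O,A_3,O',A_2$ form a convex quadrilateral whose diagonals are $OA_3$ and $O'A_2$. Applying the Ptolemy inequality to this quadrilateral gives $|OA_3|\,|O'A_2|\geq |OA_2|\,|O'A_3|+|OA_1|\,|\text{(remaining side)}|$; the point is to arrange the labelling so that the two products on the right reproduce $|OA_2|^2$ and the translation invariance $|O'A_2|=|OA_1|$, $|O'A_3|=|OA_2|$ collapses the inequality to
\begin{equation*}
|OA_3|\,|OA_1|\geq |OA_2|^2.
\end{equation*}
This is precisely the statement $r_l(x,y)\geq r_l(x',y')$ rewritten multiplicatively, and in the notation $m_{x,y}=|OA_1|$, etc., it reads $m_{x,y}\,m_{x'',y''}\geq m_{x',y'}^2$.

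From the multiplicative inequality $m_{x,y}\,m_{x'',y''}\geq m_{x',y'}^2$ I would conclude the additive one by the AM--GM inequality: since all three generalized Markov numbers are positive,
\begin{equation*}
m_{x,y}+m_{x'',y''}\geq 2\sqrt{m_{x,y}\,m_{x'',y''}}\geq 2\sqrt{m_{x',y'}^2}=2\,m_{x',y'}.
\end{equation*}
This is exactly the desired conclusion, and it is strictly stronger than what one would get from a naive triangle inequality, which explains the phrase ``modified triangle inequality.''

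The main obstacle I anticipate is purely geometric bookkeeping rather than analytic depth: verifying that $O,A_1,A_2,A_3,O'$ are positioned so that the relevant quadrilateral is genuinely convex and its diagonals cross, so that the Ptolemy inequality of \cite[Corollary 3.6]{LLRS} applies with the correct orientation. In Proposition~\ref{prop-l1} this was handled by distinguishing the cases $b\neq 0$ (line misses the origin, $OA_3$ crosses $O'A_2$) and $b=0$ (line through the origin, where one must instead invoke the left-deformation crossing $\gamma^L_{OA_2}$ vs.\ $\gamma^L_{O'A_3}$). Here, however, the inequality $m_{x,y}\,m_{x'',y''}\geq m_{x',y'}^2$ points in the \emph{same} direction in both cases, so I expect the two sub-cases to yield the same conclusion and no sign issue to arise; the only genuine care needed is the degenerate possibility that $O$ lies on the line through the three points, together with the boundary behaviour when some coordinate equals zero (the hypothesis allows $(x,y)\in\mathbb Z_{\geq 0}^2$, so points on the axes must be admitted, using $m_{p,0}=\mathcal F_{2p}$ and $m_{p,p}=P_{2p}$ from Lemma~\ref{lem-m} to give meaning to the boundary values). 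Once the convexity/crossing configuration is confirmed, the argument is immediate from Proposition~\ref{prop-l1} plus AM--GM.
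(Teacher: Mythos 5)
Your reduction works in two of the three configurations, but it breaks at exactly the point you dismiss as a non-issue. For lines not through the origin your argument is essentially the paper's: Ptolemy (i.e.\ Proposition~\ref{prop-l1}(1), applied repeatedly when the three equally spaced points are not consecutive integral points of $l$) gives $m_{x,y}\,m_{x'',y''}\geq m_{x',y'}^2$, and then AM--GM (the paper writes it as $t+1/t\geq 2$) finishes; the vertical case is handled the same way via Lemma~\ref{lem-v1}. But when $b=0$, i.e.\ the three points are $(nq,np)$, $((n+d)q,(n+d)p)$, $((n+2d)q,(n+2d)p)$ on a line through the origin, the multiplicative inequality is \emph{false}, not true. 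Proposition~\ref{prop-l1}(2) says the ratios along such a line are \emph{decreasing}: because $O$ lies on $l$, the Ptolemy inequality must be applied to the crossing of the left deformations $\gamma^L_{OA_2}$ and $\gamma^L_{O'A_3}$, and it comes out with the opposite orientation, $|OA_2|^2>|OA_1|\,|OA_3|$. Concretely, with $f_n=m_{nq,np}$, Lemma~\ref{lem-fn} gives $f_2=3f_1^2$ and $f_3=9f_1^3-f_1$, so
\begin{equation*}
f_1f_3=f_2^2-f_1^2<f_2^2,
\end{equation*}
and your chain $m_{x,y}+m_{x'',y''}\geq 2\sqrt{m_{x,y}\,m_{x'',y''}}\geq 2m_{x',y'}$ collapses at the second inequality, since the radicand is strictly smaller than $m_{x',y'}^2$. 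So the claim that ``the inequality points in the same direction in both cases'' is precisely the gap.

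This case needs a separate idea, which is what the paper supplies: along a line through the origin every ratio exceeds $2$. By Lemma~\ref{lem-fn} the limit $\alpha$ of the ratios $f_{n+1}/f_n$ satisfies $\alpha+1/\alpha=3f_1\geq 3$, hence $\alpha>2$; since the ratios are decreasing (Proposition~\ref{prop-l1}(2)), each ratio is at least this limit, so $m_{x'',y''}>2m_{x',y'}$, which alone implies the additive inequality with no multiplicative step at all. (Note that in this case the additive inequality holds while the multiplicative one fails: $f_1+f_3=9f_1^3\geq 6f_1^2=2f_2$.) With that repair -- and the minor orientation slip in identifying $|OA_1||OA_3|\geq|OA_2|^2$ with ``ratios increasing'' rather than decreasing -- your argument coincides with the paper's proof.
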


\begin{proof}

If $x=x'=x''$ we may assume that $y<y'<y''$, by Lemma \ref{lem-v1} we have $v(x,y)< v(x',y')$. Thus
$$\begin{array}{rcl}
m_{x,y}+m_{x'',y''}&=&(\frac{1}{v(x,y)}+v(x',y'))m_{x',y'}\vspace{1.5mm}\\
&> &
(\frac{1}{v(x,y)}+v(x,y))m_{x',y'} \vspace{1.5mm}\\
&\geq &
2m_{x',y'}.
\end{array}$$

If $x\neq x'$ we may assume that $(x,y), (x',y'),(x'',y'')$ are on the line $l: y=kx+b$. We may further assume that $x<x'<x''$.

If $b\neq 0$, by Proposition \ref{prop-l1} repeatedly, we have $\frac{m_{x'',y''}}{m_{x',y'}}>\frac{m_{x',y'}}{m_{x,y}}$. Thus
$$\begin{array}{rcl}
m_{x,y}+m_{x'',y''}&=&(\frac{m_{x,y}}{m_{x',y'}}+\frac{m_{x'',y''}}{m_{x',y'}})m_{x',y'}\vspace{1.5mm}\\
&> &
(\frac{m_{x,y}}{m_{x',y'}}+\frac{m_{x',y'}}{m_{x,y}})m_{x',y'} \vspace{1.5mm}\\
&\geq &
2m_{x',y'}.
\end{array}$$

If $b=0$ then $k\geq 0$. By Corollaries \ref{cor-h1} and \ref{cor-v1}, we have $r_l(x,y)>1$ for all $(x,y)\in l\cap \mathbb Z_{\geq 0}^2$. Assume that $q,p$ are corpime and $(q,p)\in l$. By Lemma \ref{lem-fn}, we have $lim_{x\to \infty} r_l(x,y)+\frac{1}{lim_{x\to \infty} r_l(x,y)}=3m_{q,p}>3$. It follows that $lim_{x\to \infty} r_l(x,y)>2$. By Proposition \ref{prop-l1}, $r_l(x,y)$ is decreasing along $x$, it follows that $r_l(x_{i},y_{i})>lim_{x\to \infty} r_l(x,y)>2$ for any integral point $(x_i,y_i)$ on $l$. It follows that $\frac{m_{x'',y''}}{m_{x',y'}}>2$.
Thus
$$m_{x,y}+m_{x'',y''}>2m_{x',y'}.$$

\end{proof}

The following result generalizes Lemmas \ref{lem-h1} and \ref{lem-v1} to the lines with a negative slope.

\begin{proposition}\label{prop-l2}
Let $l,l'$ be two lines with same negative slope. Let $(x_i,y_i)$ and $(x'_i,y'_i)$ be integral  points on $l$ and $l'$, respectively.
\begin{enumerate}[$(1)$]
\item If $x_i<x'_i$ and $y_i=y'_i$ then
$$r_l(x_i,y_i) < r_{l'}(x'_i,y'_i).$$
\item If $x_i=x'_i$ and $y_i<y'_i$ then
$$r_l(x_i,y_i) > r_{l'}(x'_i,y'_i).$$
\item If $y_i-x_i=y'_i-x'_i$ and $x_i<x'_i$, then
$$r_l(x_i,y_i) > r_{l'}(x'_i,y'_i).$$
\end{enumerate}
\end{proposition}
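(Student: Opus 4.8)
The plan is to mimic the proofs of Lemmas \ref{lem-h1} and \ref{lem-v1}, combining the translation invariance of the Markov distance (the fact that $|AB|$ depends only on $B-A$, already used implicitly in those two lemmas) with the Ptolemy inequality. Since $l$ and $l'$ share the same negative slope $k=-c/a$ with $a,c>0$ and $\gcd(a,c)=1$, consecutive integral points on either line differ by the primitive vector $(a,-c)$; because the region $\{x>y>0\}$ is convex, a line meets it in a single segment, so the listed points on each line form one arithmetic progression with this common difference. Thus I may write $A_1=(x_i,y_i)$, $A_2=A_1+(a,-c)$ on $l$, and $A_1'=(x'_i,y'_i)$, $A_2'=A_1'+(a,-c)$ on $l'$, so that $r_l(x_i,y_i)=|OA_2|/|OA_1|$ and $r_{l'}(x'_i,y'_i)=|OA_2'|/|OA_1'|$, where $O=(0,0)$.

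For each case I introduce an auxiliary point $O'$ equal to the translation vector carrying the unprimed data to the primed data, namely $O'=(x'_i-x_i,0)$ in (1), $O'=(0,\,y'_i-y_i)$ in (2), and $O'=(x'_i-x_i,\,x'_i-x_i)$ in (3). In every case one checks directly that $A_1'-O'=A_1-O$ and $A_2'-O'=A_2-O$, so translation invariance gives $|O'A_1'|=|OA_1|$ and $|O'A_2'|=|OA_2|$. Hence the desired comparison of ratios is equivalent, after clearing denominators, to a single inequality among the four distances determined by $O,O',A_1',A_2'$; for instance in (1) the claim $r_l<r_{l'}$ reduces to $|OA_2'|\,|O'A_1'|>|OA_1'|\,|O'A_2'|$, while in (2) and (3) the claim $r_l>r_{l'}$ reduces to $|O'A_2'|\,|OA_1'|>|OA_2'|\,|O'A_1'|$.

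I then apply the Ptolemy inequality (\cite[Corollary 3.6]{LLRS}) to the quadrilateral with vertices $O,O',A_1',A_2'$, taken in the cyclic order that makes the two products in the ratio comparison the diagonals. In (1), the cyclic order $O,O',A_2',A_1'$ makes $OA_2'$ and $O'A_1'$ the diagonals, and Ptolemy reads $|OA_2'|\,|O'A_1'|\ge |OO'|\,|A_1'A_2'|+|OA_1'|\,|O'A_2'|$; since $|OO'|$ and $|A_1'A_2'|$ are strictly positive this yields the required strict inequality, and substituting back gives $r_l(x_i,y_i)<r_{l'}(x'_i,y'_i)$. Cases (2) and (3) are identical in structure, using the cyclic order $O,O',A_1',A_2'$ with diagonals $OA_1'$ and $O'A_2'$; here the extra Ptolemy term reverses the comparison and produces $r_l>r_{l'}$, as claimed.

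The one genuine point to verify — and the main obstacle — is that in each configuration the four points $O,O',A_1',A_2'$ really do form a convex quadrilateral in the stated cyclic order, so that Ptolemy applies with the intended choice of diagonals. This is precisely where the sign data enters: $a,c>0$ place $A_2'$ strictly to the lower right of $A_1'$, the shift coordinates are positive, and all four $y$-coordinates are positive because $A_1,A_2,A_1',A_2'$ all lie in $\{x>y>0\}$. I would confirm convexity by a short orientation (cross-product) check at each vertex in each of the three cases; this simultaneously rules out degeneracy and hence secures the strictness of the inequalities. Once convexity is established, the remainder is the bookkeeping sketched above.
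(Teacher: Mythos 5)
Your proof is correct and follows essentially the same route as the paper: both arguments combine integer-translation invariance of the Markov distance with the Ptolemy inequality applied to a convex quadrilateral containing an auxiliary translated point $O'$, reducing each case to the same product inequality (e.g.\ $|OA_2'|\,|OA_1|>|OA_2|\,|OA_1'|$ in case (1)). The only immaterial difference is the choice of $O'$: the paper takes the step vector $(x_{i+1}-x_i,\,y_{i+1}-y_i)$ along the lines and applies Ptolemy to the quadrilateral on $O,O',A_2,A_2'$, while you take the translation vector carrying $l$ to $l'$ and use $O,O',A_1',A_2'$ (mirroring how the paper proves part (2) of Lemmas \ref{lem-h1} and \ref{lem-v1}).
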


\begin{proof}
Let $O=(0,0), A_1=(x_i,y_i), A_2=(x_{i+1},y_{i+1}), A_1'=(x'_i,y'_i), A_2'=(x'_{i+1},y'_{i+1})$. Let $O'=(x_2-x_1,y_2-y_1)$.

(1) As $x_i<x'_i$ and $y_i=y'_i$, $OA_2'$ crosses $O'A_2$. By the Ptolemy inequality, we have $|OA'_2||O'A_2|>|OA_2||O'A_2'|$, that is
$$|OA'_2||OA_1|>|OA_2||OA'_1|.$$
It follows that $\frac{|OA'_2|}{|OA'_1|}>\frac{|OA_2|}{|OA_1|},$ that is
$$r_l(x_i,y_i) < r_{l'}(x'_i,y'_i).$$

(2) As $x_i=x'_i$ and $y_i<y'_i$, $OA_2$ crosses $O'A'_2$. By the Ptolemy inequality, we have $|OA_2||O'A'_2|>|OA'_2||O'A_2|$, that is
$$|OA_2||OA'_1|>|OA'_2||OA_1|.$$
It follows that $\frac{|OA_2|}{|OA_1|}>\frac{|OA'_2|}{|OA'_1|},$ that is
$$r_l(x_i,y_i) > r_{l'}(x'_i,y'_i).$$

(3) As $x_i<x'_i$ and $OO'=A_1A_2=A'_1A'_2$, we have $OA_2$ crosses $O'A'_2$. By the Ptolemy inequality, we have $|OA_2||O'A'_2|>|OA'_2||O'A_2|$, that is
$$|OA_2||OA'_1|>|OA'_2||OA_1|.$$
It follows that $\frac{|OA_2|}{|OA_1|}>\frac{|OA'_2|}{|OA'_1|},$ that is
$$r_l(x_i,y_i) > r_{l'}(x'_i,y'_i).$$

\begin{figure}[h]
\includegraphics{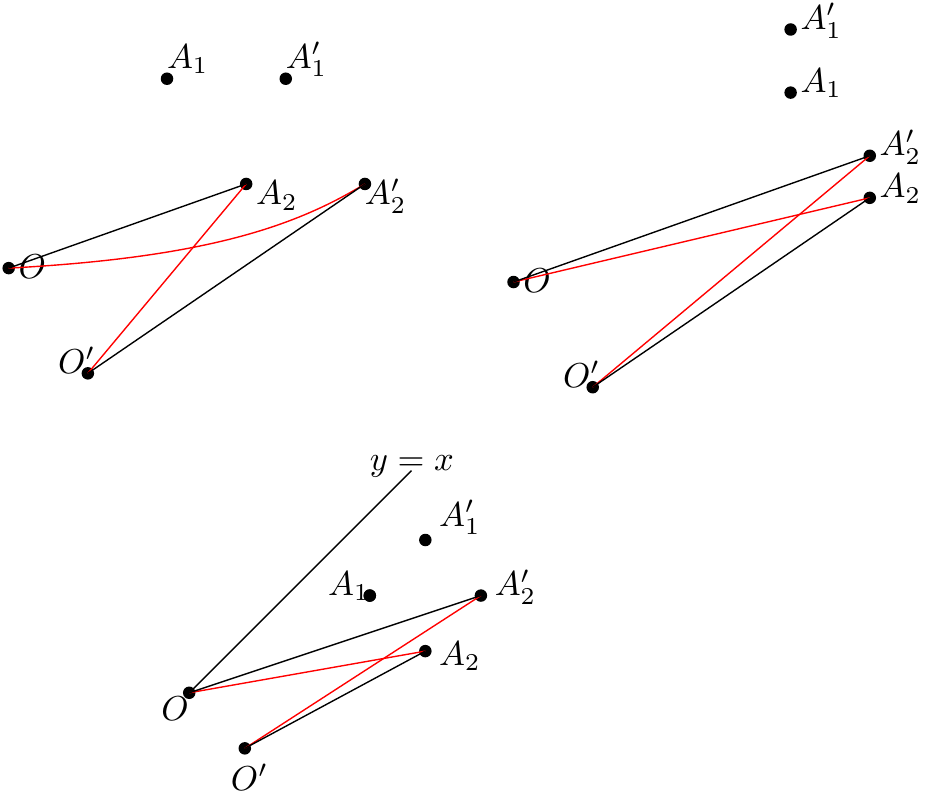}

{\rm Figure for Proposition \ref{prop-l2}}
\end{figure}

\end{proof}

Proposition \ref{prop-l2} implies the ratios along the lines with a given negative slope are increase with $x$, decrease with $y$.

The following follows immediately from Proposition \ref{prop-l2}.

\begin{corollary}\label{cor-r1}
With the foregoing notation. Let $l$ be a line with a negative slope. Then the sequence $r_{l[t]}(z_{l[t]},w_{l[t]})$, $t\in \mathbb Z_{>0}$ is a strictly decreasing sequence, where $(z_{l[t]},w_{l[t]})$ is given in Notation \ref{no-1}.
\end{corollary}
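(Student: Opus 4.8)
The plan is to deduce the corollary from Proposition \ref{prop-l2} by a single comparison of consecutive shifts, just as the sentence preceding the statement promises. The only genuine content is to pin down how the last two lattice points of $l[t]$ in $\{(q,p)\in\mathbb Z_{>0}^2\mid p<q\}$ move as $t$ increases; once that is in hand, the monotonicity is a one-line application of the proposition. So I would split the argument into (i) a bookkeeping step locating $(z_{l[t]},w_{l[t]})$ and $(z'_{l[t]},w'_{l[t]})$, and (ii) the comparison step.

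First I would record the effect of the horizontal shift on integral points. For $t\in\mathbb Z_{>0}$ the translation $(x,y)\mapsto(x+t,y)$ is a bijection from the integral points of $l$ onto those of $l[t]$, and it preserves the second coordinate. On a line of negative slope the last point (largest $x$, hence smallest $y$) in the region is exactly the point whose ordinate is the smallest positive value attained on the line, and this smallest positive ordinate is unaffected by a horizontal shift. Consequently the last two points of $l[t]$ are precisely the horizontal translates by $t$ of the last two points of $l$, i.e. $(z_{l[t]},w_{l[t]})=(z_l+t,w_l)$ and $(z'_{l[t]},w'_{l[t]})=(z'_l+t,w'_l)$ for all $t$. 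In particular every base point $(z_{l[t]},w_{l[t]})$ has the common ordinate $w_l$ while its abscissa $z_l+t$ strictly increases with $t$. I expect this step to be the main (indeed the only real) obstacle: one must check carefully that the identity of the ``last two points'' is genuinely preserved and that no integral point with a smaller positive ordinate enters the region as $t$ grows. This is the horizontal-shift analogue of Lemma \ref{lem-fl}(2), and for $l[t]$ it is cleaner because the shift does not move ordinates at all.

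With the base points identified, the corollary is immediate. Fix $t<t'$ in $\mathbb Z_{>0}$ and consider the two lines $l[t]$ and $l[t']$, both of the same negative slope, together with their base points $(z_l+t,w_l)$ on $l[t]$ and $(z_l+t',w_l)$ on $l[t']$. These two points have equal ordinate $w_l$ and satisfy $z_l+t<z_l+t'$, so they fall exactly under hypothesis (1) of Proposition \ref{prop-l2}. That proposition therefore yields a strict inequality between $r_{l[t]}(z_{l[t]},w_{l[t]})$ and $r_{l[t']}(z_{l[t']},w_{l[t']})$; taking $t'=t+1$ for every $t$ shows the sequence $r_{l[t]}(z_{l[t]},w_{l[t]})$, $t\in\mathbb Z_{>0}$, is strictly monotonic, with direction exactly the one supplied by Proposition \ref{prop-l2}(1) (the ratio on the line with smaller abscissa compared against that on the line with larger abscissa). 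I would make a point of double-checking the orientation in Proposition \ref{prop-l2}(1) against a small numerical instance when writing up, since the entire conclusion hinges on reading off that single strict comparison correctly.
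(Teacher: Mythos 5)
Your argument is exactly the paper's intended one: the paper offers no proof beyond the sentence ``the following follows immediately from Proposition \ref{prop-l2}'', and your bookkeeping step --- that the last two in-region points of $l[t]$ are $(z_l+t,w_l)$ and $(z'_l+t,w'_l)$ --- is precisely the content of Lemma \ref{lem-fl}(2), which (as your derivation implicitly shows) is really a statement about the horizontal shift $l[t]$ even though the paper prints it for $l\langle t\rangle$. Your justification of that step is careful and correct: horizontal integer shifts preserve ordinates of integral points, new integral points entering the region appear only near the diagonal, and the last point is the one of smallest positive ordinate.

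However, the orientation check that you postponed is not a formality --- it overturns the statement as printed. Proposition \ref{prop-l2}(1) says that for two points with \emph{equal} ordinate, the ratio at the \emph{larger} abscissa is strictly \emph{larger}. Applying it to $(z_l+t,w_l)$ on $l[t]$ and $(z_l+t',w_l)$ on $l[t']$ with $t<t'$ gives $r_{l[t]}(z_{l[t]},w_{l[t]})<r_{l[t']}(z_{l[t']},w_{l[t']})$, so your completed proof shows the sequence is strictly \emph{increasing}, not decreasing. This is not a flaw in your reasoning but an error in the corollary itself: the paper's own proof of Proposition \ref{prop-lim1} states ``By Proposition \ref{prop-l2} (1), the sequence $r_{l_n}(z_{l_n},w_{l_n})$, $n\in\mathbb Z_{>0}$ is strictly increasing'', and the family $l_n$ is exactly a horizontally shifted family $l_0[n]$, so the two assertions contradict each other. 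A numerical check settles the direction: for slope $k=-1$ the relevant ratios are $m_{3,1}/m_{2,2}=13/12<m_{4,1}/m_{3,2}=34/29<m_{5,1}/m_{4,2}=89/75<m_{6,1}/m_{5,2}=233/194$, increasing toward the limit $(5+\sqrt5)/6\approx 1.206$ predicted by Proposition \ref{prop-lim1}. So you should commit to the direction your own argument produces and record the corollary with ``strictly increasing''; the word ``decreasing'' in the statement cannot be derived from Proposition \ref{prop-l2} and is false.
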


\section{Monotonicity of the generalized Markov numbers}\label{sec-4}

\subsection{On the last ratio}

Recall that we denote by $(z_l, w_l), (z'_l, w'_l)$ the last two integral  points on $l$ for any line $l$ with a negative slope. In this subsection, we study the ratio $r_l(z_l,w_l)$.

For a given $k=-\frac{a_1}{a_2}\in \mathbb Q_{<0}$ with $a_1,a_2\in \mathbb Z_{>0}$ and $g.c.d.(a_1,a_2)=1$, denote by $l_n: y=k(x-n)+1, n\in \mathbb Z$ the sequence of lines through $(n,1)$ with slope $k$. The last two integral  points on $l_n$ are $(n-a_2,1+a_1)$ and $(n,1)$. For any line $l: y=kx+b$ with slope $k$, assume that $l\neq l_n$ for any $n\in \mathbb Z$, then there exists $N^+\in \mathbb Z$ such that $l$ lies between $l_{N^+-1}$ and $l_{N^+}$. Thus $z_l<N^+-1$. Note that to ensure there are at least two integral  points in $l\cap \{(x,y)\in \mathbb Z_{>0}^2\mid x>y\}$, we may assume that $N^+\geq 4$.

\begin{lemma}\label{lem-r1}
With the foregoing notation. We have
$$r_l(z_l,w_l)< r_{l_{N^+}}(z_{l_{N^+}},w_{l_{N^+}}).$$
\end{lemma}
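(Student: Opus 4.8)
The goal is to compare the last ratio $r_l(z_l,w_l)$ on an arbitrary line $l$ of slope $k$ with the corresponding last ratio on the reference line $l_{N^+}$, where $l$ lies between $l_{N^+-1}$ and $l_{N^+}$. The natural tool is Proposition \ref{prop-l2}, which tells us precisely how the ratio at an integral point changes as we move the line in the plane while keeping the slope fixed. The strategy is to relate the last point $(z_l,w_l)$ on $l$ to the last point $(z_{l_{N^+}},w_{l_{N^+}})=(N^+,1)$ on $l_{N^+}$ by one of the three comparison moves (horizontal, vertical, or diagonal shift) allowed in that proposition, and to track whether the $x$-coordinate increases or the $y$-coordinate increases, which controls the direction of the inequality.

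**Locating the last points.**
First I would pin down the last two integral points. By the convention $l_{N^+}$ passes through $(N^+,1)$ with its last point being $(z_{l_{N^+}},w_{l_{N^+}})=(N^+,1)$, the last point of $l_{N^+}$ has $y$-coordinate $1$. Since $l$ lies strictly between $l_{N^+-1}$ and $l_{N^+}$, its last integral point $(z_l,w_l)$ in the admissible region $\{x>y>0\}$ must have $w_l\geq 1$; moreover $z_l<N^+-1$ as noted in the text. The crux is to exhibit an auxiliary integral point $(x^\ast,y^\ast)$ that lies on a line of the same slope $k$ and is related to $(z_l,w_l)$ by one admissible shift, so that Proposition \ref{prop-l2} applies, and then to chain this to $(N^+,1)$. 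Because $l_{N^+}$ sits to the right of $l$ and its last point has the smaller $y$-value $w_{l_{N^+}}=1\leq w_l$, the comparison should fall under part (1) (same $y$, larger $x$ gives larger ratio) or part (3) (same $y-x$, larger $x$ gives \emph{smaller} ratio) of Proposition \ref{prop-l2}, applied with the inequalities oriented so that the strictly larger $x$-coordinate of $N^+$ forces $r_{l_{N^+}}(z_{l_{N^+}},w_{l_{N^+}})$ to dominate.

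**Executing the comparison.**
Concretely, I would first shift $l$ diagonally (via the operation $l\langle t\rangle$ and Lemma \ref{lem-fl}(2), which moves the last point as $(z_l,w_l)\mapsto(z_l+t,w_l)$, keeping $w_l$ fixed) until its last point sits at the same $y$-coordinate as $(N^+,1)$, i.e.\ push $w_l$ down to $1$ if $w_l>1$; here I would invoke Proposition \ref{prop-l2}(2) or (3) to record that this decreases the ratio in the right direction. Then, with both last points at $y=1$ and the $x$-coordinate of the reference point strictly larger, Proposition \ref{prop-l2}(1) yields the strict inequality $r_l(z_l,w_l)<r_{l_{N^+}}(z_{l_{N^+}},w_{l_{N^+}})$. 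The bookkeeping must ensure each intermediate line still meets the admissible region in at least two points, which is guaranteed by the assumption $N^+\geq 4$.

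**The main obstacle.**
The delicate step is not the Ptolemy inequality itself but the combinatorial geometry of \emph{which} admissible shift connects $(z_l,w_l)$ to $(N^+,1)$ and in which order, since the last point of a line with irrational-looking but rational slope $k=-a_1/a_2$ can have $w_l$ strictly larger than $1$, and a single application of Proposition \ref{prop-l2} may not suffice. I expect the hard part to be verifying that the chain of shifts lands exactly on the point $(N^+,1)$ while every inequality points the same way; in particular one must rule out the degenerate case where $(z_l,w_l)$ already coincides in $y$ with $(N^+,1)$ but the $x$-coordinates force the wrong comparison, which is precisely where the strict betweenness of $l$ relative to $l_{N^+-1}$ and $l_{N^+}$ (giving $z_l<N^+-1<N^+$) is essential.
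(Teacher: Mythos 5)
Your high-level plan --- deduce the lemma from Proposition \ref{prop-l2} by chaining $(z_l,w_l)$ to the corresponding point of $l_{N^+}$ through an intermediate point --- is viable and is genuinely different from the paper's proof, which never invokes Proposition \ref{prop-l2}: the paper applies the Ptolemy inequality once, directly, to the quadrilateral with diagonals $OA'_2$ and $O'A_2$, where $O'=(a_2,-a_1)$, $A_2=(z'_l,w'_l)$, $A'_2=(N^+,1)$, and uses the translation (parallelogram) identities $|O'A_2|=|OA_1|$, $|O'A'_2|=|OA'_1|$ to turn $|OA'_2||O'A_2|>|OA_2||O'A'_2|$ into the ratio inequality; this handles in one stroke the ``general position'' case (different $x$, different $y$, different $x-y$) that no single clause of Proposition \ref{prop-l2} covers. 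However, your execution has genuine gaps. First, the mechanism you propose for the first leg is self-contradictory: you cite Lemma \ref{lem-fl} to say the shift moves the last point as $(z_l,w_l)\mapsto(z_l+t,w_l)$, \emph{keeping $w_l$ fixed}, and in the same sentence ask this operation to ``push $w_l$ down to $1$''. By the very lemma you quote, no horizontal or diagonal shift of $l$ changes the $y$-coordinate of its last points, so this step can never be carried out; what is needed instead is to compare, via Proposition \ref{prop-l2}(2), the point $(z_l,w_l)$ on $l$ with the point directly \emph{below} it on a different line of slope $k$. Second, you misidentify the points carrying the ratios: by Notation \ref{no-1}, $(z_{l_{N^+}},w_{l_{N^+}})$ is the second-to-last point $(N^+-a_2,1+a_1)$, not $(N^+,1)$; the point $(N^+,1)$ has no successor in the region, so no ratio is attached to it, and a chain ``landing exactly on $(N^+,1)$'' is not meaningful. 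Third, you explicitly leave the choice of intermediate point and the orientation of each inequality --- which is the actual content of the argument --- as ``the hard part,'' unresolved.

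For the record, your route can be completed once these are repaired. Since $l\neq l_n$ for all $n$, the last point of $l$ has $w'_l>1$, hence $w_l=w'_l+a_1>1+a_1$; taking the intermediate point $(z_l,1+a_1)$, which is the second-to-last point of the line $l_{z_l+a_2}$, Proposition \ref{prop-l2}(2) (same $x$, lower point has the larger ratio) gives $r_l(z_l,w_l)<r_{l_{z_l+a_2}}(z_l,1+a_1)$. Moreover $z'_l\le N^+-1$ forces $z_l=z'_l-a_2<N^+-a_2$, so Proposition \ref{prop-l2}(1) (same $y$, right-hand point has the larger ratio) gives $r_{l_{z_l+a_2}}(z_l,1+a_1)<r_{l_{N^+}}(N^+-a_2,1+a_1)=r_{l_{N^+}}(z_{l_{N^+}},w_{l_{N^+}})$, and the two strict inequalities concatenate to the claim. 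As submitted, though --- with the contradictory shifting mechanism, the wrong target point, and the key combinatorial step left open --- the proposal does not constitute a proof.
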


\begin{proof}
Let $O=(0,0), A_1=(z_l,w_l), A_2=(z'_l,w'_l), A_1'=(N^+-a_2,1+a_1), A_2'=(N^+,1)$. As the slopes of $l$ and $l_n$ are $k$, $A_1,A_2,A'_2$ and $A'_1$ form a parallelogram. Let $O'=(a_2,-a_1)$.

As $z_l,a_2<N^+$ and $w_l>1$, we see that $OA_2'$ crosses $O'A_2$. By the Ptolemy inequality, we have $|OA'_2||O'A_2|>|OA_2||O'A_2'|$, that is
$$|OA'_2||OA_1|>|OA_2||OA'_1|.$$
It follows that $\frac{|OA'_2|}{|OA'_1|}>\frac{|OA_2|}{|OA_1|},$ that is
$$r_l(z_l,w_l)< r_{l_{N^+}}(z_{l_{N^+}},w_{l_{N^+}}).$$

\begin{figure}[h]
\includegraphics{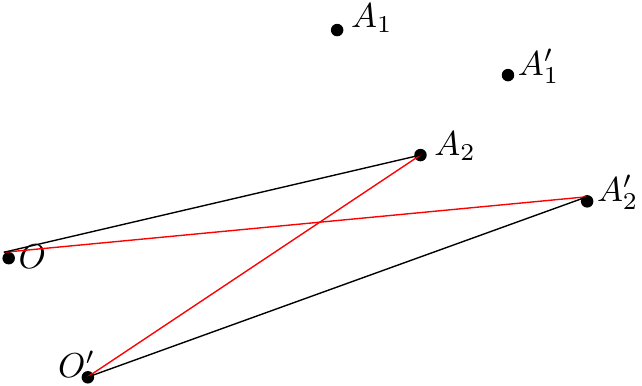}

{\rm Figure for Lemma \ref{lem-r1}}
\end{figure}

\end{proof}

\begin{lemma}\label{lem-r2}
Let $l$ be a line with negative slope $k=-\frac{a_1}{a_2}$ with $a_1,a_2\in \mathbb Z_{>0}$ and $g.c.d.(a_1,a_2)=1$. For a larger enough $t\in \mathbb Z_{>0}$, there exists $N_-\in \mathbb N$ such that
$$r_{l[t]}(z_{l[t]},w_{l[t]})> r_{l_{N^-}}(z_{l_{N^-}},w_{l_{N^-}}),$$
moreover, $N^-$ can be chosen such that $lim_{t\to +\infty} N^-=+\infty$.
\end{lemma}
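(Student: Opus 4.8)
The plan is to deduce the lemma from a single application of Proposition \ref{prop-l2}, by choosing $N^-$ so that the second-to-last points of $l[t]$ and of $l_{N^-}$ sit on a common vertical line. The whole point is that once the comparison is set up this way, the forced value of $N^-$ tends to infinity with $t$ for free, so the delicate-looking clause ``$\lim_{t\to+\infty}N^-=+\infty$'' is built into the construction rather than extracted from an asymptotic estimate.

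First I would record the last two points of $l[t]$. As $l[t]$ is the horizontal shift of $l$ by $t$, and consecutive lattice points of a line of slope $-a_1/a_2$ with $\gcd(a_1,a_2)=1$ differ by $(a_2,-a_1)$, these are $(z_{l[t]},w_{l[t]})=(z_l+t,w_l)$ and $(z'_{l[t]},w'_{l[t]})=(z_l+t+a_2,w_l-a_1)$, with $w_l-a_1=w'_l\geq 1$; in particular the second-to-last point of $l[t]$ has height $w_l=w'_l+a_1\geq 1+a_1$. I then set
$$N^-:=z_{l[t]}+a_2=z_l+t+a_2,$$
so that the last two points of $l_{N^-}$ are $(N^--a_2,1+a_1)=(z_l+t,1+a_1)$ and $(N^-,1)$. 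Thus the second-to-last point of $l_{N^-}$ has the same $x$-coordinate $z_l+t$ as the second-to-last point $(z_l+t,w_l)$ of $l[t]$, but height $1+a_1\le w_l$. For $t$ large enough that both lines genuinely carry at least two integral points of $\{(x,y)\in\mathbb Z_{>0}^2\mid x>y\}$ (which holds as soon as $z_l+t>w_l$ and $N^->1+a_1+a_2$), Proposition \ref{prop-l2}(2), applied to these two second-to-last points — equal $x$-coordinate, heights $1+a_1\le w_l$ — yields precisely the comparison between $r_{l_{N^-}}(z_{l_{N^-}},w_{l_{N^-}})$ and $r_{l[t]}(z_{l[t]},w_{l[t]})$ asserted in the lemma. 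Since $N^-=z_l+t+a_2\to+\infty$ as $t\to+\infty$, this is the required $N^-$.

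It then remains to dispose of the boundary case $w'_l=1$, i.e. $w_l=1+a_1$, in which the two second-to-last points coincide and Proposition \ref{prop-l2}(2) degenerates into an equality. Here $l[t]$ is literally one of the lines $l_n$, namely $l[t]=l_{z'_l+t}$, so $r_{l[t]}=r_{l_{z'_l+t}}$, and I would upgrade the equality to the strict comparison by invoking the strict monotonicity of the sequence $r_{l_n}(z_{l_n},w_{l_n})$ in $n$ (Corollary \ref{cor-r1}, which itself comes from Proposition \ref{prop-l2}(1)), shifting $N^-$ by one; the shifted index still goes to $+\infty$.

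I do not expect a serious conceptual obstacle with this route: essentially everything is a citation of Proposition \ref{prop-l2}, and the only real work is the bookkeeping that certifies, for large $t$, that the four named points are the genuine last two points of their respective lines inside $\{x>y\}$, together with the isolation of the degenerate case $w'_l=1$. Where the difficulty \emph{would} concentrate is in the alternative, more literal reading of the statement as an asymptotic fact: there one would instead show that both $r_{l[t]}(z_{l[t]},w_{l[t]})$ and $r_{l_n}(z_{l_n},w_{l_n})$ converge, using Corollary \ref{cor-h1} (horizontal ratios $\to\phi^2$) and Corollary \ref{cor-v1} together with Lemma \ref{lem-v1} (the vertical ratios $v(\,\cdot\,,j)$ converge to limits $\bar v_j$ that are monotone in $j$), and then compare the two limiting products over the $a_1$ vertical steps to determine their order and force $N^-\to\infty$. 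The vertical-ratio limits $\bar v_j$ are not pinned down by the interval bounds of Corollary \ref{cor-v1} alone and would require a separate monotone-convergence argument; aligning the $x$-coordinates as above is exactly what lets me avoid computing them.
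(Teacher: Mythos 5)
There is a genuine gap, and it sits at the very center of your construction: you have applied Proposition \ref{prop-l2}(2) in the wrong direction. With your choice $N^-=z_{l[t]}+a_2=z_l+t+a_2$, the second-to-last point of $l_{N^-}$ is $(z_l+t,\,1+a_1)$ and the second-to-last point of $l[t]$ is $(z_l+t,\,w_l)$, where $w_l=w'_l+a_1\geq 1+a_1$. Proposition \ref{prop-l2}(2) says that, of two points with the same $x$-coordinate, the line through the \emph{lower} point has the \emph{larger} ratio (in its notation, $(x_i,y_i)$ lies on $l$ and $y_i<y'_i$ implies $r_l(x_i,y_i)>r_{l'}(x'_i,y'_i)$; equivalently, the ratios decrease with $y$). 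Here the lower point is the one on $l_{N^-}$, so in the generic case $w_l>1+a_1$ the proposition gives $r_{l_{N^-}}(z_{l_{N^-}},w_{l_{N^-}})>r_{l[t]}(z_{l[t]},w_{l[t]})$ --- exactly the reverse of what Lemma \ref{lem-r2} asserts. This is not a repairable citation slip: for your choice of $N^-$ the desired inequality is genuinely false (Proposition \ref{prop-l2}(2) proves its negation), and no ``same-$x$'' alignment can ever work, because the second-to-last points of the lines $l_n$ sit at the minimal possible height $1+a_1\leq w_l$, hence are always the lower ones. The only case your argument actually covers is the degenerate one $w'_l=1$, where $l[t]$ coincides with some $l_n$.

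The reason the lemma is nevertheless true is that $N^-$ must be taken much \emph{smaller} than $t$, not comparable to it; this is what the paper's proof does. One applies the Ptolemy inequality to $O=(0,0)$, $O'=(a_2,-a_1)$, the last point $A_2[t]$ of $l[t]$ (at height $w'_l$) and the last point $A'_2=(N^-,1)$ of $l_{N^-}$: the inequality produces $r_{l[t]}(z_{l[t]},w_{l[t]})>r_{l_{N^-}}(z_{l_{N^-}},w_{l_{N^-}})$ precisely when the segment $OA_2[t]$ crosses $O'A'_2$, and that crossing requires $(N^-,1)$ to lie \emph{below} the ray from $O$ through $A_2[t]$, i.e.\ roughly $N^-<(z'_l+t)/w'_l$. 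Since the slope of that ray tends to $0$ as $t\to\infty$, such an $N^-$ exists for all large $t$ and can still be chosen with $N^-\to+\infty$; the ``moreover'' clause thus comes out of the geometry of the crossing condition (the intersection of $O'A_2[t]$ with the line $y=1$ runs off to infinity), not for free from index bookkeeping. So the admissible $N^-$ are constrained from \emph{above}, and your construction violates that constraint rather than exploiting it.
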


\begin{proof}
Let $O=(0,0), O'=(a_2,-a_1), A_1=(z_l,w_l), A_2=(z'_l,w'_l)$. For any $t\in \mathbb Z_{>0}$, denote $A_1[t]=(u_l+t,v_l)$ and $A_2[t]=(u'_l+t,v'_l)$. According to Lemma \ref{lem-fl}, $A_1[t]$ and $A_2[t]$ are the last two integral  points on $l[t]$. We see that $A_2[t]$ lies on the line $y=w'_l$. On the other hand, the last integral  points on $l_n, n\in \mathbb Z_{>0}$ lies on the line $y=1$.

Note that if $t$ approximates to $+\infty$, the $x$-coordinate of the crossing point of the segment $O'A_2[t]$ and the line $y=1$ approximate to $+\infty$. Therefore, for larger enough $t$, we can find $N_-\in \mathbb N$ such that $OA_2[t]$ crosses $O'A'_2$, where $A'_2$ is the last integral  points on $l_{N_-}$.

By the Ptolemy inequality, we have $|OA_2[t]||O'A'_2|>|OA'_2||O'A_2[t]|$, that is
$$|OA_2[t]||OA'_1|>|OA'_2||OA_1[t]|.$$
It follows that $\frac{|OA_2[t]|}{|OA_1[t]|}>\frac{|OA'_2|}{|OA'_1|},$ that is
$$r_{l[t]}(z_{l[t]},w_{l[t]})> r_{l_{N^-}}(z_{l_{N^-}},w_{l_{N^-}}).$$

Moreover, when $t$ approximates to $+\infty$, the $x$-coordinate of the crossing point of the segment $O'A_2[t]$ and the line $y=1$ approximates to $+\infty$, so we can choose $N_-\in \mathbb N$ such that $N_-$ approximates to $+\infty$.

\begin{figure}[h]
\includegraphics{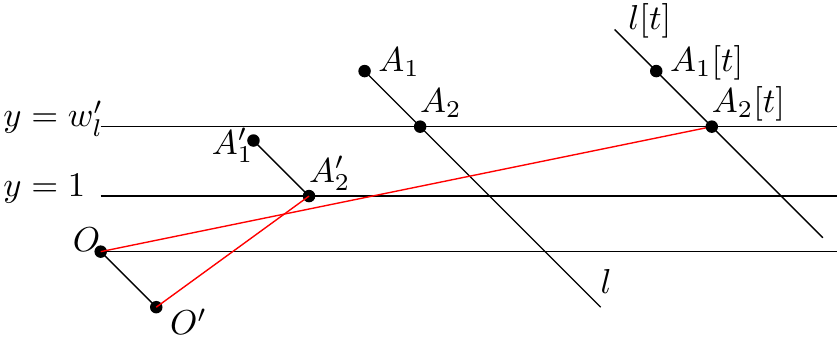}

{\rm Figure for Lemma \ref{lem-r2}}
\end{figure}

\end{proof}

From the proof of Lemma \ref{lem-r2}, we have the following observation.

\begin{lemma}\label{lem-r5}
Assume that $k=-\frac{a_1}{a_2}$ with $a_1,a_2\in \mathbb Z_{>0}$ and $g.c.d.(a_1,a_2)=1$. Let $\gamma:y=kx$ be the line passing through $O$ with slope $k$. Suppose that $n>1+a_1+a_2$
\begin{enumerate}[$(1)$]
  \item Let $l$ be a line with slope $k$. Then there exists $\delta(n,l)\in \mathbb R$ such that
        $$r_{l[t]}(z_{l[t]},w_{l[t]})> r_{l_n}(z_{l_{n}},w_{l_{n}})$$
        for all $t\in \mathbb Z_{>0}$ such that $l[t]$ lies on the right side of $\gamma[\delta(n,l)]$.
  \item Let $l'$ be another line with slope $k$. If $w'_l<w'_{l'}$ then $\gamma[\delta(n,l)]<\gamma[\delta(n,l')]$.
  \item There exists $\delta(n)\in \mathbb R$ such that for any line $l$ with slope $k$,
        $$r_{l[t]}(z_{l[t]},w_{l[t]})> r_{l_n}(z_{l_{n}},w_{l_{n}})$$
        for all $t\in \mathbb Z_{>0}$ such that $l[t]$ lies on the right side of $\gamma[\delta(n)]$.
\end{enumerate}
\end{lemma}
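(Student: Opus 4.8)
The plan is to reopen the Ptolemy crossing argument used in the proof of Lemma~\ref{lem-r2}, but with the reference line fixed to $l_n$, and to read off from it the exact position at which the crossing first occurs. Keep the notation $O=(0,0)$ and $O'=(a_2,-a_1)$, write $A_1[t]=(z_l+t,w_l)$ and $A_2[t]=(z'_l+t,w'_l)$ for the last two integral points of $l[t]$ (the $y$-coordinates $w_l,w'_l$ are unchanged under horizontal shift by Lemma~\ref{lem-fl}), and let $A'_1=(n-a_2,1+a_1)$, $A'_2=(n,1)$ be the last two integral points of $l_n$. Since $A'_2-O'=A'_1$ and $A_2[t]-O'=A_1[t]$, the translation invariance of the Markov distance (as used in the proof of Lemma~\ref{lem-r2}) gives $|O'A'_2|=|OA'_1|$ and $|O'A_2[t]|=|OA_1[t]|$; hence, exactly as in Lemma~\ref{lem-r2}, whenever the segment $OA_2[t]$ crosses $O'A'_2$ the Ptolemy inequality yields $r_{l[t]}(z_{l[t]},w_{l[t]})>r_{l_n}(z_{l_n},w_{l_n})$. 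So the entire lemma reduces to controlling when this crossing happens.

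For (1): the line through $O$ and $A_2[t]$ has positive slope $w'_l/(z'_l+t)$, which strictly decreases in $t$. The endpoint $O'=(a_2,-a_1)$ always lies below this line, while $A'_2=(n,1)$ lies above it precisely when the slope is $<1/n$, i.e.\ when $z'_l+t>nw'_l$. Thus $OA_2[t]$ crosses $O'A'_2$ exactly when $l[t]$ has been shifted far enough to the right, and the threshold line---the slope-$k$ line whose last integral point sits at the position $(nw'_l,w'_l)$---is what I take for $\gamma[\delta(n,l)]$. Then the crossing, hence the desired inequality, holds iff $l[t]$ lies to the right of $\gamma[\delta(n,l)]$.

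The point of phrasing the threshold through $\gamma[\delta]$ is that, although the crossing condition $z'_l+t>nw'_l$ involves both coordinates of the last point, the threshold \emph{position} is pinned by the single point $(nw'_l,w'_l)$. Solving $w'_l=k\!\left(nw'_l-\delta(n,l)\right)$ gives $\delta(n,l)=w'_l\!\left(n-\tfrac1k\right)=w'_l\!\left(n+\tfrac{a_2}{a_1}\right)$, which depends on $l$ only through $w'_l$ and is strictly increasing in it; this proves (2). For (3), note that consecutive integral points on a slope-$k$ line differ in $y$ by $a_1$ and the last point carries the smallest positive $y$-coordinate, so $1\le w'_l\le a_1$ for every $l$. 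Hence $\delta(n,l)\le a_1 n+a_2=:\delta(n)$, and since $\gamma[\delta(n)]$ lies to the right of every $\gamma[\delta(n,l)]$, any $l[t]$ to the right of $\gamma[\delta(n)]$ is a fortiori to the right of $\gamma[\delta(n,l)]$; (1) then gives the conclusion uniformly.

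The main obstacle is not the Ptolemy step, which is copied verbatim from Lemma~\ref{lem-r2}, but the translation of the crossing inequality $z'_l+t>nw'_l$ into a threshold on the position of $l[t]$ that is independent of $z'_l$; recognizing that this threshold line is determined by the single point $(nw'_l,w'_l)$ is exactly what makes (2) and the uniform bound in (3) go through. The remaining care is in checking that $w'_l$ is genuinely invariant under the horizontal shifts $l\mapsto l[t]$ and bounded by $a_1$, for which the hypothesis $n>1+a_1+a_2$ (guaranteeing $l_n$ really has two integral points in $\{x>y>0\}$) is used.
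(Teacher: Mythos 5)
Your proof is correct and follows essentially the same route as the paper: the Ptolemy crossing step is imported from Lemma \ref{lem-r2}, and the paper likewise defines $\gamma[\delta(n,l)]$ as the slope-$k$ line through the point where the line $OA'_2$ meets $y=w'_l$ (your point $(nw'_l,w'_l)$), so that $\delta(n,l)$ depends only on $w'_l$ and increases with it, with (3) obtained by bounding $w'_l$ and taking the extreme case. If anything, your version is tighter than the paper's: your explicit formula $\delta(n,l)=w'_l\left(n+\tfrac{a_2}{a_1}\right)$ and your bound $w'_l\le a_1$ (coming from the $y$-gap $a_1$ between consecutive integral points) correct what appear to be typos in the paper's proof, which writes $A'_1=(n-a_1,a_2+1)$ and asserts $w'_l\le a_2$.
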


\begin{proof}
(1) Let $O=(0,0), O'=(a_2,-a_1), A'_1=(n-a_1,a_2+1), A'_2=(n,1)$. Assume that the line connecting $O$ and $A'_2$ crosses $y=w'_l$ at some point $A$. Then the line through $A$ with slope $k$ equals to $l[\delta(n,l)]$ for some $\delta(n,l)\in \mathbb R$. Thus, for any $t\in \mathbb Z_{>0}$ such that $l[t]$ lies on the right side of $\gamma[\delta(n,l)]$, we have $OA_2[t]$ crosses $O'A'_2$, where $A_2[t]$ is the last integral  points on $l[t]$. From the proof of Lemma \ref{lem-r5}, we have
$$r_{l[t]}(z_{l[t]},w_{l[t]})> r_{l_n}(z_{l_{n}},w_{l_{n}}).$$

(2) It follows immediately from the construction of $\delta(n,l)$.

(3) From the construction of $\delta(n,l)$, we see that $\delta(n,l)$ only depends on $w'_l$. Note that for any $l$ we have $w'_l\leq a_2$ as $(z'_l,w'_l)$ is the last integral point. Thus we may let $\delta(n)=\delta(n,l)$, where $l$ is the line with slope $k$ such that $w'_l$ is maximal.

\begin{figure}[h]
\includegraphics{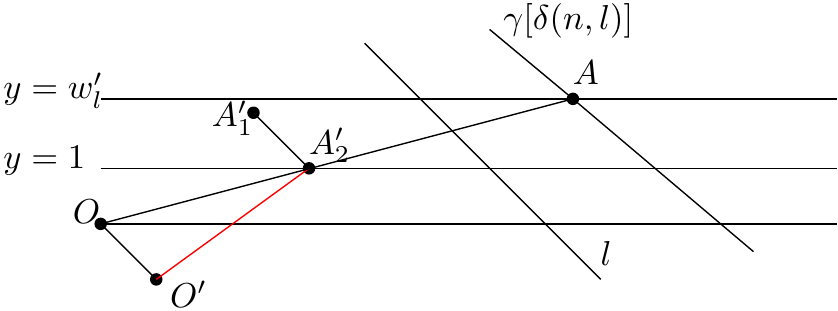}

{\rm Figure for Lemma \ref{lem-r5}}
\end{figure}

\end{proof}

As a corollary of Lemmas \ref{lem-r1} and \ref{lem-r2}, we have the following.

\begin{corollary}\label{cor-r1}
Let $k\in \mathbb Q_{<0}$ and $l:y=kx+b$ be a line with slope $k$. Let $l_n: y=k(x-n)+1$ be the lines through $(n,1)$ with slope $k$. Then
$$lim_{t\to \infty} r_{l[t]}(z_{l[t]},w_{l[t]})=lim_{n\to \infty} r_{l_n}(z_{l_n},w_{l_n}).$$
\end{corollary}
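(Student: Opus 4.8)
The plan is to sandwich the quantity $r_{l[t]}(z_{l[t]},w_{l[t]})$ between two terms of the sequence $\{r_{l_n}(z_{l_n},w_{l_n})\}_n$ whose indices tend to infinity, and then to apply a squeeze argument. First I would record that the right-hand limit exists. Since $l_n=l_0[n]$ is the $x$-shift of the fixed line $l_0:y=kx+1$, consecutive second-to-last points $(z_{l_n},w_{l_n})$ share the same $y$-coordinate and have strictly increasing $x$-coordinate in $n$; hence Proposition \ref{prop-l2}(1) applies to consecutive terms, so $n\mapsto r_{l_n}(z_{l_n},w_{l_n})$ is monotone and therefore converges to some $M\in[0,+\infty]$. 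Consequently every subsequence $r_{l_{N(t)}}(z_{l_{N(t)}},w_{l_{N(t)}})$ with $N(t)\to\infty$ also converges to $M$.

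Next I would produce the upper bound by applying Lemma \ref{lem-r1} with $l[t]$ in place of $l$: for each $t$ there is an integer $N^+(t)$ with $l[t]$ lying between $l_{N^+(t)-1}$ and $l_{N^+(t)}$, and
$$r_{l[t]}(z_{l[t]},w_{l[t]})< r_{l_{N^+(t)}}(z_{l_{N^+(t)}},w_{l_{N^+(t)}}).$$
(If $l[t]$ coincides with some $l_N$ the two sides are equal, which is harmless.) Because an $x$-shift moves $l[t]$ arbitrarily far to the right as $t\to\infty$ — its intersection with $y=1$ has $x$-coordinate growing like $t$, whereas $l_n$ meets $y=1$ at $x=n$ — we obtain $N^+(t)\to\infty$. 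For the lower bound I would invoke Lemma \ref{lem-r2} directly: for all sufficiently large $t$ there is $N^-(t)\in\mathbb N$ with
$$r_{l[t]}(z_{l[t]},w_{l[t]})> r_{l_{N^-(t)}}(z_{l_{N^-(t)}},w_{l_{N^-(t)}}),$$
and, by the supplementary assertion of that lemma, $N^-(t)\to\infty$ as $t\to\infty$.

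Combining the two bounds, for all large $t$ we have
$$r_{l_{N^-(t)}}(z_{l_{N^-(t)}},w_{l_{N^-(t)}})< r_{l[t]}(z_{l[t]},w_{l[t]})< r_{l_{N^+(t)}}(z_{l_{N^+(t)}},w_{l_{N^+(t)}}).$$
Since $N^\pm(t)\to\infty$, both flanking sequences converge to $M$ by the first paragraph, so the squeeze theorem (valid in the extended reals, covering the case $M=+\infty$) yields $\lim_{t\to\infty}r_{l[t]}(z_{l[t]},w_{l[t]})=M=\lim_{n\to\infty}r_{l_n}(z_{l_n},w_{l_n})$, which is the claim.

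The genuine content of this corollary is already packaged in Lemmas \ref{lem-r1} and \ref{lem-r2}, so given those the argument is a routine limit comparison rather than a hard step. The only points that require care are the existence of $M$, which I extract from the monotonicity provided by Proposition \ref{prop-l2}(1), and the divergence of both auxiliary indices $N^+(t)$ and $N^-(t)$, which is what guarantees that the two flanking subsequences actually share the limit $M$. The divergence of $N^-(t)$ is precisely the extra conclusion of Lemma \ref{lem-r2}, while the divergence of $N^+(t)$ is the elementary observation that shifting along the $x$-axis eventually pushes $l[t]$ past every $l_n$; I expect this bookkeeping, not any new inequality, to be the main thing to verify cleanly.
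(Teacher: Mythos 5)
Your proposal is correct and is essentially the paper's own argument: the paper states this corollary as an immediate consequence of Lemmas \ref{lem-r1} and \ref{lem-r2}, i.e.\ exactly the sandwich $r_{l_{N^-(t)}}(z_{l_{N^-(t)}},w_{l_{N^-(t)}})<r_{l[t]}(z_{l[t]},w_{l[t]})<r_{l_{N^+(t)}}(z_{l_{N^+(t)}},w_{l_{N^+(t)}})$ with both indices diverging, combined with the monotonicity of $n\mapsto r_{l_n}(z_{l_n},w_{l_n})$ coming from Proposition \ref{prop-l2}(1). Your additional bookkeeping (existence of the limit $M$, the harmless case $l[t]=l_N$, and the squeeze in the extended reals) only makes explicit what the paper leaves implicit.
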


\begin{proposition}\label{prop-lim1}
With the foregoing notation. Assume that $k=-\frac{a_1}{a_2}\in \mathbb Q_{<0}$ with $a_1,a_2\in \mathbb Z_{>0}$. Let $l$ be a line with slope $k$. Then
\begin{enumerate}[$(1)$]
  \item $$lim_{n\to \infty} r_{l_n}(z_{l_n},w_{l_n})=(\frac{2\sqrt 5}{3(1+\sqrt 5)})^{a_1}(\frac{1+\sqrt 5}{2})^{2a_2}.$$
  \item $$lim_{t\to \infty} r_{l[t]}(z_{l[t]},w_{l[t]})=(\frac{2\sqrt 5}{3(1+\sqrt 5)})^{a_1}(\frac{1+\sqrt 5}{2})^{2a_2}.$$
\end{enumerate}

\end{proposition}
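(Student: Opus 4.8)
The plan is to reduce both statements to the single fixed-height asymptotic
$$m_{N,p}\sim C_p\,\phi^{2N}\quad(N\to\infty,\ p\text{ fixed}),\qquad C_p=\tfrac13\Bigl(\tfrac{3\phi}{\sqrt5}\Bigr)^{p}\ \ (p\ge 1),$$
where $\phi=\frac{\sqrt5+1}{2}$. Granting this, part (1) is immediate: the last two integral points of $l_n$ are $(n-a_2,1+a_1)$ and $(n,1)$, so
$$r_{l_n}(z_{l_n},w_{l_n})=\frac{m_{n,1}}{m_{n-a_2,1+a_1}}\sim\frac{C_1\,\phi^{2n}}{C_{1+a_1}\,\phi^{2(n-a_2)}}=\frac{C_1}{C_{1+a_1}}\,\phi^{2a_2}=\Bigl(\frac{\sqrt5}{3\phi}\Bigr)^{a_1}\phi^{2a_2},$$
and rewriting $\frac{\sqrt5}{3\phi}=\frac{2\sqrt5}{3(1+\sqrt5)}$, $\phi=\frac{1+\sqrt5}{2}$ gives the stated value. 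Part (2) then follows at once from Corollary \ref{cor-r1}, which identifies $\lim_{t\to\infty}r_{l[t]}(z_{l[t]},w_{l[t]})$ with $\lim_{n\to\infty}r_{l_n}(z_{l_n},w_{l_n})$.

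So the whole proof rests on the fixed-height asymptotic, which I would establish in two moves. First, \emph{convergence} of $m_{N,p}/\phi^{2N}$: for fixed $p\ge 1$ and $N\ge p$, Lemma \ref{lem-h1}(2) gives $h(N,p)\le h(N,1)=\mathcal F_{2N+3}/\mathcal F_{2N+1}<\phi^2$, so the ratio of consecutive terms $\frac{m_{N+1,p}/\phi^{2(N+1)}}{m_{N,p}/\phi^{2N}}=h(N,p)/\phi^2$ is $<1$; hence $m_{N,p}/\phi^{2N}$ is eventually decreasing. It is bounded below since $m_{N,p}\ge m_{N,1}=\mathcal F_{2N+1}$ (Corollary \ref{cor-v1}) and $\mathcal F_{2N+1}/\phi^{2N}\to \phi/\sqrt5>0$. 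Therefore $m_{N,p}/\phi^{2N}$ converges to some $C_p>0$.

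Second, I would \emph{evaluate} $C_p$ along the subsequence $N=qp$ using the radial asymptotic of Proposition \ref{prop-app}(1): as $q\to\infty$,
$$\frac{m_{qp,p}}{\phi^{2qp}}\sim\frac{3^{p-1}m_{q,1}^{\,p}}{\phi^{2qp}}=3^{p-1}\Bigl(\frac{m_{q,1}}{\phi^{2q}}\Bigr)^{p}\longrightarrow 3^{p-1}\Bigl(\frac{\phi}{\sqrt5}\Bigr)^{p}=\frac13\Bigl(\frac{3\phi}{\sqrt5}\Bigr)^{p},$$
using $m_{q,1}=\mathcal F_{2q+1}\sim\phi^{2q+1}/\sqrt5$. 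Since the full limit $C_p$ exists by the first move, it must equal this subsequential value; the case $p=1$ gives $C_1=\phi/\sqrt5$, consistent with $m_{N,1}=\mathcal F_{2N+1}$.

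The main obstacle is the interlocking of these two steps. Proposition \ref{prop-app}(1) only controls the points $(qp,p)$ on the ray of slope $1/p$, whereas the ratio $r_{l_n}$ forces the height-$p$ argument $N=n-a_2$ to run through \emph{all} integers. The monotonicity-plus-boundedness argument is exactly what upgrades the radial (subsequence) asymptotic to a genuine fixed-height asymptotic valid along every sequence $N\to\infty$; without it one only knows the limit of $m_{N,p}/\phi^{2N}$ along multiples of $p$. Everything else — extracting the last two points of $l_n$, the cancellation of the $\phi^{2n}$ factors, and the passage from $l_n$ to $l[t]$ — is routine.
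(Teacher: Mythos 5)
Your proposal is correct, but it reaches the limit by a genuinely different route than the paper. Both proofs must upgrade a subsequential limit (the only thing Proposition \ref{prop-app} provides, since it controls $m_{qn,n}$ only along multiples) to a limit over all $n$, and you and the paper do this with different monotonicity devices. The paper writes $r_{l_n}(z_{l_n},w_{l_n})=m_{n,1}/m_{n-a_2,1+a_1}$, invokes Proposition \ref{prop-l2}(1) (a Ptolemy-type comparison) to see that this ratio sequence is strictly increasing in $n$, and then evaluates the limit along the single subsequence $n=q(1+a_1)+a_2$, chosen precisely so that the denominator becomes $m_{q(1+a_1),1+a_1}$, a point on a ray where Proposition \ref{prop-app} applies; monotonicity of the ratios then forces the whole sequence to converge to that value. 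You instead make the normalized numbers $m_{N,p}/\phi^{2N}$ monotone: Lemma \ref{lem-h1}(2) plus the Fibonacci bound $h(N,1)<\phi^2$ (which is exactly the upper estimate in Corollary \ref{cor-h1}) shows this sequence decreases in $N$, Corollary \ref{cor-v1} bounds it below, and Proposition \ref{prop-app} along $N=qp$ identifies the limit $C_p=\tfrac13(3\phi/\sqrt5)^p$. This yields the stronger intermediate statement $m_{N,p}\sim C_p\phi^{2N}$ for every fixed height $p$, after which the ratio limit is a one-line cancellation, with no monotonicity of the ratios themselves needed. What each approach buys: the paper's argument is shorter and stays entirely inside its Ptolemy machinery, never needing asymptotics of $m_{N,p}$ at general $N$; yours proves a reusable fixed-height asymptotic with explicit constants, which also immediately gives that the last ratios exceed their limit is approached monotonically from below only if one re-adds Proposition \ref{prop-l2}, and whose Pell-number analogue (fixed $x-y$ asymptotics) would be the natural parallel tool for Proposition \ref{prop-lim2}. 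Your treatment of part (2) via Corollary \ref{cor-r1} is identical to the paper's. One small point of care in your write-up: the chains $h(N,p)\leq h(N,1)$ and $m_{N,1}\leq m_{N,p}$ require $N\geq p$ at each application of Lemma \ref{lem-h1}(2) and Corollary \ref{cor-v1}, which is harmless for the limit but should be said when asserting the sequence is ``eventually decreasing.''
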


\begin{proof}
We shall only prove (1) as (2) follows by Corollary \ref{cor-r1}.

For each $n$, we have $(z_{l_n},w_{l_n})=(n-a_2,1+a_1)$ and $(z'_{l_n},w'_{l_n})=(n,1)$. By Proposition \ref{prop-l2} (1), the sequence $r_{l_n}(z_{l_n},w_{l_n}),n\in \mathbb Z_{>0}$ is strictly increasing. It suffices to consider the subsequence indexed by $q(1+a_1)+a_2, q\in \mathbb Z_{>0}$. By Proposition \ref{prop-app}, we have
$$\begin{array}{rcl}
&& lim_{q\to \infty} r_{l_{q(1+a_1)+a_2}}(z_{l_{q(1+a_1)+a_2}},w_{l_{q(1+a_1)+a_2}})\vspace{1.5mm}\\
&=&\frac{m_{q(1+a_1)+a_2,1}}{m_{q(1+a_1),1+a_1}}\vspace{1.5mm}\\
&=&
\frac{\frac{((1+\sqrt{5})/2)^{2q(1+a_1)+2a_2+1}}{\sqrt{5}}}{3^{a_1}m_{q,1}^{1+a_1}} \vspace{1.5mm}\\
&= &
\frac{\frac{((1+\sqrt{5})/2)^{2q(1+a_1)+2a_2+1}}{\sqrt{5}}}{3^{a_1}(\frac{((1+\sqrt{5})/2)^{2q+1}}{\sqrt{5}})^{1+a_1}}\vspace{1.5mm}\\
&= & (\frac{\sqrt 5}{3})^{a_1}(\frac{1+\sqrt 5}{2})^{-a_1+2a_2}\vspace{1.5mm}\\
&= & (\frac{2\sqrt 5}{3(1+\sqrt 5)})^{a_1}(\frac{1+\sqrt 5}{2})^{2a_2}.
\end{array}$$

\end{proof}

\subsection{On the first ratio}
Recall that we denote by $(u_l, v_l), (u'_l, v'_l)$ the first two integral  points on $l$ for any line $l$ with a negative slope. In this subsection, we study the ratio $r_l(u_l,v_l)$.

For a given $k=-\frac{a_1}{a_2}\in \mathbb Q_{<0}$ with $a_1,a_2\in \mathbb Z_{>0}$ and $g.c.d.(a_1,a_2)=1$, denote by $L_n: y=k(x-n)+n-1, n\in \mathbb Z$ the sequence of lines through $(n,n-1)$ with slope $k$. The first two integral  points on $L_n$ are $(n,n-1)$ and $(n+a_2,n-1-a_1)$. For any line $l: y=kx+b$ with slope $k$, assume that $l\neq L_n$ for any $n\in \mathbb Z$, there exists $\widetilde N_+\in \mathbb Z$ such that $l$ lies between $l_{\widetilde N_+-1}$ and $l_{\widetilde N_+}$.

%Denote by $(x_{N(l)-1},y_{N(l)-1})$ and $(x_{N(l)},y_{N(l)})$ the last two integral  points on $l$. Thus $x_{N(l)}<n^+(l)-1$. To ensure $N(l)\geq 2$, we may assume that $n^+(l)\geq 4$.

\begin{lemma}\label{lem-r3}
With the foregoing notation. We have
$$r_l(u_l,v_l)> r_{L_{\widetilde N_+}}(u_{L_{\widetilde N_+}},v_{L_{\widetilde N_+}}).$$
\end{lemma}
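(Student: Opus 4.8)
The plan is to mirror the proof of Lemma \ref{lem-r1}, which treated the \emph{last} ratio, but now working with the \emph{first} two integral points and the lines $L_n$ instead of $l_n$. The key idea is again a single application of the Ptolemy inequality to a cleverly chosen quadrilateral, where one pair of opposite vertices comes from the line $l$ and the other pair comes from the reference line $L_{\widetilde N_+}$, which lies immediately to the right of $l$ in the family of lines of slope $k$.

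First I would set up the four points explicitly. Let $O=(0,0)$ and $O'=(a_2,-a_1)$ be the shift vector along the direction of slope $k$. Let $A_1=(u_l,v_l)$, $A_2=(u'_l,v'_l)$ be the first two integral points on $l$, and let $B_1=(\widetilde N_+,\widetilde N_+-1)=(u_{L_{\widetilde N_+}},v_{L_{\widetilde N_+}})$, $B_2=(\widetilde N_++a_2,\widetilde N_+-1-a_1)=(u'_{L_{\widetilde N_+}},v'_{L_{\widetilde N_+}})$ be the first two integral points on $L_{\widetilde N_+}$. Since all four points $A_1,A_2,B_2,B_1$ are related by the common shift $O'$ (we have $A_2-A_1=B_2-B_1=O'$), they form a parallelogram, exactly as in Lemma \ref{lem-r1}. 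The crucial geometric fact is the crossing relation: because $L_{\widetilde N_+}$ lies to the right of $l$, the point $B_1$ sits below and to the right of $A_1$ relative to the ray from $O$, so that the segment $OA_2$ crosses the segment $O'B_1$ (equivalently, the left deformations cross in the sense of \cite{LLRS}). I would verify this crossing by comparing coordinates, using that $\widetilde N_+>u_l$ and $v_l>v_{L_{\widetilde N_+}}=\widetilde N_+-1$ are incompatible directly, so the correct relation to extract is the one making the diagonals of the quadrilateral $O,A_2,B_2,O'$ (or the appropriate convex hull) cross.

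Once the crossing is established, applying the Ptolemy inequality to the convex quadrilateral with the relevant diagonals gives an inequality of the shape $|OA_2|\,|O'B_1|>|OB_1|\,|O'A_2|$, and then using $|O'A_2|=|OA_1|$ and $|O'B_1|=\,$the Markov distance from the shifted origin, I translate this into
$$\frac{|OA_2|}{|OA_1|}>\frac{|OB_2|}{|OB_1|},$$
which is precisely $r_l(u_l,v_l)>r_{L_{\widetilde N_+}}(u_{L_{\widetilde N_+}},v_{L_{\widetilde N_+}})$. The translation step uses that $|O'X|=|O(X-O')|$ for the Markov distance, i.e. invariance under the lattice shift, combined with the parallelogram relations $A_2-O'=A_1$ and $B_2-O'=B_1$.

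The main obstacle I anticipate is getting the crossing configuration and the orientation of the Ptolemy inequality exactly right. In Lemma \ref{lem-r1} the comparison point was the \emph{last} point on $l_{N^+}$ and the inequality went $r_l(z_l,w_l)<r_{l_{N^+}}(\dots)$, whereas here the direction is reversed (strict $>$) because we are at the \emph{first} point and the reference line $L_{\widetilde N_+}$ is on the opposite side relative to the region $\{x>y\}$. Concretely, I must confirm that near the boundary line $y=x-1$ the point $(\widetilde N_+,\widetilde N_+-1)$ on $L_{\widetilde N_+}$ lies to the right of $(u_l,v_l)$, so that the crossing of $OA_2$ with $O'B_1$ produces $|OA_2|\,|O'B_1|>|OB_1|\,|O'A_2|$ rather than the reverse; this is the step where one can easily flip a sign, so I would accompany it with the analogue of the figure used for Lemma \ref{lem-r1}. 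All the algebra after the Ptolemy step is routine rearrangement, so the entire difficulty is concentrated in the geometric crossing lemma and the correct assignment of diagonals.
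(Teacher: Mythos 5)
Your overall strategy --- one application of the Ptolemy inequality to a parallelogram configuration built from two points of $l$ and two points of $L_{\widetilde N_+}$, followed by translation invariance of the Markov distance --- is exactly the paper's, but you apply it to the wrong quadrilateral, and the error is not a fixable sign flip. You take the diagonals $OA_2$ and $O'B_1$, where $B_1=(\widetilde N_+,\widetilde N_+-1)$ is the \emph{first} point of $L_{\widetilde N_+}$. Even granting the crossing, Ptolemy gives $|OA_2|\,|O'B_1|>|OB_1|\,|O'A_2|$, i.e.\ (using $|O'A_2|=|OA_1|$) the bound $\frac{|OA_2|}{|OA_1|}>\frac{|OB_1|}{|O'B_1|}$. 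Your translation step then asserts this right-hand side equals $\frac{|OB_2|}{|OB_1|}$, but the parallelogram relation $B_2-O'=B_1$ gives $|O'B_2|=|OB_1|$ and says nothing about $|O'B_1|$. What is true is $|O'B_1|=|OC|$, where $C=B_1-O'=(\widetilde N_+-a_2,\widetilde N_+-1+a_1)$ is the lattice point of $L_{\widetilde N_+}$ one step \emph{before} $B_1$ (it lies outside the region $x>y$). So what you have actually proved is $r_l(u_l,v_l)>\frac{|OB_1|}{|OC|}$: a comparison with the ratio at $C$, one step too early on $L_{\widetilde N_+}$.

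This gap cannot be closed a posteriori: by Proposition \ref{prop-l1}(1) (whose Ptolemy argument applies verbatim here, since $L_{\widetilde N_+}$ does not pass through the origin), the ratios along $L_{\widetilde N_+}$ strictly increase with $x$, so $\frac{|OB_1|}{|OC|}<\frac{|OB_2|}{|OB_1|}=r_{L_{\widetilde N_+}}(u_{L_{\widetilde N_+}},v_{L_{\widetilde N_+}})$; your inequality is strictly weaker than the lemma, and the chain of inequalities runs in the wrong direction. The repair is the configuration you in fact mention in passing and then abandon: take the quadrilateral $O,O',A_2,B_2$, whose diagonals $OA_2$ and $O'B_2$ both end at the \emph{second} integral points and cross precisely because $l$ lies strictly between $L_{\widetilde N_+-1}$ and $L_{\widetilde N_+}$ (this mirrors Lemma \ref{lem-r1}, where the two crossing segments $OA'_2$ and $O'A_2$ both end at the \emph{last} points of the two lines). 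Then Ptolemy gives $|OA_2|\,|O'B_2|>|OB_2|\,|O'A_2|$, and the two legitimate translation identities $|O'A_2|=|OA_1|$ and $|O'B_2|=|OB_1|$ yield $\frac{|OA_2|}{|OA_1|}>\frac{|OB_2|}{|OB_1|}$, which is precisely the assertion of the lemma; this is the paper's proof.
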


\begin{proof}
Let $O=(0,0), A_1=(u_l,v_l), A_2=(u'_l,v'_l), A_1'=(u_{L_{\widetilde N_+}},v_{L_{\widetilde N_+}}), A_2'=(u'_{L_{\widetilde N_+}},v'_{L_{\widetilde N_+}})$. As the slopes of $l$ and $l_n$ are $k$, $A_1,A_2,A'_2$ and $A'_1$ form a parallelogram. Let $O'=(a_2,-a_1)$.

As $l$ lies between $l_{\widetilde N_+-1}$ and $l_{\widetilde N_+}$, we see that $O'A'_2$ crosses $OA_2$. By the Ptolemy inequality, we have $|OA_2||O'A_2|>|OA'_2||O'A_2|$, that is
$$|OA_2||OA'_1|>|OA'_2||OA_1|.$$
It follows that $\frac{|OA_2|}{|OA_1|}>\frac{|OA'_2|}{|OA'_1|},$ that is
$$r_l(u_l,v_l)> r_{L_{\widetilde N_+}}(u_{L_{\widetilde N_+}},v_{L_{\widetilde N_+}}).$$

\begin{figure}[h]
\includegraphics{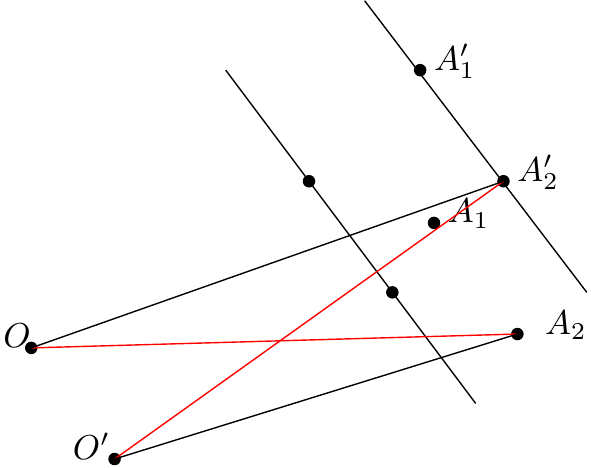}

{\rm Figure for Lemma \ref{lem-r3}}
\end{figure}

\end{proof}

\begin{lemma}\label{lem-r4}
Let $l$ be a line with negative slope $k=-\frac{a_1}{a_2}$ with $a_1,a_2\in \mathbb Z_{>0}$ and $g.c.d.(a_1,a_2)=1$. For a larger enough $t\in \mathbb Z_{>0}$, there exists $\widetilde N_-\in \mathbb N$ such that
$$r_{l\langle t\rangle}(u_{l\langle t\rangle},v_{l\langle t\rangle})< r_{L_{\widetilde N_-}}(u_{L_{\widetilde N_-}},v_{L_{\widetilde N_-}}).$$
moreover, $\widetilde N_-$ can be chosen such that $lim_{t\to \infty} \widetilde N_-=+\infty$.

\end{lemma}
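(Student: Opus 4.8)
Lemma \ref{lem-r4} is the mirror image of Lemma \ref{lem-r2}, with the roles of "last point" and "first point" interchanged, and with the shift $l[t]$ along the $x$-axis replaced by the shift $l\langle t\rangle$ along the diagonal $y=x$. So the plan is to follow the same Ptolemy-crossing argument used in Lemma \ref{lem-r2}, but transported to the first-ratio situation.

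\textbf{Lemma \ref{lem-r4}} is the exact mirror image of Lemma \ref{lem-r2}, with the roles of the last two integral points and the first two integral points interchanged, and with the horizontal shift $l[t]$ replaced by the diagonal shift $l\langle t\rangle$. So the plan is to transport the Ptolemy-crossing argument of Lemma \ref{lem-r2} to the first-ratio situation. First I would fix notation parallel to that proof: set $O=(0,0)$ and $O'=(a_2,-a_1)$, and by Lemma \ref{lem-fl}(1) write the first two integral points of $l\langle t\rangle$ as $A_1\langle t\rangle=(u_l+t,v_l+t)$ and $A_2\langle t\rangle=(u'_l+t,v'_l+t)=A_1\langle t\rangle+(a_2,-a_1)$, so that as $t$ grows these points slide up the diagonal $y=x+(v_l-u_l)$; by Proposition \ref{prop-l2}(3) this already forces $r_{l\langle t\rangle}(u_{l\langle t\rangle},v_{l\langle t\rangle})$ to decrease, which is consistent with the upper bound we are after. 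For the reference lines I record the first two points of $L_{\widetilde N_-}$ as $B_1=(\widetilde N_-,\widetilde N_--1)$ and $B_2=(\widetilde N_-+a_2,\widetilde N_--1-a_1)=B_1+(a_2,-a_1)$.

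Next I would reduce the target inequality $r_{l\langle t\rangle}(u_{l\langle t\rangle},v_{l\langle t\rangle})<r_{L_{\widetilde N_-}}(u_{L_{\widetilde N_-}},v_{L_{\widetilde N_-}})$ to a single Ptolemy inequality. Since the difference vectors coincide, $OO'A_2\langle t\rangle A_1\langle t\rangle$ and $OO'B_2B_1$ are parallelograms, so by translation invariance of the Markov distance $|OA_1\langle t\rangle|=|O'A_2\langle t\rangle|$ and $|OB_1|=|O'B_2|$. Substituting these into the cross-multiplied form of the target, the desired inequality becomes $|OA_2\langle t\rangle|\,|O'B_2|<|OB_2|\,|O'A_2\langle t\rangle|$, which is precisely the strict Ptolemy inequality for the quadrilateral with diagonals $OB_2$ and $O'A_2\langle t\rangle$, valid as soon as these two segments are shown to cross.

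The geometric heart is to produce, for all large $t$, an index $\widetilde N_-$ realizing this crossing with $\widetilde N_-\to\infty$. Here I would track the point $P(t)$ where the segment $O'A_2\langle t\rangle$ meets the diagonal $y=x-(1+a_1+a_2)$ that contains every $B_2$. A short computation shows this intersection occurs at parameter $s=1/(u_l-v_l)$ along $O'A_2\langle t\rangle$, independent of $t$, so the $x$-coordinate of $P(t)$ equals $a_2+s(u_l+t)$ and hence tends to $+\infty$ with $t$. Choosing $\widetilde N_-$ so that $B_2$ sits at (or just beyond) $P(t)$ forces the segment $OB_2$ to cross $O'A_2\langle t\rangle$, and since the $x$-coordinate of $P(t)$ diverges we may take $\widetilde N_-\to+\infty$. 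This is exactly the mirror of the step in Lemma \ref{lem-r2} where the crossing of $O'A_2[t]$ with the line $y=1$ recedes to infinity.

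The main obstacle I anticipate is purely the geometric bookkeeping of this last paragraph: verifying that the integer choice of $\widetilde N_-$ really yields a transversal crossing of the two segments (not merely of their supporting lines), and checking that $B_1$ remains a legitimate first integral point of $L_{\widetilde N_-}$ in the region $x>y>0$, i.e.\ that $\widetilde N_->1$ for the relevant range of $t$. Once the crossing is secured, the Ptolemy inequality together with the two parallelogram substitutions gives the conclusion immediately, exactly as in Lemma \ref{lem-r2}.
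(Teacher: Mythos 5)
Your setup and overall strategy coincide with the paper's proof: the same points $O=(0,0)$, $O'=(a_2,-a_1)$, $A_1\langle t\rangle$, $A_2\langle t\rangle$, the same reduction of the ratio inequality to a single strict Ptolemy inequality via the translation identities $|OA_1\langle t\rangle|=|O'A_2\langle t\rangle|$ and $|OB_1|=|O'B_2|$, and the same tracking of the intersection point $P(t)$ of $O'A_2\langle t\rangle$ with the line $y=x-(1+a_1+a_2)$; your computation $s=1/(u_l-v_l)$ and the divergence of $x_{P(t)}$ are correct. However, the crucial geometric step is stated backwards, and as written it fails: choosing $B_2$ ``at (or just beyond)'' $P(t)$ does \emph{not} force a crossing. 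Writing $B_2=(n+a_2,n-1-a_1)$, the points $O$ and $B_2$ lie on opposite sides of the line through $O'$ with direction $A_2\langle t\rangle-O'=(u_l+t,v_l+t)$ exactly when $n(u_l-v_l)<u_l+t$, i.e.\ exactly when $x_{B_2}=n+a_2<a_2+\frac{u_l+t}{u_l-v_l}=x_{P(t)}$; so the segments cross only when $B_2$ lies \emph{strictly before} $P(t)$. If $B_2$ lies beyond $P(t)$ the two segments are disjoint and no Ptolemy inequality is available; if $B_2=P(t)$ then $B_2$ lies on the segment $O'A_2\langle t\rangle$, the four points do not form a convex quadrilateral, and the paper's Ptolemy inequality again does not apply. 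A concrete instance: take $k=-1$, $l:y=-x+6$ (so $(u_l,v_l)=(4,2)$) and $t=10$, giving $O'=(1,-1)$, $A_2\langle 10\rangle=(15,11)$, $P(10)=(8,5)$. Your prescription gives $B_2=(9,6)$, but the lines $OB_2:y=\frac{2}{3}x$ and $O'A_2\langle 10\rangle:y=\frac{6x-13}{7}$ meet at $x=\frac{39}{4}>9$, outside the segment $OB_2$, so there is no crossing; by contrast $B_2=(7,4)$, which lies before $P(10)$, yields a genuine crossing at $x=\frac{13}{2}$.

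The gap is local and the fix is the choice the paper in effect makes: take $\widetilde N_-$ to be (say) the largest integer $n>1+a_1$ with $n+a_2<x_{P(t)}$. For $t$ large such $n$ exists; the second transversality condition (that $O'$ and $A_2\langle t\rangle$ lie on opposite sides of the line $OB_2$) is then automatic, since its bound on $n$ is at least $\frac{u_l+t}{u_l-v_l}$ for $t$ large (with equality when $u_l-v_l=1$); and since $x_{P(t)}\to+\infty$ this choice still gives $\widetilde N_-\to+\infty$. With that one correction, your argument becomes exactly the paper's proof.
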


\begin{proof}
Let $O=(0,0), O'=(a_2,-a_1), A_1=(u_l,v_l), A_2=(u'_l,v'_l)$. For any $t\in \mathbb Z_{>0}$, denote $A_1\langle t\rangle=(u_l+t,v_l+t)$ and $A_2\langle t\rangle=(u'_l+t,v'_l+t)$. According to Lemma \ref{lem-fl}, $A_1\langle t\rangle$ and $A_2\langle t\rangle$ are the first two integral  points on $l\langle t\rangle$. We see that $A_2\langle t\rangle$ lies on the line $y=x-u'_l+v'_l$. On the other hand, the second integral  points on $L_n, n\in \mathbb Z_{>0}$ lies on the line $y=x-a_1-a_2-1$.

Note that if $t$ approximates to $+\infty$, the $x$-coordinate of the crossing point of the segment $O'A_2\langle t\rangle $ and the line $y=x-a_1-a_2-1$ approximate to $+\infty$. Therefore, for larger enough $t$, we can find $\widetilde N_-\in \mathbb N$ such that $OA'_2$ crosses $O'A_2\langle t\rangle $, where $A'_2$ is the second integral points on $L_{\widetilde N_-}$.

By the Ptolemy inequality, we have $|O'A_2\langle t\rangle||OA'_2|>|OA_2\langle t\rangle||O'A'_2|$, that is
$$|OA_1\langle t\rangle||OA'_2|>|OA_2\langle t\rangle||OA'_1|.$$
It follows that $\frac{|OA'_2|}{|OA'_1|}>\frac{|OA_2\langle t\rangle|}{|OA_1\langle t\rangle|},$ that is
$$r_{l\langle t\rangle}(u_{l\langle t\rangle},v_{l\langle t\rangle})< r_{L_{\widetilde N_-}}(u_{L_{\widetilde N_-}},v_{L_{\widetilde N_-}}).$$

Moreover, when $t$ approximates to $+\infty$, the $x$-coordinate of the crossing point of the segment $O'A_2\langle t\rangle $ and the line $y=x-a_1-a_2-1$ approximates to $+\infty$, so we can choose $\widetilde N_-\in \mathbb N$ such that $\widetilde N_-$ approximates to $+\infty$.

\begin{figure}[h]
\includegraphics{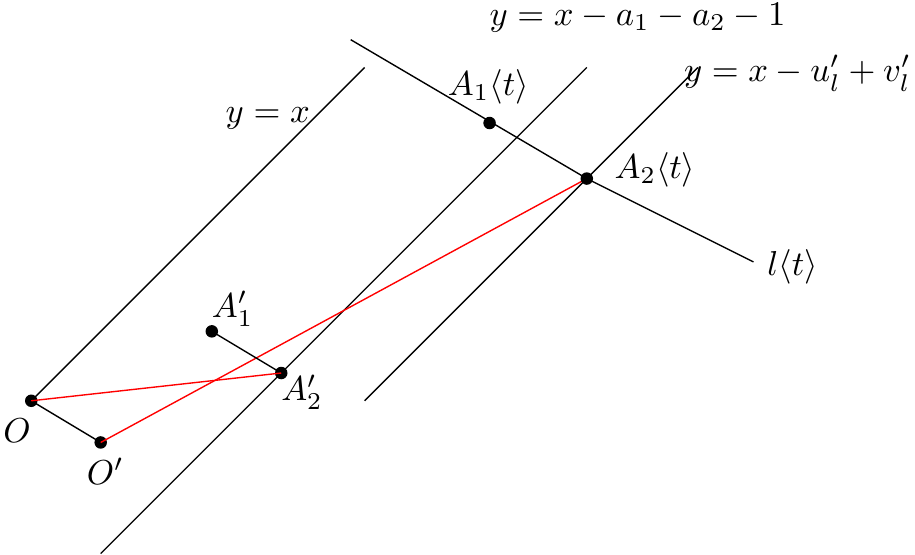}

{\rm Figure for Lemma \ref{lem-r4}}
\end{figure}

\end{proof}

\begin{lemma}\label{lem-r6}
Assume that $k=-\frac{a_1}{a_2}$ with $a_1,a_2\in \mathbb Z_{>0}$ and $g.c.d.(a_1,a_2)=1$. Let $\gamma:y=kx$ be the line passing through $O$ with slope $k$. Suppose that $n>1+a_1$.
\begin{enumerate}[$(1)$]
  \item Let $l$ be a line with slope $k$. Then there exists $\sigma(n,l)\in \mathbb R$ such that
        $$r_{l\langle t\rangle}(u_{l\langle t\rangle},v_{l\langle t\rangle})< r_{L_n}(u_{L_{n}},v_{L_{n}})$$
        for all $t\in \mathbb Z_{>0}$ such that $l\langle t\rangle$ lies on the right side of $\gamma\langle\sigma(n,l)\rangle$.
  \item Let $l'$ be another line with slope $k$. If $u'_l-v'_l<u'_{l'}-v'_{l'}$ then $\sigma(n,l)<\sigma(n,l')$.
  \item There exists $\sigma(n)\in \mathbb R$ such that for any line $l$ with slope $k$,
        $$r_{l\langle t\rangle}(u_{l\langle t\rangle},v_{l\langle t\rangle})< r_{L_n}(u_{L_{n}},v_{L_{n}})$$
        for all $t\in \mathbb Z_{>0}$ such that $l\langle t\rangle$ lies on the right side of $\gamma\langle\sigma(n)\rangle$.
\end{enumerate}
\end{lemma}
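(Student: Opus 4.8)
The plan is to re-run the Ptolemy computation from the proof of Lemma \ref{lem-r4}, but with the roles of $n$ and $t$ interchanged: there one fixed a large $t$ and \emph{produced} $\widetilde N_-$, whereas here $n$ is fixed and one must \emph{locate} an explicit threshold for $t$, recorded as a shift $\gamma\langle\sigma(n,l)\rangle$ of $\gamma$. For (1) I keep the points of that proof: $O=(0,0)$, $O'=(a_2,-a_1)$, and $A_1'=(n,n-1)$, $A_2'=(n+a_2,n-1-a_1)$, the first two integral points of $L_n$, which lie in $\{x>y>0\}$ precisely because $n>1+a_1$. By Lemma \ref{lem-fl} the first two integral points of $l\langle t\rangle$ are $A_1\langle t\rangle=(u_l+t,v_l+t)$ and $A_2\langle t\rangle=(u_l'+t,v_l'+t)$, and as $t$ grows $A_2\langle t\rangle$ travels up-right along the diagonal $D: y-x=v_l'-u_l'$. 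The Ptolemy step needs the diagonals $OA_2'$ and $O'A_2\langle t\rangle$ to cross; I claim this crossing switches on exactly when $A_2\langle t\rangle$ passes the point $A^\ast:=D\cap(O'A_2')$, the position at which $A_2'$ lies on the segment $O'A_2\langle t\rangle$. I then \emph{define} $\sigma(n,l)$ by declaring $\gamma\langle\sigma(n,l)\rangle$ to be the slope-$k$ line through $A^\ast$; since every line of slope $k$ equals $\gamma\langle\sigma\rangle$ for a unique $\sigma$, this is well defined, and ``$l\langle t\rangle$ to the right of $\gamma\langle\sigma(n,l)\rangle$'' is precisely ``$A_2\langle t\rangle$ at or beyond $A^\ast$ on $D$'', which forces the crossing. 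Given the crossing, the identical computation of Lemma \ref{lem-r4} (using $|O'A_2\langle t\rangle|=|OA_1\langle t\rangle|$ and $|O'A_2'|=|OA_1'|$, since the corresponding segments are translates) yields $r_{l\langle t\rangle}(u_{l\langle t\rangle},v_{l\langle t\rangle})<r_{L_n}(u_{L_n},v_{L_n})$.

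Part (2) is then immediate from the construction: $A^\ast$ is the intersection of $D$ with the \emph{fixed} line $O'A_2'$, so it depends on $l$ only through $D$, that is, through the offset $u_l'-v_l'$. Along $O'A_2'$ the quantity $x-y$ increases monotonically in the up-right direction and equals $u_l'-v_l'$ at $A^\ast$; hence larger $u_l'-v_l'$ places $A^\ast$ further up-right, so the slope-$k$ line through it moves to the right, and $\sigma(n,\cdot)$ is strictly increasing in $u_l'-v_l'$. Thus $u_l'-v_l'<u_{l'}'-v_{l'}'$ gives $\sigma(n,l)<\sigma(n,l')$.

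For (3) it suffices to bound $u_l'-v_l'$ over all $l$ of slope $k$. Because $(u_l,v_l)$ is the \emph{first} integral point of $l$ in $\{x>y>0\}$, its predecessor $(u_l-a_2,v_l+a_1)$ on $l$ leaves this region; as $v_l+a_1>0$ always holds, the only way out is through $x\le y$, i.e. $u_l-v_l\le a_1+a_2$, so that $u_l'-v_l'=(u_l-v_l)+(a_1+a_2)\le 2(a_1+a_2)$. Hence $u_l'-v_l'$ takes only finitely many values, and by (2) I may set $\sigma(n):=\max_l\sigma(n,l)$, attained at a line of maximal offset; this $\sigma(n)$ then works for every $l$, exactly as in Lemma \ref{lem-r5}(3).

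The one genuinely delicate point is the crossing analysis in (1): one must verify that the onset of crossing is governed by $A^\ast$ and not by the competing configuration where $A_2\langle t\rangle$ itself meets the segment $OA_2'$. Here the hypothesis $n>1+a_1$ and the inequality $u_l'-v_l'\ge 1+a_1+a_2$ (from $u_l-v_l\ge1$) are decisive: since the lines $OA_2'$ and $O'A_2'$ both pass through $A_2'$ and have slope less than $1$, tracking the coordinate $x-y$ shows that $D$ meets each of them at or beyond $A_2'$, so the intersection $D\cap(OA_2')$ falls outside the segment $OA_2'$ and only $A^\ast$ survives as the true threshold; beyond $A^\ast$ the crossing point lies in the interiors of both diagonals, legitimizing the strict Ptolemy inequality. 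Everything downstream of this verification is the already-established computation of Lemma \ref{lem-r4}.
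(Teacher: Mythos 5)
Your proof is correct, and structurally it mirrors the paper's: the same auxiliary points $O'=(a_2,-a_1)$, $A'_1=(n,n-1)$, $A'_2=(n+a_2,n-1-a_1)$, the same Ptolemy-plus-translation-invariance computation imported from Lemma \ref{lem-r4}, monotonicity of the threshold in $u'_l-v'_l$ for (2), and a bounded-offset/maximum argument for (3). But it is not the same proof: the threshold point is different, and yours is the one that actually works. The paper takes $A$ to be the intersection of $D\colon y=x-u'_l+v'_l$ with the line through $O$ and $A'_2$, and asserts that once $A_2\langle t\rangle$ passes $A$ the segments $OA'_2$ and $O'A_2\langle t\rangle$ cross. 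That assertion controls only one of the two separations needed for convex position (it forces $O'$ and $A_2\langle t\rangle$ onto opposite sides of the line $OA'_2$), and it can fail: take $a_1=1$, $a_2=2$, $n=3$, so $O'=(2,-1)$, $A'_2=(5,1)$, and $l\colon x+2y=9$ with $t=1$; then $A=(7.5,1.5)$ and $A_2\langle 1\rangle=(8,2)$ lies beyond $A$, yet $A'_2$ lies \emph{inside} the triangle $OO'A_2\langle 1\rangle$ (indeed $A'_2=\tfrac14\,O+\tfrac16\,O'+\tfrac{7}{12}\,A_2\langle 1\rangle$), so the four points are not in convex position and the Ptolemy inequality cannot be invoked. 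Your point $A^\ast=D\cap(O'A'_2)$ is precisely the threshold for the other separation ($O$ and $A'_2$ on opposite sides of the line $O'A_2\langle t\rangle$), and your "delicate point" is exactly what makes it the binding one: at $A'_2$ one has $x-y=1+a_1+a_2\le u'_l-v'_l$, and the line $OA'_2$ has smaller slope than $O'A'_2$, so $A^\ast$ lies at or beyond $A$ along $D$; hence beyond $A^\ast$ both separations hold and the strict Ptolemy step is legitimate (in the example above your threshold correctly gives $t\ge 5$). Your part (3) likewise replaces the paper's bound $u'_l-v'_l\le 2a_1+1$, which is false (the same line $x+2y=9$ has $u'_l-v'_l=6>3$), by the correct $u'_l-v'_l\le 2(a_1+a_2)$; since only boundedness is needed, the maximum construction then goes through. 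In short: same architecture, but your crossing analysis and offset bound repair genuine gaps in the paper's own argument, and the lemma survives because it only asserts the existence of some threshold $\sigma(n,l)$.
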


\begin{proof}
(1) Let $O=(0,0), O'=(a_2,-a_1), A'_1=(n,n-1), A'_2=(n+a_2,n-1-a_1)$. Assume that the line connecting $O$ and $A'_2$ crosses $y=x-u'_l+v'_l$ at some point $A$. Then the line through $A$ with slope $k$ equals to $l\langle\sigma(n,l)\rangle$ for some $\sigma(n,l)\in \mathbb R$. Thus, for any $t\in \mathbb Z_{>0}$ such that $l\langle t\rangle$ lies on the right side of $\gamma\langle\sigma(n,l)\rangle$, we have $OA_2'$ crosses $O'A_2\langle t\rangle$, where $A_2\langle t\rangle$ is the second integral  points on $l\langle t\rangle$. From the proof of Lemma \ref{lem-r5}, we have
$$r_{l\langle t\rangle}(u_{l\langle t\rangle},v_{l\langle t\rangle})< r_{L_n}(u_{L_{n}},v_{L_{n}})$$

(2) It follows immediately from the construction of $\sigma(n,l)$.

(3) From the construction of $\sigma(n,l)$, we see that $\sigma(n,l)$ only depends on $u'_l-v'_l$. Note that for any $l$ we have $u'_l-v'_l\leq 2a_1+1$ as $(u'_l,v'_l)$ is the second integral  point. Thus we may let $\sigma(n)=\sigma(n,l)$, where $l$ is the line with slope $k$ such that $u'_l-v'_l$ is maximal.

\begin{figure}[h]
\includegraphics{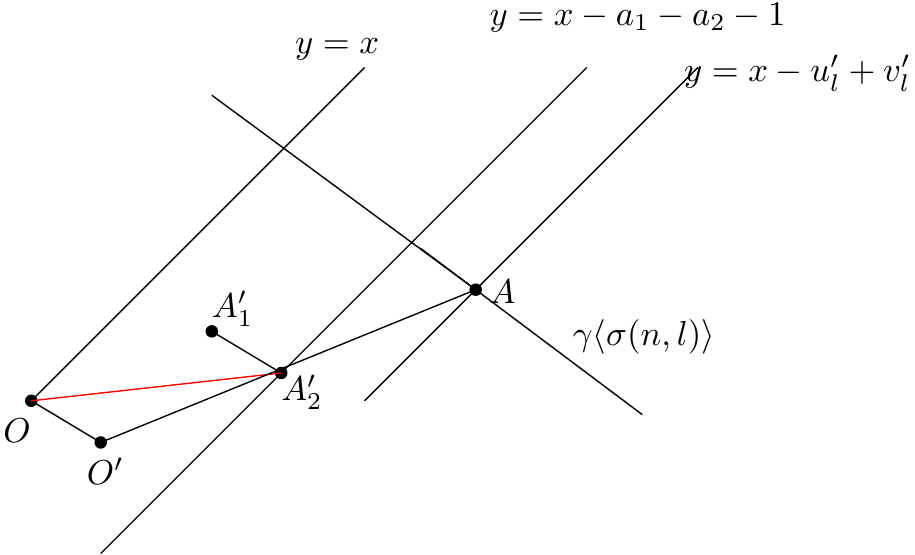}

{\rm Figure for Lemma \ref{lem-r6}}
\end{figure}

\end{proof}

As a corollary of Lemmas \ref{lem-r3} and \ref{lem-r4}, we have the following.

\begin{corollary}\label{cor-r2}
With the foregoing notation. Let $l$ be a line with a negative slope. Then
$$lim_{t\to \infty} r_{l\langle t\rangle}(u_{l\langle t\rangle},v_{l\langle t\rangle})=lim_{n\to \infty} r_{L_n}(u_{L_n},v_{L_n}).$$
\end{corollary}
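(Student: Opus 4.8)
The plan is to prove that both limits exist and then sandwich the left-hand quantity between shifted copies of the right-hand sequence, exactly paralleling the argument for the last ratio in Corollary \ref{cor-r1}.

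First I would record that the right-hand limit exists. Since the first points $(u_{L_n},v_{L_n})=(n,n-1)$ of the lines $L_n$ all lie on the diagonal $y=x-1$, so that $v_{L_n}-u_{L_n}=-1$ is independent of $n$, while their $x$-coordinates strictly increase, Proposition \ref{prop-l2}(3) shows that $r_{L_n}(u_{L_n},v_{L_n})$ is strictly decreasing in $n$. Being a decreasing sequence of positive real numbers it converges; write $L_\ast:=\lim_{n\to\infty} r_{L_n}(u_{L_n},v_{L_n})$. By the same token, Lemma \ref{lem-fl}(1) gives $(u_{l\langle t\rangle},v_{l\langle t\rangle})=(u_l+t,v_l+t)$, so the first points of the lines $l\langle t\rangle$ all lie on the diagonal $y=x+(v_l-u_l)$; hence Proposition \ref{prop-l2}(3) shows $r_{l\langle t\rangle}(u_{l\langle t\rangle},v_{l\langle t\rangle})$ is decreasing in $t$ as well, so the left-hand limit exists.

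Next I would establish the lower bound $\liminf_{t\to\infty} r_{l\langle t\rangle}(u_{l\langle t\rangle},v_{l\langle t\rangle})\ge L_\ast$. Applying Lemma \ref{lem-r3} with $l$ replaced by $l\langle t\rangle$, and letting $\widetilde N_+(t)$ be the index with $l\langle t\rangle$ lying between $L_{\widetilde N_+(t)-1}$ and $L_{\widetilde N_+(t)}$, one gets
$$r_{l\langle t\rangle}(u_{l\langle t\rangle},v_{l\langle t\rangle})>r_{L_{\widetilde N_+(t)}}(u_{L_{\widetilde N_+(t)}},v_{L_{\widetilde N_+(t)}}).$$
As $t\to\infty$ the line $l\langle t\rangle$ recedes to infinity along the $y=x$ direction, so $\widetilde N_+(t)\to\infty$; since $r_{L_n}\to L_\ast$, the right-hand side tends to $L_\ast$, which yields the lower bound. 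For the matching upper bound $\limsup_{t\to\infty} r_{l\langle t\rangle}(u_{l\langle t\rangle},v_{l\langle t\rangle})\le L_\ast$, I would invoke Lemma \ref{lem-r4}: for all large enough $t$ there is an index $\widetilde N_-(t)$ with
$$r_{l\langle t\rangle}(u_{l\langle t\rangle},v_{l\langle t\rangle})<r_{L_{\widetilde N_-(t)}}(u_{L_{\widetilde N_-(t)}},v_{L_{\widetilde N_-(t)}}),$$
and with $\widetilde N_-(t)\to\infty$ as $t\to\infty$. Again the right-hand side converges to $L_\ast$, giving the upper bound.

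Combining the two bounds yields $\lim_{t\to\infty} r_{l\langle t\rangle}(u_{l\langle t\rangle},v_{l\langle t\rangle})=L_\ast$, which is the assertion. The only point requiring care is the geometric bookkeeping that both auxiliary indices $\widetilde N_\pm(t)$ tend to infinity with $t$: for $\widetilde N_-$ this is part of the conclusion of Lemma \ref{lem-r4}, while for $\widetilde N_+$ it follows because $l\langle t\rangle$ moves monotonically in the up-right direction and must therefore eventually pass beyond any fixed $L_n$. Once this is in hand, the common convergence $r_{L_n}\to L_\ast$ transports to both squeezing sequences and the result follows from the elementary sandwich.
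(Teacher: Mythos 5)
Your proof is correct and takes essentially the same approach as the paper, which obtains Corollary \ref{cor-r2} precisely as a sandwich between the lower bound of Lemma \ref{lem-r3} (applied to $l\langle t\rangle$) and the upper bound of Lemma \ref{lem-r4}, with $\widetilde N_\pm(t)\to\infty$. Your added justification that both limits exist, via the monotonicity coming from Proposition \ref{prop-l2}(3), is exactly the fact the paper relies on implicitly here and makes explicit in the proof of Proposition \ref{prop-lim2}.
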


\begin{proposition}\label{prop-lim2}
With the foregoing notation. Assume that $k=-\frac{a_1}{a_2}\in \mathbb Q_{<0}$ with $a_1,a_2\in \mathbb Z_{>0}$ and $g.c.d.(a_1,a_2)=1$. Let $l$ be a line with slope $k$. Then
\begin{enumerate}[$(1)$]
  \item $$lim_{n\to \infty} r_{L_n}(u_{L_n},v_{L_n})=(\frac{3}{2\sqrt 2})^{a_1+a_2}(1+\sqrt 2)^{-a_1+a_2}.$$
  \item $$lim_{t\to \infty} r_{l\langle t\rangle}(u_{l\langle t\rangle},v_{l\langle t\rangle})=(\frac{3}{2\sqrt 2})^{a_1+a_2}(1+\sqrt 2)^{-a_1+a_2}.$$
\end{enumerate}

\end{proposition}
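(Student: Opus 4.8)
The plan is to follow the proof of Proposition \ref{prop-lim1} almost verbatim, with one structural change: now \emph{both} endpoints of the first ratio, namely $(u_{L_n},v_{L_n})=(n,n-1)$ and $(u'_{L_n},v'_{L_n})=(n+a_2,n-1-a_1)$, sit near the diagonal $y=x-1$ rather than near the $x$-axis, so the Fibonacci asymptotics and Proposition \ref{prop-app}(1) must be replaced throughout by the Pell asymptotics and Proposition \ref{prop-app}(2). I would prove only part $(1)$; part $(2)$ follows at once from Corollary \ref{cor-r2}.

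First I would record monotonicity of the sequence $r_{L_n}(u_{L_n},v_{L_n})$, which reduces the problem to computing a single subsequential limit. For consecutive lines the first points $(n,n-1)$ and $(n+1,n)$ satisfy $v_{L_n}-u_{L_n}=v_{L_{n+1}}-u_{L_{n+1}}=-1$ and $u_{L_n}<u_{L_{n+1}}$, so Proposition \ref{prop-l2}(3) gives $r_{L_n}(u_{L_n},v_{L_n})>r_{L_{n+1}}(u_{L_{n+1}},v_{L_{n+1}})$; hence the sequence is strictly decreasing. A monotone sequence converges to the limit of any of its subsequences, so it suffices to evaluate the limit along a conveniently chosen subsequence.

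Next I would take the subsequence $n=(1+a_1+a_2)q-a_2$, $q\in\mathbb Z_{>0}$, chosen precisely so that the second point becomes a scalar multiple of a primitive Pell index: one checks that $(n+a_2,\,n-1-a_1)=\bigl((1+a_1+a_2)q,\,(1+a_1+a_2)(q-1)\bigr)=s\cdot(q,q-1)$ with $s=1+a_1+a_2$. Proposition \ref{prop-app}(2), applied with the \emph{fixed} integer $s$ and $q\to\infty$, then gives $m_{n+a_2,n-1-a_1}\sim 3^{\,s-1}m_{q,q-1}^{\,s}=3^{\,a_1+a_2}m_{q,q-1}^{\,1+a_1+a_2}$, while the first point lies on $y=x-1$, so $m_{n,n-1}=P_{2n-1}$ is already in Pell form. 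I would then feed in the Pell asymptotics $m_{q,q-1}=P_{2q-1}\sim (1+\sqrt 2)^{2q-1}/(2\sqrt 2)$ and $P_{2n-1}\sim (1+\sqrt 2)^{2n-1}/(2\sqrt 2)$ from the preliminaries.

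Finally I would substitute these into $m_{n+a_2,n-1-a_1}/m_{n,n-1}$ and simplify; the only real content is exponent bookkeeping. With $n=(1+a_1+a_2)q-a_2$ the factor $2\sqrt 2$ collapses to $(2\sqrt 2)^{-(a_1+a_2)}$, and all $q$-dependent powers of $1+\sqrt 2$ cancel, leaving exponent $-a_1+a_2$, so the limit is
$$\Bigl(\tfrac{3}{2\sqrt 2}\Bigr)^{a_1+a_2}(1+\sqrt 2)^{-a_1+a_2},$$
as claimed. The main (indeed the only) obstacle is arranging the subsequence so that the second point is exactly $s\,(q,q-1)$, which is what allows Proposition \ref{prop-app}(2) to apply with a fixed exponent; once that alignment is secured the computation is routine and runs parallel to Proposition \ref{prop-lim1}.
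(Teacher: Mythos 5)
Your proposal is correct and matches the paper's proof essentially step for step: prove only (1) via Corollary \ref{cor-r2}, use Proposition \ref{prop-l2}(3) to get strict monotonicity, pass to a subsequence aligning the second point with a scalar multiple of $(q,q-1)$ so that Proposition \ref{prop-app}(2) and the Pell asymptotics apply, and finish with exponent bookkeeping. Your subsequence $n=(1+a_1+a_2)q-a_2$ is the paper's subsequence $n=q(1+a_1+a_2)+1+a_1$ up to a shift of $q$ by one (the paper's choice makes the second point $(1+a_1+a_2)\cdot(q+1,q)$ rather than $(1+a_1+a_2)\cdot(q,q-1)$), so this is the same argument, not a different route.
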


\begin{proof}
We shall only prove (1) as (2) follows by Corollary \ref{cor-r2}.

For each $n$, we have $(u_{L_n},v_{L_n})=(n,n-1)$ and $(u'_{L_n},v'_{L_n})=(n+a_2,n-1-a_1)$. By Proposition \ref{prop-l2} (3), the sequence $r_{L_n}(u_{L_n},v_{L_n}),n\in \mathbb Z_{>0}$ is strictly decreasing. It suffices to consider the subsequence indexed by $q(1+a_1+a_2)+1+a_1, q\in \mathbb Z_{>0}$. By Proposition \ref{prop-app}, we have
$$\begin{array}{rcl}
&& lim_{q\to \infty} r_{L_{q(1+a_1+a_2)+1+a_1}}(u_{L_{q(1+a_1+a_2)+1+a_1}},v_{L_{q(1+a_1+a_2)+1+a_1}})\vspace{1.5mm}\\
&=&\frac{m_{(q+1)(1+a_1+a_2),q(1+a_1+a_2)}}{m_{q(1+a_1+a_2)+1+a_1,q(1+a_1+a_2)+1+a_1-1}}\vspace{1.5mm}\\
&=&
\frac{3^{a_1+a_2}(m_{q+1,q})^{1+a_1+a_2}}{\frac{((1+\sqrt 2)^{2q(1+a_1+a_2)+2a_1+1}}{2\sqrt{2}}} \vspace{1.5mm}\\
&= &
\frac{3^{a_1+a_2}(\frac{(1+\sqrt{2})^{2q+1}}{2\sqrt{2}})^{1+a_1+a_2}}{\frac{((1+\sqrt 2)^{2q(1+a_1+a_2)+2a_1+1}}{2\sqrt{2}}}\vspace{1.5mm}\\
&= & (\frac{3}{2\sqrt 2})^{a_1+a_2}(1+\sqrt 2)^{-a_1+a_2}.
\end{array}$$

\end{proof}

\subsection{On the monotonicity}

\begin{theorem}\label{thm-mono}
\begin{enumerate}[$(1)$]
  \item For $k\in \mathbb Q$ with $k\geq -\frac{ln\frac{3(2+\sqrt 2)}{4}}{ln\frac{2(2+\sqrt 2)}{3}}\approx -1.1432$, the generalized Markov numbers increase with $x$ along any line $l: y=kx+b$;
  \item For $k\in \mathbb Q$ with $k\leq -\frac{2ln(\frac{1+\sqrt{5}}{2})}{ln(\frac{3(1+\sqrt{5})}{2\sqrt{5}})}\approx -1.2417$, the generalized Markov numbers decrease with $x$ along any line $y=kx+b$;
  \item For any $k\in \mathbb Q$ with $-\frac{2ln(\frac{1+\sqrt{5}}{2})}{ln(\frac{3(1+\sqrt{5})}{2\sqrt{5}})}<k<-\frac{ln\frac{3(2+\sqrt 2)}{4}}{ln\frac{2(2+\sqrt 2)}{3}}$, then for almost all $b\in \mathbb Q$, the generalized Markov numbers are not monotonic along the line $y=kx+b$.
\end{enumerate}

\end{theorem}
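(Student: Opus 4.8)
The plan is to reduce the three parts to the two limiting ratios already computed in Propositions \ref{prop-lim1} and \ref{prop-lim2}, fed into the monotonicity dichotomy of Proposition \ref{prop-mon}. Write $k=-a_1/a_2$ in lowest terms and put $s=a_1/a_2=-k$. Abbreviate the two limits by
$$\mu(k)=\lim_{t\to\infty} r_{l\langle t\rangle}(u_{l\langle t\rangle},v_{l\langle t\rangle})=\Big(\tfrac{3}{2\sqrt2}\Big)^{a_1+a_2}(1+\sqrt2)^{-a_1+a_2},$$
$$\lambda(k)=\lim_{t\to\infty} r_{l[t]}(z_{l[t]},w_{l[t]})=\Big(\tfrac{2\sqrt5}{3(1+\sqrt5)}\Big)^{a_1}\Big(\tfrac{1+\sqrt5}{2}\Big)^{2a_2}.$$
The first step is purely algebraic. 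Taking logarithms and dividing by $a_2>0$, one checks that $\ln\mu(k)$ has the sign of $g(s)=(s+1)\ln\tfrac{3}{2\sqrt2}+(1-s)\ln(1+\sqrt2)$ and that $\ln\lambda(k)$ has the sign of $h(s)=-s\ln\tfrac{3(1+\sqrt5)}{2\sqrt5}+2\ln\tfrac{1+\sqrt5}{2}$. Since $\tfrac{3}{2(2+\sqrt2)}<1$ and $\tfrac{2\sqrt5}{3(1+\sqrt5)}<1$, both $g$ and $h$ are strictly decreasing, and solving $g(s)=0$, $h(s)=0$ recovers exactly the two critical slopes $k_1=-\tfrac{\ln\frac{3(2+\sqrt2)}{4}}{\ln\frac{2(2+\sqrt2)}{3}}$ and $k_2=-\tfrac{2\ln\frac{1+\sqrt5}{2}}{\ln\frac{3(1+\sqrt5)}{2\sqrt5}}$. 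Hence $\mu(k)\geq1\iff k\geq k_1$, $\lambda(k)\leq1\iff k\leq k_2$, and since $k_2<k_1$, for $k_2<k<k_1$ one has $\mu(k)<1<\lambda(k)$.

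For part (1), the case $k\geq0$ is immediate from Proposition \ref{prop-mon}(1), so assume $k_1\leq k<0$. By Proposition \ref{prop-l2}(3) the sequence $r_{L_n}(u_{L_n},v_{L_n})$ strictly decreases to $\mu(k)$, and Lemma \ref{lem-r3} gives $r_l(u_l,v_l)>r_{L_{\widetilde N_+}}(u_{L_{\widetilde N_+}},v_{L_{\widetilde N_+}})>\mu(k)\geq1$ for every line $l$ of slope $k$ (with the case $l=L_n$ handled by the direct comparison). By Proposition \ref{prop-mon}(2.1) every such line is increasing. Part (2) is the mirror image: assume $k\leq k_2$; by Proposition \ref{prop-l2}(1) the sequence $r_{l_n}(z_{l_n},w_{l_n})$ strictly increases to $\lambda(k)$, and Lemma \ref{lem-r1} gives $r_l(z_l,w_l)<r_{l_{N^+}}(z_{l_{N^+}},w_{l_{N^+}})<\lambda(k)\leq1$ for every line of slope $k$, whence decrease by Proposition \ref{prop-mon}(2.2).

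For part (3), fix $k$ with $k_2<k<k_1$, so $\mu(k)<1<\lambda(k)$. Choose $n_1>1+a_1+a_2$ so large that $r_{l_{n_1}}(z_{l_{n_1}},w_{l_{n_1}})>1$ (possible since $r_{l_n}\uparrow\lambda(k)>1$) and $n_2>1+a_1$ so large that $r_{L_{n_2}}(u_{L_{n_2}},v_{L_{n_2}})<1$ (possible since $r_{L_n}\downarrow\mu(k)<1$). The decisive input is the uniform control in Lemmas \ref{lem-r5}(3) and \ref{lem-r6}(3): there are reference lines $\gamma[\delta(n_1)]$ and $\gamma\langle\sigma(n_2)\rangle$ of slope $k$ such that every line of slope $k$ to the right of $\gamma[\delta(n_1)]$ has last ratio $>r_{l_{n_1}}>1$, and every line to the right of $\gamma\langle\sigma(n_2)\rangle$ has first ratio $<r_{L_{n_2}}<1$. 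Both are half-planes bounded by parallel lines, so any line of slope $k$ to the right of both simultaneously satisfies $r_l(u_l,v_l)<1$ and $r_l(z_l,w_l)>1$, hence is non-monotonic by Proposition \ref{prop-mon}(2.3). Translating "to the right of both" into the intercept, this holds for all $b$ exceeding some threshold $B_0$; since a line of slope $k<0$ meets the region only for $b>0$ and the admissible intercepts lie in the discrete set $\tfrac{1}{a_2}\mathbb Z$, only finitely many admissible $b$ fail the criterion, which is the meaning of "almost all $b$".

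The main obstacle is the \emph{simultaneity} in part (3): the first ratio is forced below $1$ by shifting along the diagonal $y=x$ (the $\langle t\rangle$-shift), while the last ratio is forced above $1$ by shifting along the $x$-axis (the $[t]$-shift), and these are a priori two different one-parameter families. The point of Lemmas \ref{lem-r5} and \ref{lem-r6} is exactly that each estimate holds uniformly for all lines of slope $k$ far enough to the right, so both can be realized on a single line; checking that the two "right half-plane" conditions are compatible and translating the geometric threshold into a statement about the intercept $b$ is the delicate bookkeeping. Everything else is the sign analysis of $\mu$ and $\lambda$ together with the monotone-sequence comparisons already packaged in Lemmas \ref{lem-r1} and \ref{lem-r3}.
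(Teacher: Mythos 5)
Your proof is correct and takes essentially the same route as the paper: the same sign analysis of the two limits from Propositions \ref{prop-lim1} and \ref{prop-lim2} to produce the critical slopes, the same monotone-sequence comparisons (via Proposition \ref{prop-l2} and Lemmas \ref{lem-r1}, \ref{lem-r3}) to bound the first and last ratios, and for part (3) the same use of the uniform thresholds of Lemmas \ref{lem-r5}(3) and \ref{lem-r6}(3) combined by taking a maximum, feeding into Proposition \ref{prop-mon}. The only differences are cosmetic: in parts (1)--(2) you unwind Propositions \ref{prop-lim1}(2)/\ref{prop-lim2}(2) into the underlying comparisons with the reference families $l_n$, $L_n$ (which is exactly how the paper derives those statements anyway), and you spell out the ``almost all $b$'' bookkeeping in terms of intercepts in $\tfrac{1}{a_2}\mathbb{Z}_{>0}$, which the paper leaves implicit.
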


\begin{proof}
If $a_1\leq a_2$ then we have $(\frac{3}{2\sqrt 2})^{a_1+a_2}(1+\sqrt 2)^{-a_1+a_2}>1$. If $a_1<a_2$, then $(\frac{3}{2\sqrt 2})^{a_1+a_2}(1+\sqrt 2)^{-a_1+a_2}\geq 1$ if and only if $(\frac{3}{2\sqrt 2})^{a_1+a_2}\geq (1+\sqrt 2)^{a_1-a_2}\Longleftrightarrow(a_1+a_2)ln(\frac{3}{2\sqrt 2})\geq (a_1-a_2)ln(1+\sqrt 2)\Longleftrightarrow (ln(\frac{3}{2\sqrt 2})+ln(1+\sqrt 2))a_2\geq (ln(1+\sqrt 2)-ln(\frac{3}{2\sqrt 2}))a_1\Longleftrightarrow \frac{a_1}{a_2}\leq \frac{ln(1+\sqrt{2})+ln(3\sqrt{2}/4)}{ln(1+\sqrt{2})-ln(3\sqrt{2}/4)}$. As $\frac{ln(1+\sqrt{2})+ln(3\sqrt{2}/4)}{ln(1+\sqrt{2})-ln(3\sqrt{2}/4)}\geq 1$, we have $(\frac{3}{2\sqrt 2})^{a_1+a_2}(1+\sqrt 2)^{-a_1+a_2}\geq 1$ if and only if $\frac{a_1}{a_2}\leq \frac{ln(1+\sqrt{2})+ln(3\sqrt{2}/4)}{ln(1+\sqrt{2})-ln(3\sqrt{2}/4)}$ if and only if $-\frac{a_1}{a_2}\geq -\frac{ln(1+\sqrt{2})+ln(3\sqrt{2}/4)}{ln(1+\sqrt{2})-ln(3\sqrt{2}/4)}=-\frac{ln\frac{3(2+\sqrt 2)}{4}}{ln\frac{2(2+\sqrt 2)}{3}}$.

\smallskip

On the other hand, $(\frac{2\sqrt 5}{3(1+\sqrt 5)})^{a_1}(\frac{1+\sqrt 5}{2})^{2a_2}\leq 1\Longleftrightarrow (\frac{1+\sqrt 5}{2})^{2a_2}\leq (\frac{3(1+\sqrt 5)}{2\sqrt 5})^{a_1}\Longleftrightarrow 2a_2ln(\frac{1+\sqrt 5}{2})\leq a_1ln(\frac{3(1+\sqrt 5)}{2\sqrt 5})\Longleftrightarrow \frac{a_1}{a_2}\geq \frac{2ln(\frac{1+\sqrt{5}}{2})}{ln(\frac{3(1+\sqrt{5})}{2\sqrt{5}})}\Longleftrightarrow -\frac{a_1}{a_2}\leq -\frac{2ln(\frac{1+\sqrt{5}}{2})}{ln(\frac{3(1+\sqrt{5})}{2\sqrt{5}})}$.

(1) For any $l$, by Proposition \ref{prop-l2} (3), the sequence $r_{l\langle t\rangle}(u_{l\langle t\rangle},v_{l\langle t\rangle}), t\in \mathbb Z_{>0}$ is decreasing. If $k\geq -\frac{lg(1+\sqrt{2})+lg(3\sqrt{2}/4)}{lg(1+\sqrt{2})-lg(3\sqrt{2}/4)}$, we have $(\frac{3}{2\sqrt 2})^{a_1+a_2}(1+\sqrt 2)^{-a_1+a_2}>1$. Thus by proposition \ref{prop-lim2}, we see that $r_{l}(u_{l},v_{l})>1$. Then the result follows by Proposition \ref{prop-mon}.

(2) The proof is similar to the proof of (1).

(3) If $-\frac{2ln(\frac{1+\sqrt{5}}{2})}{ln(\frac{3(1+\sqrt{5})}{2\sqrt{5}})}<k<-\frac{ln\frac{3(2+\sqrt 2)}{4}}{ln\frac{2(2+\sqrt 2)}{3}}$, assume that $k=-\frac{a_1}{a_2}$ with $a_1,a_2\in \mathbb Z_{>0}$ and $g.c.d.(a_1,a_2)=1$, then we have $(\frac{3}{2\sqrt 2})^{a_1+a_2}(1+\sqrt 2)^{-a_1+a_2}<1$ and $(\frac{2\sqrt 5}{3(1+\sqrt 5)})^{a_1}(\frac{1+\sqrt 5}{2})^{2a_2}>1$.

As $(\frac{3}{2\sqrt 2})^{a_1+a_2}(1+\sqrt 2)^{-a_1+a_2}<1$, by Proposition \ref{prop-lim2} (1), there exists $n$ such that $r_{L_n}(u_{L_n},v_{L_n})<1$. By Lemma \ref{lem-r6}, there exists $\sigma(n)$ such that for any line $l$ with slope $k$, we have
 $r_{l\langle t\rangle}(u_{l\langle t\rangle},v_{l\langle t\rangle})< r_{L_n}(u_{L_{n}},v_{L_{n}})<1$ for all $t\in \mathbb Z_{>0}$ such that $l\langle t\rangle$ lies on the right side of $\gamma\langle\sigma(n)\rangle\gamma[\sigma(n)-\frac{\sigma(n)}{k}]$.

As $(\frac{2\sqrt 5}{3(1+\sqrt 5)})^{a_1}(\frac{1+\sqrt 5}{2})^{2a_2}>1$, by Proposition \ref{prop-lim1} (1), there exists $n'$ such that $r_{l_{n'}}(z_{l_{n'}},w_{l_{n'}})>1$. By Lemma \ref{lem-r5}, there exists $\delta(n')\in \mathbb R$ such that for any line $l$ with slope $k$, we have
        $r_{l[t]}(z_{l[t]},w_{l[t]})> r_{l_n}(z_{l_{n}},w_{l_{n}})>1$ for all $t\in \mathbb Z_{>0}$ such that $l[t]$ lies on the right side of $\gamma[\delta(n')]$.

Let $\eta=max\{\sigma(n)-\frac{\sigma(n)}{k}, \delta(n')\}$. Then for any line $l$ with slope $k$ which lies on the right side of $\gamma[\eta]$, we have $r_{l}(u_{l},v_{l})<1$ and $r_{l}(z_{l},w_{l})> 1$. By Proposition \ref{prop-mon}, the generalized Markov numbers neither increase nor decrease with $x$ along $l$.

\end{proof}

In view of \cite[Theorem 1.1]{J} on the monotonicity of the usual Markov numbers and Theorem \ref{thm-mono}, compare with the uniqueness conjecture of Markov numbers, we propose the following uniqueness conjecture for the generalized Markov numbers.

\begin{conjecture}
For any $(q,p),(q',p')\in \mathbb Z_{>0}^2$ with $q\geq p, q'\geq p'$, if $(q,p)\neq (q',p')$ then we have
$$m_{q,p}\neq m_{q',p'}.$$
\end{conjecture}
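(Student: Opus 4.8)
The plan is to leverage the monotonicity dichotomy of Theorem~\ref{thm-mono} to confine any putative coincidence $m_{q,p}=m_{q',p'}$ to a very rigid configuration, and then to attack the single residual case — which I expect to be the genuine obstruction, since it contains the classical Markov Uniqueness Conjecture.

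First I would reduce to a line of critical slope. Suppose $m_{q,p}=m_{q',p'}$ with $(q,p)\neq(q',p')$ and let $\ell$ be the line through these two points. If $\ell$ has nonnegative slope, Proposition~\ref{prop-mon}(1) makes the generalized Markov numbers strictly increase along $\ell$, so two distinct points of $\ell$ cannot share a value; if the slope of $\ell$ lies outside the critical window $(-1.2417,\,-1.1432)$ of Theorem~\ref{thm-mono}, then parts~(1)--(2) of that theorem give strict monotonicity along $\ell$, again precluding a repeated value. Hence the slope of $\ell$ must lie strictly inside the critical window.

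For such an $\ell$, Proposition~\ref{prop-mon}(2) together with Proposition~\ref{prop-l1} shows that the values along $\ell$ strictly decrease to a unique minimal point and then strictly increase; a strictly unimodal sequence realizes each value at most twice, with the two realizing indices necessarily on opposite sides of the minimum. Thus every line contributes at most one candidate collision, straddling its minimum. A complementary reduction comes from the ray structure: by Lemma~\ref{lem-fn}, writing $d=\gcd(q,p)$ and $(q,p)=d(q_0,p_0)$ with $(q_0,p_0)$ coprime, one has $m_{q,p}=f_d$ where $f_1=m_{q_0,p_0}$ and $f_j=3f_1f_{j-1}-f_{j-2}$, so along a fixed primitive ray the values are strictly increasing in $d$ (since $f_j\sim C\alpha^j$ with $\alpha>1$). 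Hence a coincidence reduces to an equality $f_d(m_{q_0,p_0})=f_{d'}(m_{q'_0,p'_0})$ between two such Chebyshev-type sequences based at primitive Markov numbers.

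The hard part, and the main obstacle, is to rule out this residual equality. The tools developed here — the Ptolemy inequality and the asymptotic ratios of Propositions~\ref{prop-lim1}--\ref{prop-lim2} — pin down where a collision could occur but are not sharp enough to separate the two candidate values exactly, precisely because the window is the regime of non-monotonicity. Indeed, specializing to $d=d'=1$ and coprime indices turns the residual equality into the classical Markov Uniqueness Conjecture, which is open; thus the generalized statement is at least as hard. What the present machinery yields unconditionally is therefore a reduction: any counterexample must lie on a strictly unimodal line of slope in the critical window, with its two points straddling the minimum. To close the argument one would need an exact arithmetic invariant recovering $(q,p)$ from $m_{q,p}$ — for example through the Farey/continued-fraction indexing of Aigner or through the snake-graph expansion of the underlying cluster variables — rather than the purely inequality-based estimates used above.
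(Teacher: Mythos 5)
There is a genuine and unfixable gap: the statement you were asked to prove is stated in the paper as a \emph{conjecture} --- the paper offers no proof of it, and your proposal does not supply one either. You concede this yourself in the final paragraph: after your reductions, the ``residual equality'' $f_d(m_{q_0,p_0})=f_{d'}(m_{q'_0,p'_0})$ contains, in the special case $d=d'=1$ with coprime indices, the classical Markov Uniqueness Conjecture of Frobenius, which is open. An argument that ends with ``to close the argument one would need an exact arithmetic invariant'' is a reduction, not a proof; the unproved core is the entire content of the statement. Your diagnosis of \emph{why} the paper's machinery cannot close it is accurate --- the Ptolemy inequality and the ratio asymptotics of Propositions \ref{prop-lim1} and \ref{prop-lim2} are inequality-based and cannot separate two exactly equal values --- but that diagnosis only confirms that no proof has been produced.

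The reduction itself, if you intend to keep it as a partial result, also has repairable holes worth flagging. First, if $q=q'$ the line through the two points is vertical, so it has no rational slope and Theorem \ref{thm-mono} does not apply; you need Corollary \ref{cor-v1} (strict increase in $p$ for fixed $q$) to handle this case. Second, strictness: the paper's notion of ``increase'' along a line (Remark \ref{rem-mo}) is $r_l(x_i,y_i)\geq 1$, which is non-strict, so Theorem \ref{thm-mono}(1)--(2) as stated do not by themselves exclude two \emph{consecutive} points on a monotone line having equal values. To get the strict monotonicity your argument needs, you must go back to Proposition \ref{prop-l1}(1) (for $b\neq 0$ the ratios along $l$ are strictly increasing, so at most one ratio can equal $1$) combined with the limiting values being bounded away from $1$ off the critical window, and to Corollaries \ref{cor-h1} and \ref{cor-v1} for the axis directions. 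With those repairs, what you have is a legitimate structural statement --- any counterexample lies on a unimodal line of slope in the critical window, straddling its minimum, or is an equality between two Chebyshev-type sequences $f_d$ based at primitive points --- but the conjecture itself remains exactly as open as it was.
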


{\bf Acknowledgements:}\; {\em
The author is indebted to Li Li for his interest and valuable conversations.
This project is supported by the National
Natural Science Foundation of China (No.12101617) (No.12071422), Guangdong Basic and Applied Basic Research Foundation 2021A1515012035, and the Fundamental
Research Funds for the Central Universities, Sun Yat-sen University.}

\end{document}